\theoremstyle{plain}
\newtheorem{theorem}{Theorem}[section]
\newtheorem{lemma}[theorem]{Lemma}
\newtheorem{proposition}[theorem]{Proposition}
\newtheorem{corollary}[theorem]{Corollary}
\theoremstyle{definition}
\newtheorem{definition}[theorem]{Definition}
\newtheorem{example}[theorem]{Example}
\newtheorem{assumption}[theorem]{Assumption}
\theoremstyle{definition}
\newtheorem{remark}[theorem]{Remark}
\numberwithin{equation}{section}
\numberwithin{figure}{section}
\newcommand{\bE}{\mathbb{E}}
\newcommand{\bN}{\mathbb{N}}
\newcommand{\bP}{\mathbb{P}}
\newcommand{\bR}{\mathbb{R}}
\def \E {\mathbb{E}}
\newcommand{\cB}{\mathcal{B}}
\newcommand{\cF}{\mathcal{F}}
\newcommand{\cK}{\mathcal{K}}
\newcommand{\caL}{\mathscr{L}}
\newcommand{\cP}{\mathcal{P}}
\renewcommand{\P}{\mathbb{P}}
\def \a {\alpha}
\newcommand{\1}{\mathbbm{1}}
\newcommand{\firstrefereee}[1]{\todo[inline,backgroundcolor=blue!30]{\textbf{To Referee 1: }#1}}
\newcommand{\secondrefereee}[1]{\todo[inline,backgroundcolor=green!30]{\textbf{To Referee 2: }#1}}
\newcommand{\bothreferees}[1]{\todo[inline]{\textbf{To Both Referees: }#1}}
\begin{document}

\begin{frontmatter}

\title{McKean--Vlasov SDEs under Measure Dependent Lyapunov Conditions}
\runtitle{McKean--Vlasov SDEs under Measure Dependent Lyapunov Conditions}

\begin{aug}
  \author{William R.P. Hammersley\corref{1} \thanksref{t1}$^{,1}$\ead[label=e1]{w.r.p.hammersley@sms.ed.ac.uk}},
  \author{David \v{S}i\v{s}ka\corref{2}$^{2}$
  \ead[label=e2]{d.siska@ed.ac.uk}}
  \and
  \author{{\L}ukasz Szpruch \corref{3}$^{3}$
  \ead[label=e3]{l.szpruch@ed.ac.uk}}
  \thankstext{t1}{Supported by The Maxwell Institute Graduate School in Analysis and its Applications, a Centre for Doctoral Training funded by the UK Engineering and Physical Sciences Research Council (grant EP/L016508/01), the Scottish Funding Council, Heriot-Watt University and the University of Edinburgh.}
  
 
 
  \affiliation{School of Mathematics, University of Edinburgh$^{1,2,3}$ and the Alan Turing Institute, London$^3$}
  \address{School of Mathematics, University of Edinburgh,\\ 
  	    James Clerk Maxwell Building, Peter Guthrie Tait Road,\\ Edinburgh EH9 3FD\\
        \printead{e1,e2,e3}}

\runauthor{W.R.P. Hammersley, D. \v{S}i\v{s}ka and {\L}. Szpruch}

\end{aug}

\begin{abstract}
We prove the existence of weak solutions to McKean--Vlasov SDEs defined on a domain $D \subseteq \mathbb{R}^d$ with continuous and unbounded 
coefficients and degenerate diffusion coefficient. 
Using differential calculus for the flow of probability measures due to Lions, we introduce a novel integrated condition for Lyapunov functions in an infinite dimensional space $D\times \cP(D)$, where $\cP(D)$ is a space of probability measures on $D$. Consequently we show existence of solutions to the McKean--Vlasov SDEs on $[0,\infty)$. 
This leads to a probabilistic proof of the existence of a stationary solution to 
the nonlinear Fokker--Planck--Kolmogorov equation under very general conditions. 
Finally, we prove uniqueness under an integrated condition based on a Lyapunov function. 
This extends the standard monotone-type condition for uniqueness.   
\end{abstract}


\begin{keyword}[class=MSC]
	\kwd[Primary ]{60H10}
	\kwd{60K35}
	\kwd[; secondary ]{60K35}
\end{keyword}

\begin{keyword}
	\kwd{Mckean-Vlasov equations}
	\kwd{Mean-Field equations}
	\kwd{Wasserstein calculus}
\end{keyword}

\end{frontmatter}


\section{Introduction}

We will consider either the time interval $I=[0,T]$ for some fixed
$T>0$ or $I=[0,\infty)$.
Let $(\Omega, \mathcal{F}, \mathbb{P})$ be a probability space and
$(\mathcal{F}_t)_{t\in I}$ 
a right continuous filtration such that $\mathcal{F}_0$ contains all sets of $\mathcal{F}$ that
have probability zero.
Let $w=(w_t)_{t\in I}$ be an $\mathbb{R}^{d'}$-valued an $(\mathcal F_t)_{t\in I}$-Wiener process.
We consider the McKean--Vlasov stochastic differential equation (SDE) on an open domain $D\subseteq \mathbb R^d$, 
\begin{equation}
\label{eq mkvsde}	
x_t = x_0 + \int_0^t b(s,x_s,\mathscr{L}(x_s))\,ds 
+ \int_0^t \sigma(s,x_s,\mathscr{L}(x_s))\,dw_s\,,\,\,\, t\in I\,.
\end{equation}
Here we use the notation $\mathscr{L}(x)$ to denote the
law of the random variable $x$.
The law of such an SDE satisfies a nonlinear Fokker--Planck--Kolmogorov equation (see also~\cite{bogachev2016distances} and more generally~\cite{BogachevKrylovRocknerShaposhnikov}): writing $\mu_t := \mathscr L(x_t)$ and $a:= \frac12\sigma \sigma^*$ we have, for $t\in I$,
\begin{equation}
\label{eq fwd kolmogorov}
\langle \mu_t, \varphi \rangle 
= \langle \mu_0, \varphi \rangle 
+ \int_0^t \left \langle \mu_s, b(s,\cdot, \mu_s)\partial_x \varphi 
+ \text{tr}\left(a(s,\cdot,\mu_s) \partial_x^2 \varphi\right) 	\right \rangle \, ds\,\,\,\, \forall \varphi \in C^2_0( D )\,.
\end{equation}

The aim of this article is to study the existence and uniqueness of solutions to 
the equation~\eqref{eq mkvsde}. 
We will show that a weak solution to~\eqref{eq mkvsde} exists for unbounded and
continuous coefficients, provided that we can find an 
appropriate measure-dependent Lyapunov function which ensures integrability of the equation. 
This generalises the results of~\cite{funaki1984certain} and~\cite{gyongy1996existence}.

The work on SDEs with coefficients that depend on the law of the solution was initiated 
by McKean~\cite{McKean66}, who was inspired by Kac's programme in Kinetic Theory~\cite{Kac56}. 
An excellent and thorough account of the general theory of McKean--SDEs and their particle approximations can be found in~\cite{sznitman1991topics}. Sznitman has shown that if the coefficients of \eqref{eq mkvsde} are globally Lipschitz continuous, a fixed point argument on Wasserstein space can be carried out, and consequently a solution to \eqref{eq mkvsde} is obtained as the limit of classical SDEs. 
To extend this result, Funaki~\cite{funaki1984certain} formulated 
a non-linear martingale problem for McKean--Vlasov SDEs that allowed him to establish existence of a solution to~\eqref{eq mkvsde} by studying a limiting law of  Euler discretisation. His proof of existence holds for continuous coefficients satisfying a Lyapunov type condition in the state variable $x\in \bR^d$ with polynomial Lyapunov functions.  Whilst we also assume continuity of the coefficients, we allow for a much more general Lyapunov condition that depends on a measure. Furthermore, Funaki is using Lyapunov functions to establish integrability of the Euler scheme which is problematic if one wants to depart from polynomial functions, see \cite{SzpruchZhang18}. G\"artner \cite{Gartner}, uses an integrated Lyapunov condition with a Lyapunov function not dependent on measure, to study the weak well-posedness of McKean--Vlasov SDEs.
\firstrefereee{established changed to establish}

An alternative approach to establishing existence of solutions to McKean--Vlasov
equations is to approximate the equation with a particle system 
(a system of classical SDEs that interact with each other through empirical measure) and show that the limiting law solves the martingale problem.
In this approach, one works with laws of empirical laws i.e. on the space of probability measures on the space of probability measures on $D$ - denoted
$\cP(\cP(D))$ - and proves their convergence to a (weak) solution of~\eqref{eq mkvsde} by studying the corresponding non-linear martingale problem. 
We refer to~\cite{meleard1996asymptotic} for a general overview and to~\cite{bossy2011conditional,FournierJourdain17} and references within for recent results exploring this method. A general approach to establish the existence of martingale solutions has also been presented in  \cite{MR3609379}.
We also refer the reader to interesting new developments on existence and uniqueness of solutions for McKean-Vlasov equations with non-smooth coefficients to \cite{mishura2016existence,de2015strong}.
Here, inspired by~\cite{mishura2016existence}, 
we tackle the problem using the Skorokhod representation theorem and convergence
lemma~\cite{skorokhod1965}.
\bothreferees{Rewrote sentence beginning with 'We refer to' in the above paragraph}
\firstrefereee{Full stop added in l-3 of paragraph}
\secondrefereee{description of the space $\cP(\cP(D))$ added.}

For classical SDEs (equations with no dependence on the law), the lack of sufficient regularity of the coefficients, say Lipschitz continuity, proves to be the main challenge in establishing existence and uniqueness of solutions. Lack of boundedness of the coefficients, typically, does not lead to significant difficulty, provided these are at least locally bounded. In that case one can work with local solutions and the only concern is the possible explosion. The conditions that ensure that the solution does not explode can be formulated by using Lyapunov function techniques as has been pioneered in \cite{Khasminskii80}. The key observation is that if one considers two SDEs with coefficients  that agree on some bounded domain then the solutions, if unique, also agree until first time the solution leaves the domain, see, for example~\cite[Ch. 10]{StroockVaradhan2006}.

This classical localisation procedure does not carry over, at least directly, from the setting of classical SDEs to McKean--Vlasov SDEs. 
Indeed, if we stop a classical SDE then until the stopping time the stopped
process satisfies the same equation. 
If we take~\eqref{eq mkvsde} and consider the stopped process 
$y_t := x_{t\wedge \tau}$, with some stopping time $\tau$, then the equation this satisfies is 
\firstrefereee{'its satisfies is' changed to 'this process satisfies is'}
\[
y_t = y_0 + \int_0^{t\wedge \tau} b(s,y_s,\mathscr{L}(x_s))\,ds 
+ \int_0^{t\wedge \tau} \sigma(s,y_s,\mathscr{L}(x_s))\,dw_s\,,\,\,\, t\in I\,.
\]
Clearly, even for $t\leq \tau$ this is not the same equation 
since $\mathscr L(x_s) \neq \mathscr L(y_s)$. Furthermore, this is not a McKean--Vlasov SDE. This could be problematic if one would like to obtain a solution to McKean--Vlasov SDEs through a limiting procedure of stopped processes. Furthermore, let $D_k\subseteq D_{k+1}$ be a sequence of nested domains,  and consider functions $\bar b$ and $\bar \sigma$ such that $\bar b = b $ and $\bar \sigma = \sigma$ on $D_k$. The equation
\begin{equation*}	
\bar x_t =\bar x_0 + \int_0^t \bar b(s,\bar x_s,\mathscr{L}(\bar x_s))\,ds 
+ \int_0^t \bar \sigma(s,\bar x_s,\mathscr{L}(\bar x_s))\,dw_s\,,\,\,\, t\in I\,,
\end{equation*}
is a McKean--Vlasov SDE, but in general $x_t \neq \bar x_t$ even for $t\leq \bar \tau^k$, where $\bar \tau^k = \inf\{t \geq 0: \bar x_t \notin D_k \}$. This implies that if one considers a sequence of SDEs with coefficients that agree on these subdomains, one no longer has monotonicity for the corresponding stopping times.  
We show that despite these difficulties it still possible to establish the existence of weak solutions to the McKean--Vlasov SDEs \eqref{eq mkvsde} using the idea of localisation, but extra care is needed. 
\secondrefereee{Caveat 'in general added to $x_t\neq \bar{x}_t$}

\subsection{Main Contributions}

Our first main contribution is the generalisation of Lyapunov function techniques to the setting of McKean--Vlasov SDEs. 
The coefficients of the equation~\eqref{eq mkvsde} depend on  $(x,\mu)\in D \times \cP(D)$ for $D\subseteq \mathbb{R}^d$.  
Hence the class of Lyapunov functions considered in this paper also depend on $(x,\mu)\in D \times \cP(D)$. 
See~\eqref{a-nonint}.
Furthermore, it is natural to formulate the {\em integrated Lyapunov} condition,
in which the key stability assumption is required to hold only on $\mathcal P(D) $, see~\eqref{a-int} and Section~\ref{sec:motivating examples} for motivating examples.
Note that it is not immediately clear how one can obtain tightness estimates
for the particle approximation under the integrated conditions we propose.
To work with Lyapunov functions on $\mathcal P(D)$, we take advantage of the recently developed analysis on Wasserstein spaces, and in particular derivatives with respect to a measure
as introduced by Lions in his lectures at College de France,
see~\cite{cardaliaguet2012} and~\cite[Ch. 5]{carmona2017probabilistic}. This analysis is presented in the appendix to give the measure derivative in a domain. 
\firstrefereee{Comma removed in second sentence}
Our second main contribution is the probabilistic proof of the existence of a stationary solution to the nonlinear Fokker--Planck--Kolmogorov equation \eqref{eq fwd kolmogorov}. 
Furthermore, the calculus on Wasserstein spaces allows one to study a Fokker--Planck--Kolmogorov-type equation on $\mathcal P_2(D)$. 
Indeed, writing $\mu_t:=\mathscr L(x_t)$ we have, for $\phi\in  \mathcal C^{(1,1)}(\mathcal P_2(D))$, see Definition~\ref{def a5}, and $t\in I$ that
\begin{equation} \label{eq fwd measure}
\begin{split}
\phi(\mu_t)=\phi(\mu_0) \, +  
\int_0^t \big\langle \mu_s,  b(s,\cdot,\mu_s) \partial_\mu \phi(\mu_s) 
+ \text{tr}\left[a(s,\cdot,\mu_s) \partial_y \partial_\mu \phi(\mu_s)\right]\big\rangle\,ds.
\end{split}
\end{equation}
Following the remark by Lions from his lectures at College de France, the equation~\eqref{eq fwd measure} can be interpreted as non-local transport equation on the space of measures. 
The reader may consult~\cite[Ch. 5 Sec. 7.4]{carmona2017probabilistic} for further details. 
Another interesting angle is that whilst~\eqref{eq fwd kolmogorov} gives
an equation for linear functionals of the measure, 
equation~\eqref{eq fwd measure} is an equation for nonlinear functionals of the measure.
The existence results obtained in this paper imply existence of a stationary solution to \eqref{eq fwd measure} in the case where $b$ and $\sigma$ do not
depend on time.

Finally, we formulate uniqueness results under the Lyapunov type condition and the {\em integrated Lyapunov type condition} that is required to hold only on $\mathcal P(D) $.
This extends the standard monotone type conditions studied in literature e.g~\cite{MR2860672,wangDDSDELandau2018,MR3226169,DosReis2017LDP}. Interestingly, in some special cases we are able to obtain uniqueness only under local monotone conditions. Again, we do not require a non-degeneracy condition on the diffusion coefficient. We support our results with the example inspired by Scheutzow~\cite{Scheutzow87} who has shown that, in general, uniqueness of solution to McKean--Vlasov SDEs does not hold if the coefficients are only locally Lipschitz. Again, we would like to highlight that since the standard localisation techniques used for classical SDEs seem not to work in our setting, we cannot simply obtain global uniqueness results from local uniqueness and suitable estimates on the stopping times.   

\subsection{Motivating Examples}
\label{sec:motivating examples}

Let us now present some examples to motivate the choice of the Lyapunov condition.
Consider first the McKean--Vlasov stochastic differential equation
\begin{equation} 
\label{eq example1}
d x_t  = -x_t \bigg[\int_\bR y^4 \mathscr L(x_t)(dy)\bigg]\, dt + \frac{1}{\sqrt{2}}x_t\,  d w_t\,,\,\,\,
x_0 \in L^4(\mathcal F_0, \mathbb R^+)\,.
\end{equation}
The diffusion generator for~\eqref{eq example1} is
\begin{equation}
\label{eq ex diff gen}
L(x,\mu) v(x) := \frac{1}{4}x^2  v''(x) - x \bigg[ \int_{\mathbb R} y^4 \, \mu(dy) \bigg]  v'(x)\,.	
\end{equation}
It is not clear whether one can find a Lyapunov function such that the classical Lyapunov condition holds i.e. $L(x,\mu)v(x)\leq  m_1 v(x)+  m_2$, for $m_1<0$ and $m_2\in \bR$. However, with the Lyapunov function given by $v(x) = x^4$ we can establish that
\bothreferees{$m_1v(x)m_2$ changed to $m_1v(x)+m_2$} 
\begin{equation}
\label{eq lyapunov example}
\int_{\bR}L(x,\mu) v(x)\mu(dx) 
\leq -\int_\bR v(x) \mu(dx) + 1.
\end{equation}
\bothreferees{from the observation of the second reviewer we changed $+4$ to $+1$. }
See Example~\ref{example integrated lyapunov} for details.
We will see that this is sufficient to establish integrability of~\eqref{eq example1} on $I=[0,\infty)$. 
See Theorem~\ref{thm:weakexistence} and the condition~\eqref{eq b1}.

Another way to proceed, is to directly work with $v(\mu):=\int_\bR x^4 \, \mu(dx)$
as Lyapunov function on the measure space $\cP_4(\bR)$. 
This requires the use of derivatives with respect to a measure
as introduced by Lions in his lectures at College de France,
see~\cite{cardaliaguet2012} or Appendix~\ref{sec measure derivatives and ito}. We note that derivatives with respect to a measure are defined in $\cP_2(\bR)$, and therefore one cannot apply It\^{o} formula for arbitrary measures in $\cP(D)$. However, in this paper we will only apply the It\^{o} formula for measures supported on compact subsets of $\mathbb R^d$ and hence, measures in $\cP_2(\bR)$.
Then 
\[
\partial_\mu v(\mu)(y) = 4 y^3, \quad \partial_y\partial_\mu v(\mu)(y) = 12 y^2, \,\, y \in \bR\,. 
\]
The generator corresponding to the appropriate It\^o formula, see e.g. Proposition~\ref{propn ito for meas only},
is
\begin{align*}
 L^{\mu} v(\mu)
  := & \int_\bR \left( -x \int_\bR y^4\, \mu(dy) \partial_\mu v(\mu)(x)  +\frac{1}{4}x^2  \partial_y \partial_\mu v(\mu)(x)   \right) \mu(d x) \\
= &  \int_\bR \left( - 4 x^4   \int_\bR y^4 \mu(dy) + 3x^{4}   \right) \mu(d x)\,. 
\end{align*}
\bothreferees{$x^4$ changed to $x^2$}
We note that this yields the same expression as found when $v(x) = x^4$ 
in~\eqref{eq ex diff gen} after we integrate over $\mu$ (and so~\eqref{eq lyapunov example} again holds). 
In this case using the It\^o formula for measure derivatives brings no advantages. 
However, the advantage of working with a Lyapunov function on the measure space appears when the dependence on the measure in the Lyapunov function is not linear.

Consider the following McKean--Vlasov stochastic differential equation
\begin{equation}
\label{eq example2}
dx_t = - \left( \int_\bR (x_t - \a y)\caL{(x_t)}(dy) \right)^3\, dt + \left( \int_\bR (x_t - \a y)\caL{(x_t)}(dy) \right)^2 \sigma \, dw_t\,, 	
\end{equation}
for $t\in I$, $\alpha$ and $\sigma$ constants and with $x_0 \in L^4(\cF_0,\mathbb R)$.
Assume that $m:= -(6\sigma^2 - 4 + 4\alpha) >0$.
Since the drift and diffusion are non-linear functions of the law and state of the process, it is natural to seek a Lyapunov function 
$v \in \mathcal C^{2,(1,1)}(\bR \times \mathcal{P}(\bR))$. 
See Definition~\ref{def c122}.
The generator corresponding to the appropriate It\^o formula, see e.g. Proposition~\ref{propn ito formula full},
is then given by~\eqref{eq Lmu} and we will show that 
for the Lyapunov function
\[
v(x,\mu) = \left( \int_{\bR} (x - \a y)\mu(dy)  \right)^4\,,
\]  
we have 
\[
\int_{\bR}(L^\mu v)(x,\mu)\,\mu(dx) \leq m  - m  \int_{\bR}v(x,\mu)\,\mu(dx)\,. 
\]
See Example~\ref{example 2} for details.
Thus the condition~\eqref{eq b1} holds.
This is sufficient to establish existence of solutions to~\eqref{eq example1} on $I=[0,\infty)$ as Theorem~\ref{thm:weakexistence} will tell us. 

Regarding our continuity assumptions for existence of solutions to~\eqref{eq mkvsde} we note that we only require a type of joint continuity of the coefficients in $(x,\mu) \in \mathbb R^d \times \mathcal P(\mathbb R^d)$
and that this allows us to consider coefficients where the dependence
on the measure does not arise via an integral with respect to said measure.
This could be for example 
\[
S_\alpha(\mu):=\frac{1}{\alpha}\int_0^\alpha \inf\{x\in \bR\,:\, \mu((-\infty, x]) \geq s]\}\,ds\,,
\]
for $\alpha > 0$ fixed.
This quantity is known as the ``expected shortfall'' and is a type of risk measure. See Example~\ref{example 3} for details. 
These motivating examples also satisfy the Lyapunov type estimates, appearing in Section~\ref{sec uniq}, 
ensuring uniqueness of solutions.

\section{Existence Results}

For a domain $D\subseteq \mathbb R^d$, we will use the notation $\mathcal{P}(D)$ for the space
of probability measures over $(  D,\mathcal{B}(D))$. 
We will consider this as a topological space with the topology induced
by the weak convergence of probability measures.
We will write $\mu_n \Rightarrow \mu$ if $(\mu_n)_n$ converges to $\mu$ 
in the sense of weak convergence of probability measures. 
For $p\geq 1$ we use $\mathcal{P}_p(D)$ to denote the set of probability 
measures on $D$ with finite $p^{th}$ moment (i.e. $\int_D |x|^p \mu(dx) < \infty$ for $\mu \in \mathcal P_p(D)$).
We will consider this as a metric space with the metric given by the $p^{th}$ Wasserstein distance, see~\eqref{eq p-wasserstein}.
Denote by $C_b(D)$ and $C_0(D)$ the subspaces of continuous functions that are bounded and compactly supported, respectively.
 \firstrefereee{Changed 'set of probability measures that are p integrable' to 'set of probability measures on $D$ with finite $p^{th}$ moment'. Also changed 'Wasserstein distance with exponent $p$' to '$p^{th}$ Wasserstein distance'. }

We use $\sigma^*$ to denote the transpose of a matrix $\sigma$ 
and for a square matrix $a$ we use $\text{tr}(a)$ to denote its trace.
We use 
$\partial_x v$ to denote the (column) vector of first order partial derivatives of $v$ with respect to the components of $x$ (i.e. the gradient of $v$ with respect to $x$) 
and $\partial_x^2 v$ to denote the square matrix of all the mixed second order partial derivatives with respect to the components of $x$ (i.e. the Hessian matrix of $v$ with respect to $x$). 
If $a,b \in \mathbb R^d$ then $ab$ denotes their dot product.

Recall that we are using the concept of derivatives with respect to a measure
as introduced by Lions in his lectures at Coll${\grave{\textnormal e}}$ge  de France,
see~\cite{cardaliaguet2012}. 
For convenience, the construction and main definitions are 
in Appendix~\ref{sec measure derivatives and ito}.
In particular, see Definition~\ref{def c122} to clarify what is meant by
the space
$\mathcal C^{1,2,(1,1)}(I \times D \times \mathcal{P}(D))$.
In short, saying that a function $v$ is in such space means that all the derivatives appearing in~\eqref{eq Lmu}
exist and are appropriately jointly continuous so that we may apply 
the It\^o formula for a function of a process and a flow of measures, 
see Proposition~\ref{propn ito formula full}.
The use of such an It\^o formula naturally leads to the following form of a diffusion generator. First, we note that throughout this paper we assume that for the domain $D\subseteq{\mathbb R^d}$ there is a nested sequence of bounded sub-domains. By this we mean a sequence of bounded open subsets of $\mathbb R^d$, $(D_k)_k$ such that $\bigcup_k D_k = D$ and $\overline{D}_k\subset D_{k+1}$ for all $k$, i.e. $d( D_k,\partial D_{k+1}):=\inf_{x\in D_k,y\in \partial D_{k+1}}|x-y|>0$ for all $k\in \bN$.
\bothreferees{We have corrected the assumptions on $D_k$ to ensure that the limiting argument for the stopping times introduced later holds.}
For $(t,x)\in I \times D$, $\mu \in \mathcal P( D_k)$ for some $k\in \mathbb N$ and
for some 
$v \in \mathcal C^{1,2,(1,1)}(I \times D \times \mathcal{P}_2(D))$  
we define the diffusion generator $L^{\mu} = L^\mu(t,x,\mu)$ as
\secondrefereee{$L^\mu$ is emphasized to be the diffusion generator here in response to the 16th point raised in your report}
\begin{equation}
\label{eq Lmu}
\begin{split}
(L^\mu v)(t,x,\mu) & :=  \bigg (\partial_t v 
+ \frac{1}{2}\text{tr}\big (\sigma \sigma^*\partial_x^2  v\big ) 
+ b \partial_x v\bigg )(t, x,\mu) \\
& + \int_{\bR^d}\left( b (t, y, \mu) (\partial_\mu v) (t,x,\mu)(y)  
+ \frac{1}{2}\text{tr}\big ((\sigma\sigma^*)(t,y,\mu)(\partial_y \partial_\mu v)(t,x,\mu)(y) \big )   \right) \mu(dy).	
\end{split}
\end{equation}
We note that in the case $v \in C^{1,2}(I \times D )$, i.e when $v$ does not depend on the measure, the above generator reduces to 
\begin{equation*}
(L^\mu v)(t,x) = (Lv)(t,x) :=  \bigg (\partial_t v 
+ \frac{1}{2}\text{tr}\big (\sigma \sigma^*\partial_x^2  v\big ) 
+ b \partial_x v\bigg )(t,x) \,.
\end{equation*}

\subsection{Assumptions and Main Result}
We assume that $b:I\times D \times \mathcal{P}(D) \to \mathbb{R}^d$ and 
$\sigma:I\times D \times \mathcal{P}(D)\to \mathbb{R}^d\times \mathbb{R}^{d'}$
are measurable (later we will add joint continuity and local boundedness assumptions). We require the existence of a Lyapunov function satisfying one of the following 
conditions:
 
\begin{assumption}[Lyapunov Condition]
\label{a-nonint}
There is $v \in \mathcal C^{1,2,(1,1)}(I \times D \times \mathcal{P}_2(D))$, $v\geq 0$, and locally integrable, non-random functions $m_1= m_1(t)$ and $m_2=m_2(t)$ on $I$ such that for any $k\in \bN$, for all $t \in I$, $x\in D_k$ and $\mu \in \mathcal P(D_k)$, we have,	
\begin{equation} 
\label{eq b2}
L^\mu(t,x,\mu) v(t,x,\mu) \leq m_1(t) v(t,x,\mu) + m_2(t).
\end{equation}
\begin{enumerate}[{2.1}a)]
 
\item We say that Lyapunov condition \ref{a-nonint}a holds if \eqref{eq b2} holds and there is a non-negative function $V =  V(t,x)$ such that for any $k\in\bN$, for all $t\in I$, $x\in D_k$ and all $\mu \in \mathcal P(D_k)$, we have, 
\begin{equation}
\label{eq V ineq1}
 V(t,x)\,\leq v(t,x,\mu)\,\  
\end{equation}
and 
\begin{equation}
\label{eq Vk limit}
V_k := \inf_{s \in I, x\in \partial D_k } V(s,x) \,\,\, \text{$\to \infty$ as $k\to \infty$.}	
\end{equation}
\item We say that Lyapunov condition \ref{a-nonint}b holds if \eqref{eq b2} holds and there exists a non-negative function $V$ such that for any $k\in \bN$, for all $t\in I$ and $\mu\in \mathcal P(D_k)$, we have,
\begin{equation}
\label{eq V ineq}
\int_{D_k}  V(t,x)\,\mu(dx) \leq \int_{D_k} v(t,x,\mu)\,\mu(dx) 
\end{equation}
and
\begin{equation}
\label{eq Vk D limit}
V^c_k := \inf_{s \in I, x\in  D_k^c } V(s,x) \,\,\, \text{$\to \infty$ as $k\to \infty$.}	
\end{equation} 
\end{enumerate}
\end{assumption}
\begin{assumption}[Integrated Lyapunov condition]
\label{a-int}
There is a $v \in \mathcal C^{1,2,(1,1)}(I \times D \times \mathcal{P}_2(D))$, $v\geq 0$, such 
that: \\
i) There are locally integrable, non-random, functions $m_1= m_1(t)$ and $m_2=m_2(t)$ on $I$ such that for any $k\in \bN$, for all $t \in I$ and $\mu \in \mathcal P (D_k)$, we have, 
\begin{equation}\label{eq b1}
\int_{D_k} L^\mu (t,x,\mu) v(t,x,\mu) \mu (dx) \leq m_1 (t) \int_{D_k} v(t,x,\mu) \mu (dx) + m_2(t)
\end{equation}
ii) There is a non negative function $V =  V(t,x)$ satisfying \eqref{eq V ineq} and \eqref{eq Vk D limit}.
\end{assumption}

\begin{assumption}[Initial Distribution]\label{ass init}
We assume that for a given Lyapunov function $v$, the initial distribution $\mu_0:=\mathscr L(x_0)$ is such that $\mu_0$ can be approximated by a sequence of probability distributions $(\mu^k_0)_{k}$ such that $\mu^k_0\implies \mu_0$ and for each $k\in\bN $, $\mu^k_0$ is supported on $D_k$ and for some increasing continuous function $\varphi_v:[0,\infty)\rightarrow[0,\infty)$ such that $\varphi_v(x)\geq x$ for all $x\in [0,\infty)$ we have, $$\langle \mu^k_0,v(0,\cdot,\mu^k_0)\rangle \leq \varphi_v(\langle \mu_0,v(0,\cdot,\mu_0)\rangle)<\infty.$$
\end{assumption}

\begin{remark} \label{rmk assumption}
\hfill{  }
\begin{enumerate}[i)]
\item We have deliberately not specified the signs of the functions $m_1$ and $m_2$. 	
 
\item Note that if $\mu_0$ is supported on some $D_K$ for any $K\in \mathbb N$, then Assumption \ref{ass init} is satisfied after relabelling the sequence $(D_k)$ to start from $D_K$ and setting $\mu^k_0=\mu_0$.
\item Regarding Assumption \ref{ass init}, it would be preferable to be able to prescribe an approximating sequence $\mu^k_0$. It is easy to imagine however, how this condition should look in the case where $D=\mathbb R$ and $v(x,\mu):=x^2$. One simply truncates the measure $\mu_0$ on $D_k:=(-k,k)$ and puts the mass of the measure $\mu_0$ outside $D_k$ at the origin i.e. $\mu^k_0(dx):=\1_{x\in D_k}\mu_0(dx)+\mu_0(D_k^c)\delta_{0}$. The increasing continuous function $\varphi_v$ in assumption \ref{ass init} facilitates the finding of such a Lyapunov function. The fact that searching for a Lyapunov function for a McKean-Vlasov SDE should also depend upon the initial distribution and not just the form of the coefficients should not be too surprising given the dependence of the coefficients on the law of the solution. Also, in \cite{bogachevRocknerShaposhnikovStationary2019}, Example 1.1 shows that the existence and convergence to a stationary distribution of the non-linear Fokker Planck equation depends not only on the form of the measure dependence of the coefficients, but on the initial condition. 
\end{enumerate}
\end{remark}

Regarding the continuity of coefficients in~\eqref{eq mkvsde} and their
local boundedness we require the following.
\begin{assumption}[$v$-Continuity]
\label{ass continuity}
Functions $b:I\times D \times \mathcal{P}(D) \to \mathbb{R}^d$ and 
$\sigma:I\times D \times \mathcal{P}(D)\to \mathbb{R}^d\times \mathbb{R}^{d'}$
are jointly continuous in the last two arguments in the following sense:
if $(\mu_n)_n \subset \mathcal P(D)$ are such that
\bothreferees{Supremum in $n$ has been added.}
\[
\sup_n \sup_{t \in I} \int_{ D} v(t,x,\mu_n)\,\mu_n(dx) < \infty
\]
and if $(x_n \rightarrow x, \mu_n \Rightarrow \mu )$ as $n\to \infty$
then for any $t\in I$, $b (t,x_n,\mu_n) \to b(t,x,\mu)$ and $\sigma (t,x_n,\mu_n) \to \sigma(t,x,\mu)$ 
as $n\to \infty$.
\end{assumption}
\secondrefereee{for any $t\in I$ has been added in above assumption}
\begin{assumption}[Local $v$-Boundedness]
\label{ass local boundedness}
There exist constants $c_k \geq 0$ such that for any  $\mu \in \mathcal P(D)$ 
\[
\sup_{x\in D_k} |b(t,x,\mu)| 
\leq c_k \left(1+\int_{D} v(t,y,\mu)\mu(dy)\right)\,,
\]
\[
\sup_{x\in D_k} |\sigma(t,x,\mu)| 
\leq c_k \left(1+\int_{D} v(t,y,\mu)\mu(dy)\right)\,.
\]
\end{assumption}
\begin{assumption}[Integrated $v$-Growth]
\label{ass integrated growth}
There exists an increasing function $\varphi_c$ from $[0,\infty)$ to $[0,\infty)$ such that  for all $\mu \in \mathcal P(D)$, we have,	 	
\[
\int_{D} |b(t,x,\mu)| + |\sigma(t,x,\mu)|^2 \mu(dx) \leq \varphi_c
\left(\int_{D} v(t,x,\mu)\mu(dx)\right),\,\,\,\, \forall t\in I.
\]
\end{assumption}
\bothreferees{Changed in 2 ways, removed the mention of $D_k$ similarly to above and the form of the bound was far from optimal, and has been changed accordingly.}

\color{black}
 
Assumption~\ref{ass continuity} of $v$-continuity in the measure argument is very weak, but may in practice be hard to verify. 
In the case of unbounded domains, the property \eqref{eq V ineq} 
 will often hold for functions of the form $V(x)=|x|^p$, $p \geq 1$. 
In this situation, we have $\mu_n \in \mathcal P_p(D)$ for all the measures $\mu_n$ under consideration for convergence of the coefficients with a uniform bound on their $p^{th}$ moments.
But from~\cite[Theorem 6.9]{villani2009}, we know that for $\mu_n \in \mathcal P_p(D)$ with uniform bound on the $p^{th}$ moments, weak convergence of measures is equivalent to convergence in the $p^{th}$ Wasserstein distance. 
Hence, in such case, it is enough to check that 
if $x_n \rightarrow x$ and  $W_{p}(\mu_n,\mu) \to 0$  as $n\to \infty$
then $b (x_n,\mu_n) \to b(x,\mu)$ and $\sigma (x_n,\mu_n) \to \sigma(x,\mu)$ 
as $n\to \infty$.
This will be satisfied in particular if 
\firstrefereee{Second sentence has been rewritten}
\secondrefereee{With the additional mention of the uniform bound of $p^{th}$ moments, mention of $p'$ can be removed, and has been.}
\begin{equation}\notag
| b(x_n,\mu_n) - b(x,\mu) | + | \sigma(x_n,\mu_n) - \sigma(x,\mu) | \leq \rho(|x-x_n|) +  W_{p}(\mu_n,\mu),
\end{equation}
for some function $\rho = \rho(x)$ such that $\rho(|x|) \to 0$ as $x \to 0$.
We note that this is a common assumption, see e.g.~\cite{funaki1984certain}. 
At this point it may be worth noting that the $p^{th}$-Wasserstein distance on $\mathcal P_p(D)$ is 
\begin{equation}
\label{eq p-wasserstein}
W_p(\mu,\nu) := \left( \inf_{\pi \in \Pi(\mu,\nu)}  \int_{D\times D} |x-y|^p \, \pi(dx,dy)	\right)^{\frac{1}{p}}\,,
\end{equation}
where $\Pi(\mu,\nu)$ denotes the set of {\em couplings} between $\mu$ and $\nu$ i.e. all measures on $\mathscr{B}(D\times D)$ 
such that $\pi(B,D) = \mu(B)$ and $\pi(D, B) = \nu(B)$ 
for every $B \in \mathscr B(D)$.

Note that in the case of McKean--Vlasov SDEs it is often useful to
think of the solution as a pair consisting of the process $x$ and its law
i.e. $(x_t,\caL(x_t))_{t \in I}$.
The coefficients of the McKean--Vlasov SDE depend on the law of the solution 
and the main focus of this paper is on equations with unbounded coefficients, therefore
a condition on integrability of the law is natural. 

\begin{definition}[$v$-Integrable Weak Solution]
\label{def soln}
A $v$-integrable weak solution to~\eqref{eq mkvsde}, on $I$ in $D$ is
\[
\big(\Omega, \mathcal F, \mathbb P, 
(\mathcal F_t)_{t \in I}, (w_t)_{t \in I}, (x_t)_{t \in I}\big),
\]
where $(\Omega, \mathcal F, \mathbb P)$ is a probability space,
$(\mathcal F_t)_{t \in I}$ is a filtration, $(w_t)_{t \in I}$ 
is an $(\mathcal F_t)_{t\in I}$-Wiener process, $(x_t)_{t \in I}$ is an adapted process 
satisfying~\eqref{eq mkvsde} such that $x\in C(I; D)$ a.s.
and finally, for all $t\in I$ we have $\mathbb E v(t,x_t,\mathscr L(x_t)) < \infty$.
\end{definition}

Before we state the main theorem of this paper, we state the conditions on $m_1,m_2$ that allow one to establish the integrability and tightness estimate, which in the case $I=[0,\infty)$ needs to be uniform in time.

\firstrefereee{Remark has been moved to make the definitions of $M$ and $M^+$ part of the main text.  }
Define $\gamma(t) :=  \exp\left(-\int_0^t m_1(s)\,ds\right)$ and
\begin{equation}
\begin{split}
\label{eq: uniform tauk est}
M(t):= &   \frac{\varphi_v(\langle \mu_0,v(0,\cdot,\mu_0)\rangle)}{\gamma(t)} +  \int_{0}^t \frac{\gamma(s)}{\gamma(t)} m_2(s) ds,\\
M^+(t):= & e^{\int_0^t (m_1(s))^+\,ds} \left(\varphi_v(\langle \mu_0,v(0,\cdot,\mu_0)\rangle) +  \int_0^{t } \gamma(s) m_2^+(s)  \,  ds\right)\, .
\end{split}
\end{equation}
\bothreferees{The definitions of $M$ and $M^+$ have been changed to reflect the changes to the initial condition}
Note that $M(t)\leq M^+(t)$. 
\begin{remark}[Conditions on $m_1$ and $m_2$ Ensuring Finiteness of $M^+$]
\label{remark on Mt}
\begin{enumerate}[i)]
	\item If $I=[0,T]$,   $m_1$ and $m_2$ are set to $0$ outside $I$, leading 
to 
\[
\sup_{t<\infty}  \int_{0}^t \frac{\gamma(s)}{\gamma(t)} m_2(s) ds \leq \int_0^{T} e^{\int_s^{T} m_1(r)\,dr} |m_2(s)|ds < \infty\,.
\]
\item 
If $I=[0,\infty)$ and we have 
\begin{equation}
\label{eq extra condition on m1m2}
m_1(t) \leq 0 \,\,\,\forall t\geq0  \,\,\, \text{and}\,\,\, \int_0^\infty  |m_2(s)|\,ds<\infty\,,
\end{equation}
then
\[
 \sup_{t<\infty} \int_0^t e^{\int_s^{t} m_1(r)\,dr} m_2(s)\,ds \leq \int_0^\infty |m_2(s)| \, ds < \infty.
\]
\secondrefereee{$\gamma(s)$ removed from the above integrands for point $ii)$ as requested.}
\end{enumerate}
In both of these cases we have $\sup_{t\in I}M(t)<\infty$ and $\sup_{t\in I}M^+(t) <\infty$. 
\end{remark}

\begin{theorem}\label{thm:weakexistence}
Let $D\subseteq  \mathbb R^d$ and Assumptions~\ref{ass init}, \ref{ass continuity} and~\ref{ass local boundedness} hold. Then the following statements are true:
\begin{enumerate}[i)]
\item If Assumption~\ref{a-nonint}a holds and $\sup_{t\in I } M^+(t) < \infty$, then there exists a $v$-integrable weak solution to~\eqref{eq mkvsde} on $I$.
\item Let either Assumption~\ref{a-nonint}b or Assumption~\ref{a-int} hold. If additionally Assumption~\ref{ass integrated growth} holds and $\sup_{t\in I } M(t) < \infty$, then there exists a $v$-integrable weak solution to~\eqref{eq mkvsde} on $I$.
%
\end{enumerate}
In all of the above cases we also have, 
\[
\sup_{t\in I} \mathbb E v(t, x_t, \mathscr L( x_t)) < \infty\,.
\]
\end{theorem}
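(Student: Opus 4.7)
The plan is to establish existence by a localization-and-compactness strategy: approximate~\eqref{eq mkvsde} by McKean--Vlasov equations associated to the bounded sub-domains $D_k$, for which existence follows from previously established results such as those of~\cite{funaki1984certain}; derive uniform-in-$k$ estimates from the Lyapunov condition via the It\^o formula on the Wasserstein space (Proposition~\ref{propn ito formula full}); obtain tightness in a suitable path space; and pass to the limit using the Skorokhod representation theorem together with Assumption~\ref{ass continuity} and the convergence lemma of~\cite{skorokhod1965}.

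More concretely, for each $k\in\bN$ I would introduce coefficients $b^k,\sigma^k$ which coincide with $b,\sigma$ on $I\times D_k\times\mathcal P(D_k)$ and are bounded and continuous elsewhere, built via a smooth cutoff supported in $D_{k+1}$ together with a push-forward of the measure argument onto $\mathcal P(\overline{D_k})$. This truncated equation admits a weak solution $(x^k,w^k)$ by standard theory for bounded continuous coefficients. The construction can be arranged so that $x^k$ is forced to stay in $\overline{D_k}$ (e.g.\ via an inward-pointing correction near $\partial D_k$ or by stopping), so that $\mathscr L(x^k_t)\in\mathcal P(D_k)$, which is precisely the hypothesis needed for the integrated inequality~\eqref{eq b1}. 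Applying the It\^o formula to $v(t,x^k_t,\mathscr L(x^k_t))$ gives
\[
\mathbb E v(t,x^k_t,\mathscr L(x^k_t)) = \mathbb E v(0,x_0,\mathscr L(x_0)) + \int_0^t \mathbb E\bigl[(L^{\mu,k} v)(s,x^k_s,\mathscr L(x^k_s))\bigr]\,ds\,.
\]
Under Assumption~\ref{a-nonint}, Remark~\ref{rmk assumption}(ii) yields the pointwise bound $L^{\mu,k}v\leq m_1v+m_2$, and Gronwall produces $\mathbb E v(t,x^k_t,\mathscr L(x^k_t))\leq M^+(t)$. Under Assumption~\ref{a-int}, since $\mathscr L(x^k_t)\in\mathcal P(D_k)$, inequality~\eqref{eq b1} applies directly and Gronwall yields the bound $M(t)$. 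The hypothesis $\sup_{t\in I}M^+(t)<\infty$ (resp.\ $\sup_{t\in I}M(t)<\infty$) makes this bound uniform in both $k$ and $t$.

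For tightness, the estimate $\mathbb E V(t,x^k_t)\leq\mathbb E v(t,x^k_t,\mathscr L(x^k_t))$ combined with~\eqref{eq Vk limit} or~\eqref{eq Vk D limit} and Markov's inequality controls the probability that $x^k$ leaves any fixed $D_j$, uniformly in $k$. Together with Assumption~\ref{ass local boundedness} (resp.\ Assumption~\ref{ass integrated growth}), which bounds the coefficients once trajectories are confined, standard Aldous-type criteria give tightness of $\{\mathscr L(x^k,w^k)\}$ in $\mathcal P(C(I;D)\times C(I;\bR^{d'}))$. Applying Skorokhod's theorem extracts a subsequence converging almost surely on a common probability space to some $(x,w)$; continuity of the coefficients along this a.s.-converging sequence, together with the uniform integrability provided by the local boundedness and the uniform bound on $\int v\,d\mathscr L(x^k_t)$, allows passage to the limit in both the Lebesgue and the It\^o integrals, identifying $(x,w)$ as a weak solution to~\eqref{eq mkvsde}. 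Finally, lower semicontinuity and Fatou's lemma transfer the Lyapunov bound to $x$, yielding $\sup_{t\in I}\mathbb E v(t,x_t,\mathscr L(x_t))<\infty$.

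The chief difficulty is Step 1: the classical localization trick fails, as emphasized in the introduction, so the approximating McKean--Vlasov equations themselves must be designed to (a) admit solutions via known theorems, (b) keep the law of the solution supported in $D_k$ so that the integrated Lyapunov inequality can be invoked, and (c) deform the coefficients in a way compatible with Assumption~\ref{ass continuity} in the limit $k\to\infty$. Reconciling these three requirements — especially the second, which rules out naive stopping and demands a genuine modification of the equation — is the main technical obstacle; everything downstream (the It\^o/Gronwall bound, tightness, and Skorokhod passage) is then fairly mechanical given the tools in the appendix.
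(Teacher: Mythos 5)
Your overall strategy matches the paper's: truncate to confined McKean--Vlasov equations, obtain uniform Lyapunov bounds via It\^o/Gronwall, prove tightness, apply Skorokhod, and pass to the limit. But your Step~1 has a genuine gap. You propose a \emph{smooth} cutoff supported in $D_{k+1}$ (plus a push-forward of the measure argument and/or an inward-pointing correction). The appeal is that the resulting coefficients are bounded and continuous, so existence of the truncated equation follows from Funaki. But a smooth multiplicative cutoff destroys the Lyapunov inequality on the transition annulus $D_{k+1}\setminus D_k$: there is no reason the modified generator should be dominated by $m_1 v+m_2$ there (the measure-derivative terms in $L^\mu$ integrate contributions over space in a way a multiplicative cutoff cannot control, and the smooth cutoff squares inside the diffusion part). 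You in fact invoke Remark~\ref{rmk assumption}(ii) to conclude $L^{\mu,k}v\leq m_1v+m_2$, but that remark is specifically about the \emph{indicator} cutoff $L^{\mu,k}=\1_{x\in D_k}L^\mu$, for which the inequality is an immediate consequence of $v\geq 0$; it does not apply to your construction. The ``inward-pointing correction'' would distort the generator further with no control, and the ``stopping'' alternative does not produce a McKean--Vlasov equation at all (exactly as the introduction warns).

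What the paper actually does is use the \emph{discontinuous} indicator cutoff $b^k=\1_{x\in D_k}b$, $\sigma^k=\1_{x\in D_k}\sigma$. This preserves the Lyapunov condition (pointwise or integrated, once $\mu\in\mathcal P(D_k)$) and automatically confines the process to $\overline{D_k}$, with no push-forward of the measure argument or inward-pointing drift needed. The price is that these discontinuous coefficients rule out invoking Funaki directly; the paper therefore inserts an extra layer: approximate the cutoff equation by its Euler scheme $x^{k,n}$, prove tightness in $n$ (trivial since the cutoff coefficients are bounded), apply Skorokhod, and identify the limit $\tilde x^k$ as satisfying the \emph{original} equation only for $t\leq\tilde\tau^k_k$, before exit from $D_k$, where the indicator is identically $1$ and the joint continuity of $b,\sigma$ in Assumption~\ref{ass continuity} is exactly what Skorokhod's convergence lemma requires. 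Only then does one pass $k\to\infty$ using the uniform Lyapunov estimates. This double approximation --- Euler in $n$ under the indicator cutoff, then $k\to\infty$ --- is the crux of the paper's resolution of the ``chief difficulty'' you correctly identify, and it is absent from your proposal.
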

\bothreferees{The statement of the theorem has been changed to include the new assumption on the initial distributions and the differing assumptions 2.1a, 2.1b and 2.2}
We make the following comment. 
By virtue of Assumption~\ref{ass local boundedness} we have that under the conditions of Theorem~\ref{thm:weakexistence}, 
the $v$-integrable weak solution to~\eqref{eq mkvsde} obtained by the theorem 
satisfies the forward nonlinear Fokker--Planck--Kolmogorov equation~\eqref{eq fwd kolmogorov}, where $\mu_t = \mathscr L(x_t)$.

\subsection{Proof of the Existence Results}
We will use the convention that the infimum of an empty set is positive infinity.
We extend $b$ and $\sigma$ in a measurable but discontinuous way to functions on $\mathbb{R}^+\times \mathbb R^d \times \mathcal{P}(\mathbb R^d)$ by taking 
\[
b(t,x,\mu) = \sigma(t,x,\mu) = 0 \,\,\, \text{if $x\in \mathbb R^d \setminus D$ or if $t\notin I$}.
\]

For $t\notin I$ we set $m_1(t) = m_2(t)=0$. Consequently from here onwards $I=[0,\infty)$. We define
\[
b^k(t,x,\mu) := \mathds 1_{x\in D_k} b(t,x,\mu)\,\,\,\text{and}\,\,\,
\sigma^k(t,x,\mu) := \mathds 1_{x\in D_k} \sigma(t,x,\mu)\,.
\]

We now provide some results (Lemmas \ref{lemma:uniform} and \ref{lemma tightness} and Corollary \ref{corollary bound for limit process}) regarding a sequence of processes whose existence will be proved as part of Theorem \ref{thm:weakexistence}.

\begin{lemma}
\label{lemma:uniform}
Let Assumptions \ref{ass init} and~\ref{ass local boundedness} hold.
Let there exist a probability space $(\Omega, {\mathcal F}, {\mathbb P})$ with filtration $({\mathcal F}_t)_{t\in I}$, adapted Wiener process $ w$ and adapted processes $(x^k)_k$ that satisfy, for all $t \in I$, 
\begin{equation} \label{eq mkvsde k}
d x^k_t = b^k(t, x^k_t,\mathscr L( x^k_t))\,dt
+ \sigma^k(t, x^k_t,\mathscr L( x^k_t))\,d w_t\,,\,\,\, \mathscr L(x^k_0) =  \mu^k_0\,.
\end{equation}
\firstrefereee{$x^k_0=x_0$ changed to $\mathscr L(x^k_0) =  \mu^k_0$ as in assumption \ref{ass init}}
For any $m,k\in\mathbb N$, let  ${\tau}^k_m := \inf\{t\in I:{x}^k_t \notin D_m\}$.
\firstrefereee{Changed from 'For $m\leq k$, to any $m,k$.}

\begin{enumerate}[i)]
\item If either Assumption~\ref{a-nonint}a, Assumption~\ref{a-nonint}b or~\ref{a-int} hold then for any $t\in I$,
\begin{equation*}
\sup_{k} \mathbb E v(t, x^k_t,\mathscr L( x^k_t))  \leq M(t) \,.
\end{equation*}
\item If either Assumption~\ref{a-nonint}a, Assumption~\ref{a-nonint}b or~\ref{a-int} hold then for any $t\in I$ and $k\in \mathbb N$,
\[
\mathbb P({\tau}_k^k < t) \leq  
 M(t) V_k^{-1}\,.
\]
\bothreferees{Mention of $\mathbb P ({x}_0 \notin D_k)$ has been removed due to the new assumption on the initial condition}
\item  
If Assumption~\ref{a-nonint}a holds then for any $t\in I$,
\[
 \sup_k  \mathbb P({\tau}^k_m < t) \leq M^+(t)V_m^{-1}+\bP(x^k_0\in D_m ).
\]
\item If Assumption~\ref{a-nonint}b or Assumption~\ref{a-int} holds then for any $t\in I$,
\[
\sup_{k}\mathbb P ( x^k_t \notin D_m) \leq  M(t)V_m^{-1}\,.
\]

\end{enumerate}
\color{black}
\end{lemma}

\begin{proof}
i) Since, $\mu^k_0$ is supported on $D_k$ for each $k\in \mathbb N$ and the coefficients $b^k$ and $\sigma^k$ are zero outside $D_k$, we know that the support of $x^k_t$ is contained within $\overline{D}_k$ for all $t\in I$, therefore $\mathscr{L}( x^k_t) \in \cP_2(D)$ and we can apply the It\^o formula from Proposition~\ref{propn ito formula full} to $\gamma v$ with arguments $t$, ${x}^k$ and its law. Thus 
\[
\begin{split}
& \gamma(t) v(t, x^k_t,\mathscr{L}( x^k_t)) =  \gamma(0) v(0, x^k_0,\mathscr{L}( x^k_0)) \\
& \qquad   + \int_0^{t} \gamma(s) [L^\mu v  - m_1 v](s, x^k_s,\mathscr{L}( x^k_s))    \, ds  
+ \int_0^{t} \gamma(s) [(\partial_x v) \sigma](s, x^k_s,\mathscr{L}( x^k_s)) \, d w_s\,.
\end{split}
\]
Due to the local boundedness of the coefficients and either Lyapunov condition~\eqref{eq b2} or~\eqref{eq b1} combined with Remark \ref{rmk assumption} ii) we get
\bothreferees{Added the reference to Remark \ref{rmk assumption}}
\begin{equation}
\label{eq: bound}	
\mathbb E \gamma(t) v(t , x^k_t,\mathscr{L}( x^k_t)) \leq \mathbb E \gamma(0) v(0, x^k_0,\mathscr{L}( x^k_0))
+ \int_0^t \gamma(s) m_2(s)ds \,.
\end{equation}
This proves the first part of the lemma. 

ii) For the second part we proceed as follows. Noting that the coefficients $b^k$ and $\sigma^k$ are zero outside $D_k$, once the process $x^k$ leaves $D_k$ the process stops, yielding ${x}^k_t = {x}^k_{t\wedge  {\tau}_k^k}$ for all $t\in I$, which implies $\mathscr{L}( x^k_t) = \mathscr{L}(x^k_{t\wedge {\tau}^k_k})$ for all $t\in I$. We further observe using \eqref{eq V ineq} that,
\secondrefereee{Justification that ${x}^k_t = {x}^k_{t\wedge  {\tau}_k^k}$ for all $t\in I$ has been added}
\[
\begin{split} 
\mathbb E v(t, x^k_{t},\mathscr{L}( x^k_{t})) =  \,
\mathbb E v(t, x^k_{t\wedge {\tau}_k^k},\mathscr{L}( x^k_{t\wedge{{\tau}}^k_k})) 
\geq \mathbb E [ V(t, x^k_{t\wedge  {\tau}_k^k}) \mathds 1_{ {\tau}_k^k < t} ]   = &  \, \mathbb E [ V(t , x^k_{{\tau}_k^k}) \mathds 1_{   {\tau}_k^k < t} ]\\
\geq  & V_k\mathbb P (   {\tau}_k^k < t)\,.
\end{split} 
\]
Hence,
\[
\begin{split}
& \mathbb P( {\tau}_k^k < t) \leq \frac{\mathbb E v(t, x^k_{t\wedge {\tau}_k^k},\mathscr{L}( x^k_t))}{V_k} \,.	
\end{split}
\]
This completes the proof of the second statement.
\bothreferees{Part ii of the proof was simplified, since we have $\bP(x^k_0\notin D_k)=0$ by the new assumption on the initial condition}

iii) To prove the third statement we first note that for $m>k$ we have $\mathbb P(\tau^k_m < t) = \mathbb P(x^k_0 \notin D_m )=0$. 
Thus we may assume that $m\leq k$.
We proceed similarly as above but with the crucial difference that 
${x}^k_t$ is no longer equal to ${x}^k_{t\wedge \tau^k_m}$. 
Our aim is to apply the It\^{o} formula to the function $v$, the process
$( x^k_{t\wedge \tau^k_m})_{t\in I}$ and the flow of marginal measures $(\mathscr{L}( x^k_t))_{t\in I}$.
Note that $\caL( x^k_{t\wedge \tau^k_m}) \neq \mathscr{L}( x^k_t)$.
Nevertheless the It\^o formula~\ref{propn ito formula full} may be applied.
After taking expectations this yields
\[
\begin{split}
\bE \left[ \gamma(t\wedge \tau^k_m) v(t\wedge \tau^k_m, x^k_{t\wedge \tau^k_m},\mathscr{L}( x^k_t)) \right]
=& \bE v(0, x^k_0,\mathscr{L}( x^k_0))\\
 & +  \bE \int_0^{t\wedge \tau^k_m}  \gamma(s) \left[ L^{\mu,k} v - m_1 v\right](s, x^k_s,\mathscr{L}( x^k_s)) \, ds.  
\end{split}
\]
We now use~\eqref{eq b2} to see that  
\[
\begin{split}
& \bE \left[ \gamma(t\wedge \tau^k_m) v(t\wedge \tau^k_m, x^k_{t\wedge \tau^k_m},\caL( x^k_t)
)\right] 
\leq  \bE v(0, x^k_0,\caL( x^k_0)) + \bE \int_0^{t\wedge \tau^k_m} \gamma(s) m_2(s) \, ds  \\
& \leq  \varphi_v(\langle \mu_0,v(0,\cdot,\mu_0)\rangle) +  \int_0^{t } \gamma(s) m_2^+(s) \,  ds =: \bar M(t)\,.
\end{split}
\]
Then 
\[
\inf_{s\leq t}\gamma(s) \bE  v(t\wedge \tau^k_m,x^k_{t\wedge \tau^k_m},\caL( x^k_t)
) \leq \bE \gamma(t\wedge \tau^k_m) v(t\wedge \tau^k_m,x^k_{t\wedge \tau^k_m},\caL( x^k_t)
) 
\leq \bar M(t)
\]
and so using \eqref{eq V ineq1} we see the following, 
\[
\begin{split}
\E[v(t\wedge\tau^k_m,x^k_{t\wedge\tau^k_m},\mathscr L(x^k_t))]\geq \E[V(t\wedge\tau^k_m,x^k_{t\wedge\tau^k_m})]\geq &
 \E[V(t\wedge\tau^k_m,x^k_{t\wedge\tau^k_m})\1_{\{0<\tau^k_m<t\}}]\\
 \geq & V_m\P(0<\tau^k_m<t).
\end{split}
\]
Combining the above we have,
\[
\begin{split}
 \mathbb P( \tau^k_m < t) = & \P(0<\tau^k_m <t)+ \P(\tau^k_m=0)
\leq   \frac{1}{\inf_{s\leq t} \gamma(s)} \frac{\bar M(t)}{V_m} + \mathbb P ({x}^k_0 \notin D_m)\,.	
\end{split}
\]
\firstrefereee{Indeed, as you remarked in point 19 of your report, this did not follow as stated, however, under the assumption 2.1a, the inequality follows.}
\secondrefereee{$V_m(\mathscr L(x^k_t))$ changed to $V_m$.}

We conclude by observing that 
\[
\inf_{s\leq t} \gamma(s) \geq  e^{-\int_0^t (m_1(s))^+\,ds}.
\]


iv) To prove the fourth statement, first note that for $m>k$, $\bP ( x_t^k \notin D_m)=0$ and hence we take $m\leq k$.
Conditions \eqref{eq V ineq} and \eqref{eq Vk D limit} imply that

\begin{equation*}
\begin{split}
\mathbb E v(t, x^k_{t},\mathscr{L}( x^k_t)) 
\geq & \int_{D} V(t,x)	\mathscr{L}( x^k_t)(dx)
\geq  \int_{D\cap D_m^c} V(t,x)	\mathscr{L}( x^k_t)(dx) 
\geq  V_m \bP(x^k_t \notin D_m)\,.
\end{split}
\end{equation*}

\end{proof}

\bothreferees{Remark label 2.11 has been removed from the following statement, to minimize the change in labelling for the results that follow}
Since we are assuming that $\mathbb P(x_0 \in D) = 1$ we have 
\[
\lim_{k \to \infty} \mathbb P(x_0 \notin D_k) 
= 1 - \lim_{k\to \infty} \mathbb P(x_0 \in D_k) 
= 1 - \mathbb P \big (\bigcup_k \{x_0 \in D_k\}\big ) = 0\,.
\]	


\begin{corollary}
\label{corollary bound for limit process}
Let Assumption~\ref{ass local boundedness} hold.
Let $( \Omega, {\mathcal F},({\mathcal F}_t)_{t\in I}, {\mathbb P})$
be a filtered probability space equipped with an $({\mathcal F}_t)_{t\in I}$-Wiener process $ w$ and a sequence of adapted processes $({x}^k)_k$ such that~\eqref{eq mkvsde k} holds for all $t \in I$, $k\in \mathbb N$.
Assume that $ x^k \to x$ in $C(I; D)$ ${\mathbb P}$-almost surely.	
If either Assumption~\ref{a-nonint}a, \ref{a-nonint}b or~\ref{a-int} hold then \color{black}
\[
\sup_{t\in I} \mathbb E v(t, x_t, \mathscr L( x_t)) \leq \sup_{t\in I} M(t) \,,
\]
where $M$ is given in~\eqref{eq: uniform tauk est}.
\end{corollary}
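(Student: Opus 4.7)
The strategy is a direct passage to the limit: Lemma~\ref{lemma:uniform}(i) gives a uniform-in-$k$ bound at each time $t$, and we push this bound through to the limit process $\tilde x$ by combining continuity of $v$ with Fatou's lemma.

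First, fix $t \in I$. Since the family $(\tilde x^k)_k$ satisfies~\eqref{eq mkvsde k} by hypothesis, Lemma~\ref{lemma:uniform}(i) applies under either Assumption~\ref{a-nonint} or~\ref{a-int} and yields
\[
\sup_k \, \bE\, v(t,\tilde x^k_t,\mathscr L(\tilde x^k_t)) \,\leq\, M(t) \,\leq\, \sup_{s\in I} M(s).
\]
This bound is uniform in $k$ and is the only ingredient coming from the SDE structure.

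Next, I use the assumed convergence $\tilde x^k \to \tilde x$ in $C(I;\bar D)$. This gives in particular almost-sure pointwise-in-time convergence, so $\tilde x^k_t \to \tilde x_t$ a.s.\ in $\bar D$ for every fixed $t \in I$; applying the continuous mapping theorem to the evaluation-at-$t$ functional, it follows that $\mathscr L(\tilde x^k_t) \Rightarrow \mathscr L(\tilde x_t)$. Because $v \in \mathcal C^{1,2,(1,1)}(I \times D \times \mathcal P_2(D))$, it is in particular jointly continuous in the state and measure arguments in the relevant topology (see Appendix~\ref{sec measure derivatives and ito}), so
\[
v(t,\tilde x^k_t,\mathscr L(\tilde x^k_t)) \to v(t,\tilde x_t,\mathscr L(\tilde x_t)) \quad \text{a.s. as } k\to\infty.
\]
Since $v \geq 0$, Fatou's lemma then gives
\[
\bE\, v(t,\tilde x_t,\mathscr L(\tilde x_t)) \,\leq\, \liminf_k \,\bE\, v(t,\tilde x^k_t,\mathscr L(\tilde x^k_t)) \,\leq\, M(t).
\]
Taking the supremum over $t\in I$ on the left-hand side and the uniform bound $\sup_{t\in I} M(t) < \infty$ (guaranteed by the assumptions of Theorem~\ref{thm:weakexistence} via Remark~\ref{remark on Mt}) concludes the proof.

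\textbf{Main obstacle.} The delicate point is the continuity of $v$ in the measure argument relative to the topology we actually obtain. The space $\mathcal P_2(D)$ carries the $W_2$-metric, whereas the immediate consequence of $\tilde x^k\to \tilde x$ in $C(I;\bar D)$ is only weak convergence of marginal laws. To upgrade to $W_2$-convergence one would combine the uniform estimate from Lemma~\ref{lemma:uniform}(i) with property~\eqref{eq V ineq} to obtain uniform integrability of $V(t,\cdot)$ under $\mathscr L(\tilde x^k_t)$; provided $V$ controls the second moment, this upgrades weak to $W_2$ convergence. Alternatively, restricting to the bounded sub-domains $D_k$ renders weak and $W_2$-convergence equivalent, which together with joint continuity of $v$ suffices to pass to the limit. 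Once this continuity step is justified, the rest of the argument reduces to the straightforward Fatou estimate above.
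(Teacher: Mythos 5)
Your proposal follows exactly the same route as the paper's proof: invoke Lemma~\ref{lemma:uniform}(i) for the uniform-in-$k$ bound, use continuity of $v$ together with the almost sure convergence $\tilde x^k_t \to \tilde x_t$ to pass to the limit inside $v$, then apply Fatou's lemma and take the supremum in $t$. The paper states this in one line; your version spells out the same steps and, usefully, flags the topology mismatch (weak convergence of marginals vs.\ $W_2$-continuity of $v$) that the paper silently skips over --- your proposed fix via uniform integrability from~\eqref{eq V ineq} or restriction to $D_k$ is the right way to close that small gap.
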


\begin{proof}
By Fatou's lemma, continuity of $v$ and~\eqref{eq: uniform tauk est} we get
\[
\mathbb E v(t, x_t, \mathscr L( x_t)) 
\leq \liminf_{k\to \infty} \mathbb E v(t, x^k_t, \mathscr L( x^k_t)) 
\leq \sup_{t\in I} M(t)\,.
\]
The results follows if we take supremum over $t$.
\end{proof}

Our aim is to use Skorokhod's arguments to prove the existence of a weak 
(also known as martingale) solution to the equation~\eqref{eq mkvsde}. Before we proceed to the proof of the main theorem, Theorem~\ref{thm:weakexistence}, we need to establish tightness of the family of laws of the processes defined by \eqref{eq mkvsde k}.

\begin{lemma}[Tightness]
\label{lemma tightness}
Let Assumptions \ref{ass init} and \ref{ass local boundedness} hold and let there exist a probability space $(\Omega, {\mathcal F}, {\mathbb P})$ with filtration $({\mathcal F}_t)_{t\in I}$, adapted Wiener process $ w$ and adapted processes $(x^k)_k$ that satisfy, for all $t \in I$, \eqref{eq mkvsde k}.

\begin{enumerate}[i)]
\item If Assumption~\ref{a-nonint}a holds with $\sup_{t\in I } M^+(t) < \infty$, then the law of $( x^k)_k$ is tight on $C(I;  D)$. 
\item  Let Assumptions~\ref{a-nonint}b or~\ref{a-int} hold, with Assumption \ref{ass integrated growth} and $\sup_{t\in I } M (t) < \infty$, then the law of $(x^k)_k$ is tight on $C(I; D)$. \color{black}
Additionally for any $\varepsilon >0$, there is $m_\varepsilon$ such that for all $m\geq m_{\varepsilon}$
\[
\sup_k \bP(\tau^k_m \in I)\leq \varepsilon. 
\]
\end{enumerate}
\end{lemma}
\begin{proof}
$i)$ Under the Assumption~\ref{a-nonint}a tightness of the law of $( x^k)_k$ on $C(I;D)$  follows from the third statement in Lemma~\ref{lemma:uniform}.
Indeed given $\varepsilon > 0$ we can find $m_0$ such that for any $m>m_0$
\[
\sup_{t\in I}\mathbb P(\limsup_{k\to\infty}{\tau}^k_m < t) \leq \sup_{t\in I}\liminf_{k\to \infty}\P(\tau^k_m < t)\leq \sup_{t \in I} M^+(t) V_m^{-1} +\liminf_{k\to \infty}\P(x^k_0\notin D_m) \leq  \varepsilon ,
\]
due to, in particular, our assumption that $V_m \to \infty$ as $m \to \infty$. By considering Assumption \ref{ass local boundedness} of local $v$-boundedness along with the first statement of Lemma $\ref{lemma:uniform}$, the stopped processes $x^k_{\cdot\wedge\tau^k_m}$ are tight in $C([0,t];D)$ for any $t\in I$. Thus, as $\sup_{t\in I}\bP (\limsup_{k\to\infty} \tau^k_m < t )\rightarrow 0$ as $m\rightarrow\infty$, we recover tightness of $x^k$ in $C(I;{D})$.  

$ii)$ First we observe that for every $\ell$ and $(t_1,\ldots,t_{\ell})$ in $I$, the joint distribution of $({x}_{t_1}^k, \ldots, {x}_{t_\ell}^k  )$ is tight. Indeed, 
statement iv) \color{black}in Lemma \ref{lemma:uniform} guarantees tightness of the law of ${x}_{t}^k$ for any $t\in I$. 
Given $\varepsilon > 0$, for any  $\ell \in \bN$  we can find $m_0$ such that for any $m>m_0$
\[
\mathbb P({x}_{t_1}^k\notin D_m, \ldots, {x}_{t_\ell}^k\notin D_m   ) \leq  \ell \sup_{t\in I}M(t) V_m^{-1} \leq \varepsilon\,,
\]
due to, the assumption that $V_m \to \infty$ as $m \to \infty$. 
We will use Skorokhod's Theorem (see~\cite[Ch. 1 Sec. 6]{skorokhod1965}).
This will allow us to conclude tightness of the law of $( x^k)_k$ on $C(I; D)$ as long as we can show that for any $\varepsilon>0$ 
\[
\lim_{h\rightarrow 0} \sup_k \sup_{|s_1 - s_2| \leq h } \bP(| {x}_{s_1}^k - {x}_{s_2}^k   | > \varepsilon ) =0\,.
\]
From~\eqref{eq mkvsde k}, using the Assumption~\ref{ass integrated growth}, we get,
for $0< |s_1 - s_2| < 1$, 
\begin{equation*}
\begin{split}
\bE | x^k_{s_1} - x^k_{s_2} |  \leq  & \int_{s_2}^{s_1} \bE | b^k(r, x^k_r,\mathscr L( x^k_r))|\,dr
+ \left( \bE \int_{s_2}^{s_1} | \sigma^k(r, x^k_r,\mathscr L( x^k_r))|^2 \,dr \right)^{\frac12} \\
 \leq &   c \int_{s_2}^{s_1}   \varphi_c(\sup_k \bE v(r, x^k_r,\mathscr L( x^k_r)) )    \,dr
+ \left( c \int_{s_2}^{s_1}  \varphi_c (  \sup_k \bE v(r, x^k_r,\mathscr L( x^k_r)) ) \,dr \right)^{\tfrac12} \\ 
\leq & c \left(1+\varphi_c(\sup_{t\in I}M(t))\right) |s_1-s_2|^{\tfrac12}  \,.
\end{split}
\end{equation*}
\bothreferees{Last two lines changed to reflect the new integrated growth condition}
Markov's inequality leads to 
\[
\sup_k \sup_{|s_1 - s_2|\leq h} \mathbb P\left(|x^k_{s_1} -  x^k_{s_2}| > \varepsilon\right) \leq c \varepsilon h^{\frac12}
\]
which concludes the proof of tightness. 

We will now prove the second statement in $ii)$.
Note that $C(I;D)$ is open and $C(I;D_{k-1})  \subset C(I;D_{k})$ and $\bigcup_k C(I;D_k ) = C(I;D)$. 
We know that for any $\varepsilon>0$ there is a compact set $\cK_\varepsilon \subset C(I;D)$ such that
\[
\sup_k \bP( x^k \notin \cK_{\epsilon}) \leq \epsilon.
\]

Since $\cK_\varepsilon \subset C(I;D)$  
is compact and the set of $(C(I;D_k))_k$ is an open cover, there must be some $m_\varepsilon$ such that  $\cK_\varepsilon \subset C(I;D_{m_\varepsilon})$.
But this means that 
\[\P(x^k\notin C(I;D_{m_\varepsilon})) \leq \P( x^k \notin \cK_\varepsilon)
\]
and so 
$
\bP(\tau^k_m \in I )= \bP(x^k \notin C(I;D_m)) \leq \bP( x^k \notin C(I;D_{m_\varepsilon})) \leq \P( x^k \notin \cK_\varepsilon)\leq \varepsilon\color{black}
$ 
for all $m\geq m_\varepsilon$.
\bothreferees{This arguement is updated for compact $D_k$}
 \end{proof}

\begin{proof}[Proof of Theorem~\ref{thm:weakexistence}]
Let us define 
$t^n_i := \tfrac{i}{n}$, $i=0,1,\ldots$ and $\kappa_n(t) = t^n_i$ 
for $t\in [t^n_i, t^n_{i+1})$.
Fix $k$.
We introduce Euler approximations $x^{k,n}$, $n\in \mathbb{N}$,
\[
x^{k,n}_t = x_0 + \int_0^t b^k\left(s,x^{k,n}_{\kappa_n(s)},\mathscr L(x^{k,n}_{\kappa_n(s)})\right)\,ds + \int_0^t \sigma^k\left(s,x^{k,n}_{\kappa_n(s)},\mathscr L(x^{k,n}_{\kappa_n(s)})\right)\,dw_s\,.	
\]

Let us outline the proof: As a first step we fix $k$ and we show 
tightness with respect to $n$ and Skorokhod's theorem to take $n\to \infty$. 
The second step is then to use Lemma~\ref{lemma tightness} to show tightness with respect to $k$. 
Finally we can use Skorokhod's theorem again to show that (for a subsequence) the limit 
as $k\to \infty$ satisfies~\eqref{eq mkvsde} (on a new probability space).

{\em First Step.} Using standard arguments, we can verify that, for a fixed $k$,
the sequence $(x^{k,n})_n$ is tight (in the sense that the laws induced
on $C([0,\infty), D)$ are tight).
By Prohorov's theorem (see e.g.~\cite[Ch. 1, Sec. 5]{billingsley}),
there is a subsequence
(which we do not distinguish in notation) 
such that $\mathscr{L}(x^{k,n}) \Rightarrow \mathscr{L}(x^k)$ as $n \to \infty$
(convergence in law).

Hence we may apply Skorokhod's Representation Theorem 
(see e.g. \cite[Ch. 1, Sec. 6]{billingsley})
and obtain a new probability space 
$(\tilde{\Omega}^k, \tilde{\mathcal{F}}^k,\tilde{\mathbb{P}}^k)$ 
where on this space there are new random variables 
$(\tilde x_0^n, \tilde x^{k,n}, \tilde w^n)$ 
and $(\tilde x_0, \tilde x^k, \tilde w)$ such that
\[
\mathscr{L}(\tilde{x}_0^n, \tilde{x}^{k,n}, \tilde{w}^n)
= \mathscr{L}(x_0,x^{k,n},w) \quad \forall n \in \mathbb{N},\quad \quad
\mathscr{L}(\tilde{x}_0, \tilde{x}^{k}, \tilde{w})
=  \mathscr{L}(x_0,x^{k},w)\quad\text{and}
\]
\bothreferees{Sentence removed from this point that stated "After taking another subsequence to obtain almost sure convergence from convergence in probability". It was unnecessary since we cite billingsley rather than skorokhod at this point and thus can claim sure convergence.}
\[
(\tilde{x}_0^n, \tilde{x}^{k,n}, \tilde{w}^n) 
\to (\tilde x_0,\tilde{x}^k, \tilde w) \,\,\, \text{as $n\to \infty$ in $C([0,\infty),D\times  D \times \mathbb R^{d'})$ \text{surely}}.
\]
We let 
\[
\tilde{\mathcal{F}}^k_t := \sigma\{\tilde{x}_0\}\vee \sigma\{\tilde{x}_s, 
\tilde{w}_s : s\leq t\}
\]
and define $\tilde{\mathcal{F}}^{k,n}_t$ analogously. 
Then $\tilde{w}^n$ and $\tilde{w}$ are $(\tilde{\mathcal{F}}^n_t)_{t\geq 0}$
and $(\tilde{\mathcal{F}}_t)_{t\geq 0}$-Wiener processes, respectively.
Define
\[
\tilde{\tau}^{k,n}_k := \inf\{ t \geq 0: \tilde{x}^{k,n}_t \notin D_k \}
\,\,\,\text{and}\,\,\,
 \tilde{\tau}_k^k := \inf\{ t \geq 0: \tilde{x}^k_t \notin D_k \}\,.
\]
These are $\tilde{\mathcal F}^{k,n}$ and $\tilde{\mathcal F}^k$ stopping times respectively.
Moreover, due to the a.s. convergence of the trajectories $\tilde{x}^{k,n}$ to $\tilde{x}^k$ 
we can see that
\[
\liminf_{n\to \infty} \tilde{\tau}^{k,n}_k \geq \tilde{\tau}^k_k\,.
\]
From the fact that the laws of the sequences are identical we see that we
still have the Euler approximation equation on the new probability space: for $t\geq 0$
\[
d\tilde x^{k,n}_t = b^k\left(t,\tilde x^{k,n}_{\kappa_n(t)},\mathscr L(\tilde x^{k,n}_{\kappa_n(t)})\right)\,dt
+ \sigma^k\left(t,\tilde x^{k,n}_{\kappa_n(t)},\mathscr L(\tilde x^{k,n}_{\kappa_n(t)})\right)\,d\tilde w^n_t\,.
\]
 
Using Skorohod's Lemma,
see~\cite[Ch. 2, Sec. 3]{skorokhod1965}, together with 
the continuity conditions in Assumption~\ref{ass continuity}, we can take $n\to \infty$ and conclude
that for all $t\leq \tilde \tau_k^k$ we have
\begin{equation}
\label{eq cut mkvsde}
d\tilde x^k_t = b^k(t,\tilde x^k_t,\mathscr L(\tilde x^k_t))\,dt
+ \sigma^k(t,\tilde x^k_t,\mathscr L(\tilde x^k_t))\,d\tilde w_t\,.
\end{equation}
At this point we remark that the process $\tilde x^k$ stopped at $\tilde \tau_k^k$, is well defined,
continuous on $[0,\infty)$ and satisfies~\eqref{eq cut mkvsde}
for $t\in I$. 
Abusing notation, let $\tilde x^k$ refer to this stopped process. 
Additionally, it satisfies the equation without the cutting applied to the coefficients i.e for all $t\leq \tilde{\tau}^{k}_k$:
\[
d\tilde x^{k}_t = b\left(t,\tilde x^{k}_{t},\mathscr L(\tilde x^{k}_{t})\right)\,dt
+ \sigma\left(t,\tilde x^{k}_{t},\mathscr L(\tilde x^{k}_{t})\right)\,d\tilde w_t\,.
\]
\color{black}

{\em Second Step.}  Tightness of the law of $(\tilde x^k)_k$ in $C(I; \bar D)$ follows from Lemma \ref{lemma tightness} and Remark~\ref{remark on Mt}.
From Prohorov's theorem we thus get that for a subsequence
$\mathscr L(\tilde x^k) \Rightarrow \mathscr L(\tilde x)$ as $k\to \infty$
(convergence in law).
From Skorokhod's Representation Theorem 
we then obtain a new probability space 
$(\bar \Omega, \bar{\mathcal{F}},\bar{\mathbb{P}})$ 
carrying new random variables 
$(\bar x_0^k, \bar x^k, \bar w^k)$ 
and $(\bar x_0, \bar x, \bar w)$ such that
\[
\mathscr{L}(\bar x_0, \bar x, \bar w ) 
= \mathscr{L}(\tilde{x}_0, \tilde{x}, \tilde{w})\,,
\]
\[
\mathscr{L}(\bar x_0^k, \bar x^k, \bar w^k) 
= \mathscr{L}(\tilde{x}_0^k, \tilde{x}^k, \tilde{w}^k )
\quad \forall k \in \mathbb{N},
\]
and 
\[
(\bar{x}_0^k, \bar{x}^k, \bar{w}^k) 
\to (\bar x_0,\bar{x}, \bar w) \,\,\, \text{as $k\to \infty$ in $C( I ; D \times \bar D \times \mathbb{R}^{d'})$ surely}\,.
\]
Let $\bar \tau_k^k := \inf\{ t: \bar x^k_t \notin D_k \}$,
$\bar \tau^k_m := \inf\{ t: \bar x^k_t \notin D_m \}$
and $\bar \tau^{\infty}_m := \inf\{ t: \bar x_t \notin D_m \}$.
Since
$\sup_{t < \infty} |\bar x^k_t - \bar x_t| \to 0$ we get $\limsup_{k\to\infty}\bar \tau^k_{m-1} \leq  \bar \tau^{\infty}_m$ surely. To see why this holds, assume the contrary for finite $\bar \tau^\infty_m(\omega)$ since the infinite case holds immediately. We assume for a contradiction that $\limsup_{k\to\infty} \bar \tau^k_{m-1}(\omega)> \bar \tau^\infty_m(\omega)$.
Then, there exists a subsequence $k_j$ such that $\bar \tau^{k_j}_{m-1}>\bar \tau^\infty_m$ for all $j\in\bN$. Consequently, $|\bar x^{k_j}_{\bar \tau^\infty_m}- \bar x_{\bar \tau^\infty_m}|\geq d(D_{m-1}, \partial D_{m})>0$. However, $|x^{k_j}_{\bar \tau^\infty_m}- \bar x_{\bar \tau^\infty_m}|\leq \sup_{t < \infty}|\bar x^{k_j}_t-\bar x_t|\to 0 $ as $k_j\to\infty$ and we have arrived at a contradiction. 
Then from Fatou's Lemma, and either part iii) of Lemma \ref{lemma:uniform} or part ii) of Lemma \ref{lemma tightness} depending on the type of Lyapunov condition that holds, we have that, for any $s,t\in I$, $t<s$,
\begin{equation}\label{eq tightness limit}
\begin{split}
 \mathbb P(\bar \tau^{\infty}_m \leq t) \leq \P((\limsup_{k\to\infty}\bar \tau^k_{m-1}) < s) \leq & \P(\liminf_{k\to\infty}\{\bar \tau^k_{m-1} < s\})\\
\leq & \liminf_{k\to \infty} \mathbb P(\bar \tau^k_{m-1} < s ) 
\leq \sup_k \mathbb P(\bar \tau^k_{m-1} \in I)    \to 0 \,\,\, \text{as $m \to \infty$.}
\end{split}
\end{equation}
Then the distribution of $\bar \tau^{\infty}_m$ converges in distribution, as $m\to \infty$, to a random 
variable $\bar \tau$ with distribution $\mathbb P(\bar \tau \leq T) = 0$ for any $T<\infty$ and $\mathbb P(\bar \tau = \infty) = 1$. 
In general, convergence in distribution does not imply convergence in probability.
But in the special case that the limiting distribution corresponds to 
a random variable taking a single value a.s. we obtain convergence in 
probability (see e.g.~\cite[Ch. 11, Sec. 1]{dudleybook}).
Hence $\bar \tau^{\infty}_m \to \infty$ in probability as $m\to \infty$. 
From this we can conclude that there is a subsequence that
converges almost surely.

Since~\eqref{eq cut mkvsde} holds for $\tilde x^k$ we have the corresponding
equation for $\bar x^k$ i.e.
for $t\leq \bar \tau^k_k$,
\begin{equation}
\label{eq cut mkvsde bar}
d\bar x^k_t = b(t,\bar x^k_t,\mathscr L(\bar x^k_t))\,dt
+ \sigma(t,\bar x^k_t,\mathscr L(\bar x^k_t))\,d\bar w^k_t\,.
\end{equation}
Fix $m < k'$. 
We will consider $k > k'$.
Then~\eqref{eq cut mkvsde bar} holds for all 
$t\leq \inf_{k\geq k'} \bar \tau^k_m$. 
We can now consider $\bar x^k_{t\wedge \tau^k_m}$ 
(these all stay inside $D_m$ for all $k > k' > m$) 
and use dominated convergence theorem for the bounded variation integral and Skorokhod's lemma on convergence of stochastic integrals, see~\cite[Ch. 2, Sec. 3]{skorokhod1965}, and our assumptions
on continuity of $b$ and $\sigma$ to let $k\to \infty$.
We thus obtain, for $t\leq  \inf_{k\geq k'} \bar \tau^k_m\wedge \bar\tau^\infty_m$,
\begin{equation}
\label{eq mkvsde bar}
d\bar x_t = b(t,\bar x_t,\mathscr L(\bar x_t))\,dt
+ \sigma(t,\bar x_t,\mathscr L(\bar x_t))\,d\bar w_t\,.
\end{equation}
Now, for each fixed $m<k'$,
\[
\lim_{k'\to \infty} \inf_{k\geq k'} \bar \tau^k_m = \lim_{k\to\infty} \bar \tau^k_m = \bar \tau^{\infty}_m.
\]
Finally we take $m \to \infty$ and since $\bar \tau^{\infty}_m \to \infty$ 
we can conclude that~\eqref{eq mkvsde bar} holds for all $t\in I$.
The last statement of the theorem follows from  Corollary~\ref{corollary bound for limit process}.
\end{proof}
\color{black}

\subsection{Examples}
\begin{example}[Integrated Lyapunov condition]
\label{example integrated lyapunov}
Consider the McKean--Vlasov stochastic differential equation~\eqref{eq example1}
i.e.
\[
	d x_t  = -x_t \bigg[\int_\bR y^4 \mathscr L(x_t)(dy)\bigg]\, dt + \frac{1}{\sqrt{2}} x_t\,  d w_t\,,\,\,\,
	 x_0 = \xi >0\,.
\]
Then for $v(x) =  x^4$ we have,
\[
L(x,\mu) v(x) = 3 x^4 - 4 x^4 \int_{\mathbb R} y^4 \, \mu(dy)\,.
\]
We see that the stronger Lyapunov condition~\eqref{eq b2} will not hold with $m_1 < 0$
(at least for chosen $v$, which seems to be a natural choice) and $D_k=(-k,k)$.
However, integrating leads to
\[
\int_{\bR}L(x,\mu) v(x)\mu(dx) 
= 3 \int_\bR x^4 \mu(dx) - 4 \bigg(\int_\bR x^4 \mu(dx)\bigg)^2
\]
using this we will show that the integrated Lyapunov condition~\eqref{eq b1} holds i.e. that 
\[
\int_{\bR}L(x,\mu) v(x)\mu(dx) 
\leq -\int_\bR v(x) \mu(dx) + 1
\]
is satisfied.
To see this we note that $3a-4a^2\leq 1-a$ since $-1+4a-4a^2 \leq - (1-2a)^2$. Moreover, Assumption \ref{ass integrated growth} is satisfied.  Condition~\eqref{eq lyapunov example} allows us to obtain 
uniform-in-time integrability properties for $(x_t)$ needed to study 
e.g. ergodic properties. 
\end{example}

\begin{example}[Non-linear dependence of measure and integrated Lyapunov condition] 
\label{example 2}
Consider the McKean--Vlasov stochastic differential equation~\eqref{eq example2} i.e.
\[
dx_t = - \left( \int_\bR (x_t - \a y)\caL{(x_t)}(dy) \right)^3dt + \left( \int_\bR (x_t - \a y)\caL{(x_t)}(dy) \right)^2 \sigma \, dw_t\,, 	
\]
for $t\in I$ and with $x_0 \in L^4(\cF_0,\mathbb R)$.
Assume that $m:= -(6\sigma^2 - 4 + 4\alpha) >0$.
The diffusion generator given by~\eqref{eq Lmu} is
\[
\begin{split}
& (L^\mu v)(x,\mu) =  \bigg( 
 \frac{\sigma^2}{2}\left( \int_{\bR} (x - \a y)\mu(dy)  \right)^4 \partial_x^2  v 
- \left(  \int_{\bR} (x - \a y)\mu(dy)  \right)^3 \partial_x v\bigg)(x,\mu) \\
& + \int_{\bR}\left(   \frac{\sigma^2}{2} \left( \int_{\bR} (z - \a y)\mu(dy)  \right)^4 (\partial_z \partial_\mu v)(t,x,\mu)(z)  
- \left(  \int_{\bR} (z - \a y)\mu(dy)  \right)^3 (\partial_\mu v) (t,x,\mu)(z) 
   \right) \mu(dz)\,. 
\end{split}
\]
We will show that for the Lyapunov function
\[
v(x,\mu) = \left( \int_{\bR} (x - \a y)\mu(dy)  \right)^4\,,
\]   
we have 
\[
\int_{\bR}(L^\mu v)(x,\mu)\,\mu(dx) \leq m  - m  \int_{\bR}v(x,\mu)\,\mu(dx)\,. 
\]
Indeed,
\[ 
\quad \, \,\partial_x	v(x,\mu) = 4 \left( \int_{\bR} (x - \a y)\mu(dy)  \right)^3\,, \,\,\,\,
\partial^2_x v(x,\mu) = 12\left( \int_{\bR} (x - \a y)\mu(dy)  \right)^2\, \,,
\]
\[
\partial_\mu v(x,\mu)(z) = -4\a \left( \int_{\bR} (x - \alpha y)\mu(dy)  \right)^3 \,\,\,\text{and} \,\,\,
\partial_z \partial_\mu v(x,\mu)(z) = 0 \,. 
\]
Hence 
\[
\begin{split}
\,\,(L^{\mu}v)(x,\mu) = & (6\sigma^2-4) \left(\int_\bR (x-  \alpha y) \mu(dy)\right)^6 \\
& + 4\alpha \int_\bR \left[ \left(\int_\bR(z-\alpha y)\mu(dy)\right)^3\left(\int_\bR (x-\alpha y)\mu(dy)\right)^3 \right]\,\mu(dz)\,.
\end{split}
\]
Since we want an estimate over the integral of the diffusion generator we
observe that 
\[
\begin{split}
I   :=  & \int_\bR \int_\bR \left[ \left(\int_\bR(z-\alpha y)\,\mu(dy)\right)^3 \left(\int_\bR (x-\alpha y)\mu(dy)\right)^3 \right]\,\mu(dz)\,\mu(dx)\\	
 \leq &   \left(\int_\bR \left|\int_\bR(z-\alpha y)\,\mu(dy)\right|^3 \,\mu(dz)\right)^2\,.
\end{split}
\]
By the Cauchy--Schwarz inequality we obtain
\[
I \leq  \int_\bR \left(\int_\bR(x-\alpha y)\,\mu(dy)\right)^6 \,\mu(dx) \,. 
\]
Hence, recalling $m:= -(6\sigma^2 - 4 + 4\alpha) > 0$ 
and using the inequality $-x^6 \leq 1-x^4$, we obtain that  
\[
\int_\bR (L^\mu v)(x,\mu)\,\mu(dx) 
\leq \int_\bR (6\sigma^2-4+4\alpha) \left(\int_\bR (x-\alpha y)\, \mu(dy)\right)^6   \mu(dx)
\leq m - m  \int_{\bR}v(x,\mu)\,\mu(dx)\,. 
\]
Furthermore, Assumption \ref{ass integrated growth} is readily satisfied.
\end{example}
\begin{example}[Dependence on measure not through its moments] 
\label{example 3}
Let $\mu$ be a law on $(\mathbb R, \mathscr B(\mathbb R))$ and 
let $F^{-1}_\mu:[0,1]\rightarrow \bR$ be the generalized inverse cumulative distribution function for this law.
Recall that the $\alpha$-Quantile is given by
\[
F_\mu^{-1}(\alpha):=\inf\{x\in \bR\,:\, \mu((-\infty, x]) \geq \alpha]\}\,.
\]
Define the Expected Shortfall of $\mu$ at level $\alpha$, $ES_\mu(\alpha)$, as 
\[
ES_\mu(\alpha):=\frac{1}{\alpha}\int_0^\alpha F_\mu^{-1}(s)\,ds\,.
\]
It is easy to see that for fixed $\alpha$, Expected Shortfall is a Lipschitz continuous function of measure w.r.t $p$-th Wasserstein distances for $p\geq 1$. 
Indeed fix $\mu$, $\nu \in \mathcal P_p(\mathbb R)$ and observe that
\[
\begin{split}
\left|ES_\mu(\alpha)-ES_\nu(\alpha)\right|&\leq\frac{1}{\alpha}\int_0^\alpha |F_\mu^{-1}(s)-F^{-1}_\nu(s)|\, ds\\
&\leq \frac{1}{\alpha}\int_0^1 |F_\mu^{-1}(s)-F^{-1}_\nu(s)|\,ds=\frac{1}{\alpha}W_1(\mu,\nu)\leq \frac{1}{\alpha}W_p(\mu,\nu)\,,	
\end{split}
\]
where the equality above follows from~\cite{vallender1972calculation} and an obvious geometric consideration.

We consider the following one-dimensional example, based loosely on transformed CIR:
\[
d x_t = \frac{\kappa}{2}\big[((ES_{\mathscr{L}(x_t)}(\alpha)\vee \theta)-\frac{\sigma^2}{4\kappa})   x_t^{-1}-x_t\big]\, dt +\frac{1}{2}\sigma \,dw_t.
\]
Here $x_0$ satisfies $\P[x_0>0]=1$ and $\kappa \theta \geq \sigma^2$. 

Note that by defining $D:=(0,\infty)$ and $D_k:=(\frac{1}{k},k)$, we have boundedness of the coefficients on $D_k$ and from the above observations and assumptions one can easily verify that the conditions of Theorem \ref{thm:weakexistence} are satisfied. In particular consider $v(x)=x^2+x^{-2}$. Then,

\begin{equation}\notag
\begin{alignedat}
{1}
L(x,\mu)v(x) &= \frac{\kappa}{2}\big[((ES_{\mu}(\alpha)\vee \theta)-\frac{\sigma^2}{4\kappa})   x^{-1}-x\big](2(x-x^{-3}))+\frac{1}{8}\sigma^2(2+6x^{-4}) \\
&={\kappa}\big[((ES_{\mu}(\alpha)\vee \theta)-\frac{\sigma^2}{4\kappa})\big] -\kappa x^2 - \big[\kappa (ES_{\mu}(\alpha)\vee \theta)- {\sigma^2}  \big]x^{-4}+\kappa x^{-2}+\frac{\sigma^2}{4}\\
&\leq {|\kappa|}  |ES_{\mu}(\alpha)|+ \kappa \theta  +\kappa x^{-2}\\
&\leq {|\kappa|}  |\int_\bR x\,\mu(dx)|+ \kappa \theta  +\kappa x^{-2} \\
 &\leq \frac{1}{2}{\kappa^2} +   \frac{1}{2}\int_\bR x^2\,\mu(dx) + \kappa \theta  +\kappa x^{-2}.\\
\end{alignedat}
\end{equation}
Integrating with respect to $\mu$ we see that condition \eqref{eq b1} holds. Therefore, due to Theorem~\ref{thm:weakexistence}, we have existence of a weak solution to the above McKean--Vlasov equation.
\end{example}

\section{Uniqueness}\label{sec uniq}

In this Section we prove continuous dependence on initial conditions and uniqueness under two types of Lyapunov conditions. 
For the novel {\em integrated} global Lyapunov condition we provide
an example that has been inspired by the work of~\cite{Scheutzow87} on non-uniqueness of solutions to McKean--Vlasov SDEs.

\subsection{Assumptions and Results}\label{subsec uniq results}
Recall that by $\pi \in \Pi(\mu, \nu)$ we denote a coupling between measures $\mu$ and $\nu$. 
In this section we work with a subclass of Lyapunov functions $\bar v\in C^{2}( \mathbb R^d)$ that has the properties: $\bar v\geq 0$, $\text{Ker}\,\, \bar  v= \{0\}$ and 
for all $z\in \mathbb R^d$ we have $\bar v(z) = \bar v(-z)$. 
For this class of Lyapunov functions  we define Wasserstein semi-distance on $\cP(D)$ as\color{black},
\begin{equation}
\label{eq v-wasserstein}
W_{ \bar v}(\mu,\nu) := \left( \inf_{\pi \in \Pi(\mu,\nu)}  \int_{D\times D} \bar v(x-y) \, \pi(dx,dy)	\right)\,.
\end{equation}
Indeed $W_v$ is a semi-metric and the triangle inequality, in general, does not hold. Note that $\bar v$ does not depend on a measure. For $(t,x,y)\in I \times D \times D$, $(\mu,\nu) \in \mathcal P(D) \times \mathcal P(D)$, 
and any $\varphi\in C^2(\mathbb R^d)$, we define
\begin{align*}
L(t,x,y,\mu,\nu)\varphi(x-y) := &   ( b(t,x,\mu) - b(t,y,\nu)) \partial_x \varphi(x-y)\\
 & + \frac{1}{2}\text{tr}\big((\sigma(t,x,\mu) - \sigma(t,y,\nu))(\sigma(t,x,\mu) - \sigma(t,y,\nu))^*  \partial_x^2 \varphi(x-y)\big)  \,.
\end{align*}
 \secondrefereee{Mention of 'generator' removed here for simplicity, as we are just defining notation. $\varphi$ is used as the test function now.}

\bothreferees{The following assumptions were a little too strong, since in practice, one needs finiteness of say the moments appearing in coefficients in order to establish either of the following two conditions. So now the assumptions are formulated only for solution measures instead of all measures.}

\begin{assumption}[Global Lyapunov condition] 
\label{ass gmc}
There exists $\bar v\in C^{2}( \mathbb R^d)$ and locally integrable, non-random, functions $g=g(t)$ and  $h=h(t)$ on $I$, 
such that for any two solutions $(x_t)_{t\in I}$ and $(y_t)_{t\in I}$ to \eqref{eq mkvsde}, with $\mathscr L(x_t)=\mu_t$ and $\mathscr L(y_t)=\nu_t$, for all $t\in I$, 
\begin{equation}\label{eq gmc}
L(t,x_t,y_t,\mu_t,\nu_t)\bar v( x_t-y_t) \leq  g(t) \bar v(x_t-y_t) + h(t) W_{\bar v}(\mu_t,\nu_t)\,.
\end{equation}
\end{assumption}

\begin{assumption}[Integrated Global Lyapunov condition] 
\label{ass int gmc}
There exists $\bar v\in C^{2}( \mathbb R^d)$ and a locally integrable, non-random function $h=h(t)$ on $I$, such that for any two solutions $(x_t)_{t\in I}$ and $(y_t)_{t\in I}$ to \eqref{eq mkvsde}, with $\mathscr L(x_t)=\mu_t$ and $\mathscr L(y_t)=\nu_t$, for all $t\in I$,
and for all couplings $\pi \in \Pi(\mu_t,\nu_t)$
\begin{equation}
\label{eq igmc}	
\int_{D \times D }L(t,p,q,\mu_t,\nu_t)\bar v(p-q)  \pi(dp, dq) 
\leq  h(t) \int_{D\times D} \bar v(p-q) \, \pi(dp,dq)\,.
\end{equation}
\end{assumption}
It can be shown that the three examples given in the previous section satisfy Assumption \ref{ass int gmc} with $\bar v(z)=z^2$.
 
Theorem~\ref{thm contdep} gives a stability estimate for the solution to \eqref{eq mkvsde} with respect to initial condition 
(continuous dependence
on the initial conditions). 

\begin{theorem}[Continuous Dependence on Initial Condition]\label{thm:pathuniq}
\label{thm contdep}
Let Assumption~\ref{ass local boundedness} hold. 
Let $x^i$, $i=1,2$ be two solutions to~\eqref{eq mkvsde} on the  same probability space such that $\bE \bar v(x^1_0 - x^2_0)<\infty$.
\begin{enumerate}[i)]
\item If Assumption~\ref{ass gmc} holds then for all $t\in I$	
\begin{equation}
\label{eq cont dep}
\bE \bar v( x^1_t - x^2_t)  \leq \exp\left( \int_0^t \left[g(s) + h(s) + |h(s)|\right] \,ds \right)  \bE \bar v(x^1_0 - x^2_0)\,.	
\end{equation}
\secondrefereee{factor of 2 in front of $|h|$ removed.}
\item If Assumptions~\ref{ass int gmc} and either Assumption~\ref{a-nonint}a, \ref{a-nonint}b or~\ref{a-int} hold and if there are $p,q$ with $1/p+1/q = 1$ and a constant $\kappa$ such that
for all $(t,x,\mu)$ in $I \times D \times \cP(D)$
\begin{equation}
\label{eq integrability for uniqueness}
|\partial_x \bar v(x -y)|^{2p} + |\sigma(t,x,\mu)|^{2q} + |\sigma(t,y,\nu)|^{2q} \leq \kappa (1 + v(t,x,\mu)+v(t,y,\nu))  	
\end{equation}
then for all $t\in I$
\begin{equation}
\label{eq int cont dep}
\mathbb  E \bar v(x^1_t - x^2_t) \leq \exp\left( \int_0^t  h(s) \,ds  \right) \mathbb E \bar v(x^1_0 - x^2_0)\,.	
\end{equation}
\end{enumerate}
\end{theorem}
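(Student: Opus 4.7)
The strategy for both parts is to apply It\^o's formula to $\bar v(t, x^1_t - x^2_t)$ and then close an integral inequality via Gronwall. Since $\bar v \in C^{1,2}(I \times \mathbb R^d)$ does not depend on a measure, the classical It\^o formula suffices, and in view of
\begin{equation*}
d(x^1_t - x^2_t) = \bigl(b(t,x^1_t,\mathscr L(x^1_t)) - b(t,x^2_t,\mathscr L(x^2_t))\bigr)\,dt + \bigl(\sigma(t,x^1_t,\mathscr L(x^1_t)) - \sigma(t,x^2_t,\mathscr L(x^2_t))\bigr)\,dw_t,
\end{equation*}
the drift produced by It\^o is exactly $L(t,x^1_t,x^2_t,\mathscr L(x^1_t),\mathscr L(x^2_t))\bar v(t, x^1_t - x^2_t)$.

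For part (i), I would localize with $\tau_n := \inf\{t : |x^1_t| \vee |x^2_t| > n\}$ so that the stochastic integral is a genuine martingale, take expectations, apply the pointwise condition \eqref{eq gmc}, and pass $n \to \infty$ by Fatou on the left and local boundedness (Assumption~\ref{ass local boundedness}) on the right. The crucial observation is that the joint law $\mathscr L(x^1_s, x^2_s)$ is itself a coupling in $\Pi(\mathscr L(x^1_s), \mathscr L(x^2_s))$, so by the infimum definition of the semi-Wasserstein distance,
\begin{equation*}
W_{\bar v}(\mathscr L(x^1_s), \mathscr L(x^2_s)) \leq \bE \bar v(x^1_s - x^2_s) =: \phi(s).
\end{equation*}
Decomposing $h(s) W_{\bar v} = h(s)\phi(s) + h(s)(W_{\bar v} - \phi(s))$ and using $0 \leq W_{\bar v} \leq \phi$ (so $|W_{\bar v} - \phi| \leq 2\phi$) yields $h(s) W_{\bar v} \leq [h(s) + 2|h(s)|]\phi(s)$, hence the integral inequality
\begin{equation*}
\phi(t) \leq \phi(0) + \int_0^t \bigl[g(s) + h(s) + 2|h(s)|\bigr]\phi(s)\,ds,
\end{equation*}
from which Gronwall's lemma delivers \eqref{eq cont dep}.

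For part (ii), the integrated condition \eqref{eq igmc} is applied with the specific coupling $\pi_s := \mathscr L(x^1_s, x^2_s) \in \Pi(\mathscr L(x^1_s), \mathscr L(x^2_s))$, giving the pointwise-in-time bound $\bE[L\bar v(s, x^1_s, x^2_s, \mathscr L(x^1_s), \mathscr L(x^2_s))] \leq h(s)\phi(s)$. The main technical step is verifying that the stochastic integral is a true martingale, so its expectation vanishes without localization. Young's inequality $|a|^2|b|^2 \leq \tfrac{1}{p}|a|^{2p} + \tfrac{1}{q}|b|^{2q}$, combined with \eqref{eq integrability for uniqueness} and the uniform bound $\sup_{s \in I}\bE v(s,x^i_s,\mathscr L(x^i_s)) < \infty$ supplied by Theorem~\ref{thm:weakexistence} under either Assumption~\ref{a-int} or~\ref{a-nonint}, controls
\begin{equation*}
\bE \int_0^t \bigl|\partial_x \bar v(x^1_s - x^2_s)\bigr|^2 \, \bigl|\sigma(s,x^1_s,\mathscr L(x^1_s)) - \sigma(s,x^2_s,\mathscr L(x^2_s))\bigr|^2\,ds < \infty,
\end{equation*}
so the stochastic integral is square-integrable with zero mean. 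A final application of Gronwall to $\phi$ yields \eqref{eq int cont dep}.

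The main obstacle I anticipate is the martingale verification in part (ii): the dual exponents $(p,q)$ in \eqref{eq integrability for uniqueness} are calibrated precisely so that, after splitting $|\partial_x \bar v|^2 |\sigma^1 - \sigma^2|^2$ via Young, both resulting factors become $v$-integrable uniformly in time. Without the uniform-in-time bound on $\bE v$ guaranteed by Theorem~\ref{thm:weakexistence}, one could not dispense with localization. Part (i) is by comparison routine, since local boundedness together with $\bar v \geq 0$ makes the Fatou passage straightforward, and the only subtlety there is correctly handling the sign of $h$.
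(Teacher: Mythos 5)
Your proposal is correct and takes essentially the same route as the paper. Both proofs apply It\^o's formula to $\bar v(x^1_t - x^2_t)$, identify the drift with $L(t,x^1_t,x^2_t,\mathscr L(x^1_t),\mathscr L(x^2_t))\bar v$, use that the joint law $\mathscr L(x^1_s,x^2_s)$ is a coupling so $W_{\bar v}(\mathscr L(x^1_s),\mathscr L(x^2_s))\le \bE\bar v(x^1_s-x^2_s)$, localize to get a martingale in case (i) and remove localization by Fatou, and in case (ii) use Young's inequality together with \eqref{eq integrability for uniqueness} and the uniform-in-time bound on $\bE v$ to show the stochastic integral is a genuine martingale. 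The only difference is bookkeeping: the paper introduces the integrating factor $\varphi(t)=\exp(-\int_0^t[g(s)+h(s)]\,ds)$, applies It\^o to $\varphi\,\bar v$, and uses Gronwall on the residual $|h|$ term (which in fact yields the slightly sharper exponent $g+h+|h|$); you apply It\^o to $\bar v$ directly and run Gronwall once on $g+h+2|h|$. Both deliver \eqref{eq cont dep}. Two small remarks: your bound $|W_{\bar v}-\phi|\le 2\phi$ is valid but the natural estimate from $0\le W_{\bar v}\le\phi$ is $|W_{\bar v}-\phi|\le\phi$, and both you and the paper gloss over the fact that Definition~\ref{def soln} only gives $\bE v(t,x_t,\mathscr L(x_t))<\infty$ pointwise in $t$, whereas the martingale verification in case (ii) needs integrability on $[0,t]$; this is where the Lyapunov assumption must be invoked via a stopping-time argument analogous to Lemma~\ref{lemma:uniform}.
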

First we note that in the case when $I$ is a finite time interval then the sign
of the functions $g$ and $h$ plays no significant role. 
In relation to the study of ergodic SDEs e.g.~(18) in~\cite{Bolley2007} we make the following observations.
If $I=[0,\infty)$ and Assumption~\ref{ass gmc} holds with $g + h + |h| < 0$ then $\lim_{t\to \infty} \mathbb E \bar v(x^1_t - x^2_t) = 0$. 
However we see that while the spatial dependence of coefficients can
play a positive role for the stability of the equation (if $g$ is negative) it seems that the measure dependence never has such positive
role, regardless of the sign of $h$.
If $I=[0,\infty)$ and we are in the second case of Theorem~\ref{thm contdep} 
then negative $h$ can play a positive role for stability (but unlike the 
first case we also need the condition~\eqref{eq integrability for uniqueness}).

\begin{proof}
Note that if we are in case ii) then, in the following we set $g = 0$ for all $t\in I$.
Let
\[
\varphi(t) = \exp\left(-\int_0^t [g(s) + h(s)]\, ds\right) \,.
\]
Applying the classical It\^o formula to $\varphi\, \bar v(x^1-x^2)$ we have that for $t\in I$
\begin{equation}
\label{eq after ito for uniq}
\begin{split}
\varphi(t) \bar v(x^1_t-&x^2_t)  = \bar v(x^1_0 - x^2_0) \\
& +  \int_0^{t} \varphi(s) \big[
L(s,x^1_s,x^2_s,\mathscr L(x^1_s),\mathscr L(x^2_s))\bar v(t,x^2_s-x^2_s)
- (g(s) + h(s)) \bar v(x^1_{s}-x^2_{s}) \big]\,ds\\  
  & +  \int_0^{t} \varphi(s)   \partial_x \bar  v( x^1_s-x^2_s)(\sigma(s,x^1_s,\mathscr L(x^1_s))-\sigma(s,x^2_s,\mathscr L(x^2_s))) dw_s.
\end{split}
\end{equation}
{\em Case i)}
Assumption~\ref{ass gmc} implies
\begin{equation*}
\begin{alignedat}{1}
\varphi(t) \bar v(x^1_{t}-x^2_{t}) \leq \, & \bar v(x^1_0 - x^2_0)
+ \int_0^t \varphi(s)\big[ h(s) W_{\bar v}(\mathscr L(x^1_s),\mathscr L(x^2_s))
 - h(s) \bar v(x^1_{s}-x^2_{s}) \big]
 \,ds\\  
 & +   \int_0^{t} \varphi(s) \partial_x  \bar v( x^1_s-x^2_s)(\sigma(s,x^1_s,\mathscr L(x^1_s))-\sigma(s,x^2_s,\mathscr L(x^2_s))) dw_s.
\end{alignedat}
\end{equation*}
Define the stopping times $\{\tau^i_m\}_{m\geq 1}$, $i=1,2$ and $\{\tau_m\}_{m\geq 1}$
\[
\tau^i_m := \inf \{t\in I \,:\, x^i_t\notin D_m  \}\,,\,\, i = 1,2\,\,\,\text{and}\,\,\, \tau_m := \tau^1_m \wedge \tau^2_m\,.
\]
By Definition~\ref{def soln} we know that $x^i \in C(I;D)$ a.s. 
and so $\tau^i_m \nearrow \infty$ a.s. and hence $\tau_m \nearrow \infty$ a.s. as $m\to \infty$.     
The local boundedness  of $\sigma$ ensures that the stochastic integral in the above is a martingale on $[t\wedge\tau_m]$, hence
\begin{equation*}
\begin{alignedat}{1}
& \bE[ \varphi(t\wedge\tau_m) \bar v( x^1_{t\wedge\tau_m}-x^2_{t\wedge\tau_m})]\\ 
& \leq \bE[\bar v(x^1_0 - x^2_0)] 
+ \bE\left[\int_0^{t\wedge\tau_m} \varphi(s)\big[ h(s)W_{\bar v}(\mathscr L(x^1_s),\mathscr L(x^2_s))
 - h(s) \bar  v( x^1_{s}-x^2_{s}) \big]
 \,ds \right]\\  
& \leq  \bE[\bar v( x^1_0 - x^2_0 )] +  \bE\left[\int_0^{t} \varphi(s)\big[ |h(s)| \bar v(x^1_{s}-x^2_{s}) \big]\,ds \right]
 \,,
\end{alignedat}
\end{equation*}
where the last inequality follows from the definition of the semi-Wasserstein distance. 
Since $\tau_m\nearrow\infty$ as $m\rightarrow\infty$, application of Fatou's Lemma gives 
\[
\bE[\varphi(t)\bar v( x^1_t - x^2_t )] \leq \E \bar v(x^1_0 - x^2_0) + \int_0^t |h(s)|\E[ \varphi(s) \bar v( x^1_s-x^2_s )]\,ds.
\]
From Gronwall's lemma we get~\eqref{eq cont dep}. 

{\em Case ii)} 
Taking expectation in~\eqref{eq after ito for uniq}, recalling that in this case $g=0$ and then using Assumption~\ref{ass int gmc} we have
\begin{equation*}
\mathbb E \left[\varphi(t) \bar v (x^1_{t}-x^2_{t}) \right] 
\leq  \mathbb E\bar v(x^1_0 - x^2_0) 
+  \mathbb E\int_0^{t} \varphi(s) \partial_x \bar v( x^1_s-x^2_s)(\sigma(s,x^1_s,\mathscr L(x^1_s))-\sigma(s,x^2_s,\mathscr L(x^2_s)))\, dw_s.
\end{equation*}
\secondrefereee{Assumption \ref{ass local boundedness} is claimed in the theorem, allowing application of corollary \ref{corollary bound for limit process}}
Corollary~\ref{corollary bound for limit process} together with~\eqref{eq integrability for uniqueness} 
and local integrability of $g$ and $h$
ensures that stochastic integral in the above expression is a martingale. 
Indeed
\[
\begin{split}
& \int_0^t \varphi(s)^2\mathbb E\left[ |\partial_x  \bar v(x^1_s - x^2_s)|^2 |\sigma(s,x^1_s,\mathscr L(x^1_s))-\sigma(s,x^2_s,\mathscr L(x^2_s))|^2 \right]\,ds \\
& \leq \int_0^t \varphi(s)^2\mathbb E\left[ \frac{1}{p}|\partial_x \bar v(x^1_s - x^2_s)|^{2p} + \frac{1}{q}|\sigma(s,x^1_s,\mathscr L(x^1_s))-\sigma(s,x^2_s,\mathscr L(x^2_s))|^{2q} \right]\,ds \\	
& \leq c_{p,q} \int_0^t \varphi(s)^2\mathbb E\left[\partial_x  \bar v(x^1_s - x^2_s)|^{2p} 
+ |\sigma(s,x^1_s,\mathscr L(x^1_s))|^{2q}+|\sigma(s,x^2_s,\mathscr L(x^2_s))|^{2q} \right]\,ds \\	
& \leq c_{p,q} \int_0^t \varphi(s)^2 \kappa \left( 1 +  \mathbb E v(s,x^1_s,\mathscr L(x^1_s)) + \mathbb Ev(s,x^2_s,\mathscr L(x^2_s)) \right)\,ds < \infty\,.
\end{split}
\]
Hence
\begin{equation*}
\varphi(t)\bE[ \bar v(x^1_{t}-x^2_{t})] \leq \bE[ \bar v(x^1_0 - x^2_0)]\,. 
\end{equation*}
\end{proof}

\begin{corollary}
\label{corollary uniqueness}
Let the conditions for either case i) or ii) of Theorem~\ref{thm contdep} hold with either \newline 
$\sup_{t\in I}\exp\left( \int_0^t[g(s)+h(s)+|h(s)|]\,ds \right)<\infty $ or $\sup_{t\in I}\exp\left( \int_0^th(s)\,ds\right)<\infty $ respectively. If $x^1_0 = x^2_0$ a.s. then the solutions to~\eqref{eq mkvsde} are pathwise unique.	
\end{corollary}
\begin{proof}
Since $\text{Ker}\,\,\bar v = \{ 0 \}$, we have that for all $t\in I$, $\P(x^1_t=x^2_t)= 1$. Then, since the processes have continuous paths, we can conclude that they are indistinguishable. 
\end{proof}
\secondrefereee{Indeed, we should have assumed the added uniform bound.}
\subsection{Example due to Scheutzow.}

Consider the McKean--Vlasov SDE of the form 
\begin{equation}
\label{eq:examplescheutzow}
x_t=x_0+\int_0^t B(x_s,\E[\bar b(x_s)])\,ds + \int_0^t \Sigma(x_s,\E[\bar \sigma(x_s)])\,dw_s\,.
\end{equation}
Our study of this more specific form of McKean--Vlasov SDE 
is inspired by~\cite{Scheutzow87}, where it has been shown that in the case 
when $\Sigma=0$ and either of functions $B$ or $\bar b$ is only locally Lipschitz continuous then uniqueness, in general, does not hold. 
We will show that if we impose some structure on the local behaviour of the functions then these, together with the integrability conditions established in Theorem~\ref{thm:weakexistence}, are enough to obtain unique solution~\eqref{eq:examplescheutzow}. 
To be more specific: we impose local (in the second variable) monotone condition on functions $B$ and $\Sigma$, which is weaker than local (in the second variable) Lipschitz condition, and local Lipschitz condition on functions $\bar b$ and $\bar \sigma$. 
	
\begin{assumption} \label{ass scheutzow}
\hfill{}
\begin{enumerate}[i)]
\item Local Monotone condition: there exists locally bounded function $M=M(x',y',x'',y'')$ such that $\forall x,x',x'',y,y',y'' \in D$
\[
\begin{split}
2(x-y)(B(x,x')-B(y,y')) + &|\Sigma(x,x'') -  \Sigma(y,y'')|^2 \\
&\leq  M(x',y',x'',y'')(| x - y |^2 + | x' - y' |^2 + |x'' - y''|^2) 
\end{split}
\]
\hspace{-6mm}There exists a constant $\kappa$ such that:
\item 
$\forall (t,x,\mu) \in I \times D \times \cP(D)$    $|\bar b(x)| + |\bar \sigma (x)| \leq \kappa(1 + v(t,x,\mu) )$, and
\item $\forall (t,x,y,\mu) \in I \times D\times D \times \cP(D)$
\secondrefereee{specified for all $y\in D$.}
\[
|\bar b(x)-\bar b(y)|+ |\bar \sigma(x)-\bar \sigma(y)|\leq \kappa(1+\sqrt{v(t,x,\mu) } + \sqrt{v(t,y,\mu)})|x-y|\,.
\]
\end{enumerate}
\end{assumption}

\begin{theorem}
If Assumptions~\ref{a-int} hold,
if $\sup_{t\in I} M(t) < \infty$ and if Assumptions~\ref{ass local boundedness}, \ref{ass scheutzow} hold then the solution to~\eqref{eq:examplescheutzow} is unique.
\end{theorem}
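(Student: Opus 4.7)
The plan is to adapt the proof of Theorem~\ref{thm contdep}~ii) to the Scheutzow SDE~\eqref{eq:examplescheutzow} with the choice $\bar v(x) = |x|^2$. I avoid invoking Theorem~\ref{thm contdep} verbatim because Assumption~\ref{ass int gmc} is phrased for all couplings $\pi \in \Pi(\mu,\nu)$, whereas the $v$-weighted local Lipschitz bound of Assumption~\ref{ass scheutzow}~iii) is only usable along the distinguished coupling $\mathscr L(x^1_s,x^2_s)$, where the uniform integrability from Theorem~\ref{thm:weakexistence} can be exploited.

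First, I note that Assumption~\ref{a-int}, Assumption~\ref{ass local boundedness} and $\sup_{t \in I}M(t) < \infty$ deliver, via Theorem~\ref{thm:weakexistence}, the constant $K := \sup_{t \in I,\,i = 1,2}\mathbb E v(t,x^i_t,\mathscr L(x^i_t)) < \infty$. Combined with the growth bound~ii) in Assumption~\ref{ass scheutzow}, this yields $\sup_{t \in I,\,i=1,2} \bigl(|\mathbb E\bar b(x^i_t)| + |\mathbb E\bar \sigma(x^i_t)|\bigr) \leq \kappa(1+K)$. Consequently the local coefficient $M(\mathbb E\bar b(x^1_s),\mathbb E\bar b(x^2_s),\mathbb E\bar\sigma(x^1_s),\mathbb E\bar\sigma(x^2_s))$ in Assumption~\ref{ass scheutzow}~i) is bounded above by a constant $\bar M$ uniformly in $s \in I$.

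Next, let $\tau_n := \inf\{t \in I : x^1_t \notin D_n \text{ or } x^2_t \notin D_n\}$; these stopping times increase to $\infty$ a.s.\ since $x^i \in C(I;D)$. Applying the classical It\^o formula to $|x^1_t - x^2_t|^2$, stopping at $\tau_n$, and taking expectation (the stochastic integral is a true martingale on $[0,t\wedge\tau_n]$ by local boundedness of $\Sigma$ on $D_n$), then applying Assumption~\ref{ass scheutzow}~i) pathwise with $x' = \mathbb E\bar b(x^1_s)$ etc., yields
\begin{equation*}
\mathbb E|x^1_{t\wedge\tau_n} - x^2_{t\wedge\tau_n}|^2 \leq \bar M \int_0^t \mathbb E\bigl[\mathbf 1_{s \leq \tau_n}\bigl(|x^1_s-x^2_s|^2 + A^b_s + A^\sigma_s\bigr)\bigr]\,ds,
\end{equation*}
where $A^b_s := |\mathbb E\bar b(x^1_s)-\mathbb E\bar b(x^2_s)|^2$ and analogously for $A^\sigma_s$. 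By Jensen's inequality, Assumption~\ref{ass scheutzow}~iii) and Cauchy--Schwarz,
\begin{equation*}
A^b_s \leq \bigl(\mathbb E|\bar b(x^1_s) - \bar b(x^2_s)|\bigr)^2 \leq \kappa^2\,\mathbb E\bigl[(1+\sqrt{v(s,x^1_s,\mathscr L(x^1_s))}+\sqrt{v(s,x^2_s,\mathscr L(x^1_s))})^2\bigr]\,\mathbb E|x^1_s-x^2_s|^2 \leq C_1\,\mathbb E|x^1_s-x^2_s|^2,
\end{equation*}
with $C_1$ finite thanks to the uniform bound on $\mathbb E v$; the estimate for $A^\sigma_s$ is analogous. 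Setting $\phi(t) := \mathbb E|x^1_t - x^2_t|^2$ and letting $n \to \infty$ (monotone convergence on the left, using that $V$ in Assumption~\ref{a-int} controls $|x|^2$ to ensure $\phi$ is finite), we arrive at $\phi(t) \leq \bar M(1+2C_1)\int_0^t \phi(s)\,ds$. Since $x^1_0 = x^2_0$ a.s., $\phi(0)=0$ and Gronwall's lemma forces $\phi \equiv 0$, hence $x^1 \equiv x^2$.

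The main obstacle is converting the measure-dependent, $v$-weighted local Lipschitz control on $\bar b$ and $\bar \sigma$ into the linear bound $A^b_s + A^\sigma_s \leq C\phi(s)$: this is precisely the step in which Cauchy--Schwarz, Assumption~\ref{ass scheutzow}~ii)--iii) and the uniform integrability furnished by Theorem~\ref{thm:weakexistence} must interlock. A minor subtlety — also encountered in Theorem~\ref{thm contdep} — is the mismatch between stopped expectation on the left and unstopped integrand on the right of the Gronwall inequality, resolved by the standard localisation-and-Fatou manoeuvre together with the $V$-domination in Assumption~\ref{a-int}.
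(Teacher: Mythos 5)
Your proof is correct and follows essentially the same route as the paper's: both arguments bound the local monotonicity coefficient $M(x',y',x'',y'')$ uniformly via Assumption~\ref{ass scheutzow}~ii) and the uniform-in-time Lyapunov estimate, and both convert the measure-difference terms $|x'-y'|^2$, $|x''-y''|^2$ into a $|x-y|^2$-type term integrated against the coupling by Cauchy--Schwarz and Assumption~\ref{ass scheutzow}~iii). The only organisational difference is that the paper frames this as verifying Assumption~\ref{ass gmc} on the restricted class of measures $\{\mu : \int_D v\, d\mu \leq \sup_t M(t)\}$ (via the disintegration remark) and then invoking Corollary~\ref{corollary uniqueness}, whereas you unfold the It\^o-plus-Gronwall argument directly. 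One remark on your opening rationale: you say you avoid Theorem~\ref{thm contdep} because Assumption~\ref{ass int gmc} must hold for all couplings, but the paper actually reduces to case~i) and Assumption~\ref{ass gmc} (with $\bar v(x)=|x|^2$), not to the integrated condition; and the disintegration bound on $|x'-y'|^2$ is valid for an arbitrary coupling of $\mu$ and $\nu$, so the obstruction you anticipated does not arise. A minor imprecision shared by both proofs: in the Cauchy--Schwarz step the term $\mathbb{E}\, v(s,x^2_s,\mathscr L(x^1_s))$ appears (or $\int v(t,\bar y,\mu)\,\nu(d\bar y)$ in the paper's form), which is not literally the quantity controlled by the Lyapunov bound; this is harmless under the natural reading of Assumption~\ref{ass scheutzow}~iii) but worth flagging.
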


We will need the following observation: if $\pi \in \Pi(\mu, \nu)$ 
then, due to the theorem on disintegration, (see for example~\cite[Theorem 5.3.1]{ambrosio2008}) 
there exists a family $(P_{x})_{x\in D} \subset \mathcal P(D)$ such that
\[
\int_{D\times D} f(x,y)\,\pi(dx,dy) = \int_D \left(\int_D f(x,y)\,P_x(dy)\right)\,\mu(dx)
\] 
for any $f=f(x,y)$ which is a $\pi$-integrable function on $D\times D$.
In particular if $f=f(x)$ then 
\[
\int_{D\times D} f(x)\,\pi(dx,dy) = \int_D f(x)\left(\int_D \,P_x(dy)\right)\,\mu(dx) = \int_D f(x)\,\mu(dx)\,.
\]

\begin{proof}
Our aim is to show that Assumption~\ref{ass gmc} holds, since uniqueness then follows from Corollary~\ref{corollary uniqueness}. 
We know, from Lemma~\ref{lemma:uniform} that for any $t\in I$ we have the estimate \newline
$\int_D v(t,x,\mathscr L(x_t)) \, \mathscr L(x_t)(dx) \leq \sup_{t\in I}M(t)$ and so it suffices to verify~\eqref{eq gmc} for measures $\mu$ such that  
$\int_D v(t,x,\mu)\, \mu(dx) \leq \sup_{t\in I}M(t)$.
From Assumption~\ref{ass scheutzow} i), we have
\[
2(x-y)(B(x,\mu)-B(y,\nu))+|\Sigma(x,\mu) - \Sigma(y,\nu)|^2
\leq M(x',y',x'',y'')[|x-y|^2 + |x'-y'|^2 + |x''-y''|^2]\,,
\]
where $x' = \int_D \bar b(z)\mu(dz)$, $y' = \int_D \bar b(z)\nu(dz)$,
$x'' = \int_D \bar \sigma(z)\mu(dz)$ and $y'' = \int_D \bar \sigma(z)\nu(dz)$.
We note that each of $|x'|$,$|y'|$,$|x''|$ and $|y''|$ are in a compact
subset of $\mathbb R$, due to Assumption~\ref{ass scheutzow}  ii), since 
\[
\kappa\left(1+\int_D v(t,z,\mu)\,\mu(dz)\right) +
\kappa\left(1+\int_D v(t,z,\mu)\,\nu(dz)\right) \leq 2\kappa(1+\sup_{t\in I}M(t)) \,.
\]
\secondrefereee{$\kappa$ and $1$ no longer lost on rhs.}
As $M$ maps bounded sets to bounded sets we can choose a constant $g$ 
sufficiently large so that $M(x',y',x'',y'')\leq g$ for all $\mu, \nu$.

We apply the remark on disintegration 
to see that
\[
|x'-y'|^2 = \left|\int_D \bar b(\bar x)\mu(d\bar x) -  \int_D \bar b(\bar y)\nu(d\bar y)\right|^2
= \left|\int_{D\times D} (\bar b(\bar x) - \bar b(\bar y))\,\pi(dx,d\bar y)\right|^2\,.
\]
From Assumption~\ref{ass scheutzow} iii), we get
\[
\begin{split}
|x'-y'|^2 & \leq \kappa^2  \int_{D \times D}  (1+\sqrt{v(t,\bar x,\mu) } + \sqrt{v(t,\bar y,\mu)})^2\, \pi(d\bar x,d\bar y)  \int_{D \times D}  |\bar x-\bar y|^2 \pi(d\bar x,d\bar y) \\
& \leq 3\kappa^2 (1+2\sup_{t\in I}M(t))\int_{D \times D} |\bar x-\bar y|^2 \pi(d\bar x,d\bar y)\,. 
\end{split}
\]
Since the calculation for  $|x''-y''|^2$ is identical we finally obtain
\[
2(x-y)(B(x,\mu)-B(y,\nu))+|\Sigma(x,\mu) - \Sigma(y,\nu)|^2
\leq g|x-y|^2 + 6 g \kappa^2(1+2 \sup_{t\in I}M(t))\int_{D \times D} |\bar x-\bar y|^2 \pi(d\bar x,d\bar y) 
\]
as required to have Assumption~\ref{ass gmc} satisfied with $\bar v(z)=|z|^2$.
\end{proof}
\bothreferees{Constants cleaned up and lyapunov function $\bar v$ explicitly stated. }
 
\section{Invariant Measures}
 \bothreferees{Section rewritten as the first half was not true as the flow property did not hold for the proposed semigroup due to the measure dependence of the coefficients}
We will establish the existence of a stationary measure for semigroups on $C_b(\cP_2(D))$ associated with the flow of laws of solutions to~\eqref{eq mkvsde} where the coefficients $b$ and $\sigma$ do not depend on $t$, via the Krylov--Bogolyubov Theorem (see \cite[Chapter 7]{DaPrato}). One cannot consider a semigroup acting on $C_b(D)$ due to the measure-dependence of the coefficients. Let the conditions of Theorem \ref{thm:weakexistence} hold with suitable assumptions on $m_1$ and $m_2$ such that we are within the regime where $I=[0,\infty )$.
   
Define the semigroup 
$(\mathscr P_t)_{t\geq 0}$ by 
\begin{equation}
\label{eq semigroup for fns of meas}
\mathscr P_t\phi(\mu)=\phi(\mathscr L(x^\mu_t)) \,\,\, 
\text{for $\phi\in C_b(\cP_2(D))$ and $t\geq 0$.} 	
\end{equation}
Here $x^\mu_t$ denotes a solution to~\eqref{eq mkvsde} started from $x_0\sim \mu$.
To ensure that $\mathscr L(x^\mu_t) \in \mathcal P_2(D)$ we assume that 
the conditions of Theorem~\ref{thm:weakexistence} hold with
$V$ satisfying $V(t,x)\geq |x|^2$.
If $D=\mathbb R^d$ then we can apply the chain rule for functions of measures
from e.g.~\cite{buckdahn2017mean} or~\cite{chassagneux2014classical} to obtain
that for $\phi\in  \mathcal C^{(1,1)}(\mathcal P_2(D))$ 
\begin{equation}
\label{eq fpkeq meas}
\begin{split}
& \phi(\mathscr L(x_t)) - \phi(\mathscr L(x_0)) \\
& = \int_0^t\langle \mathscr L(x^{\mu}_s),  b(\cdot,\mathscr L(x^{\mu}_s)) \partial_\mu \phi(\mathscr L(x^{\mu}_s)) + \text{tr}\left[a(\cdot,\mathscr L(x^{\mu}_s)) \partial_y \partial_\mu \phi(\mathscr L(x^{\mu}_s))\right]\rangle\,ds.		
\end{split}
\end{equation}
In the case that $D\subset \mathbb R^d$ we have to assume that there is $k\in \mathbb N$ such that 
$V(t,x) \geq |x|^{2 }$ for $x\in D\setminus D_k$.
We consider first $x^{k,\mu}$ given by~\eqref{eq mkvsde k} started from $\mu^k$ ($\mu$ restricted to $D_k$ with external mass moved to $0$). 
By Proposition~\ref{propn ito for meas only} we have for 
$\phi\in  \mathcal C^{(1,1)}(\mathcal P_2(D))$ that
\begin{equation}
\label{eq fpkeq meas on k}
\begin{split}
& \phi(\mathscr L(x^k_t)) - \phi(\mathscr L(x^k_0))\\
& = \int_0^t\left\langle \mathscr L(x^{k,\mu}_s),  b(\cdot,\mathscr L(x^{k,\mu}_s)) \partial_\mu \phi(\mathscr L(x^{k,\mu}_s)) + \text{tr}\left[a(\cdot,\mathscr L(x^{k,\mu}_s)) \partial_y \partial_\mu \phi(\mathscr L(x^{k,\mu}_s))\right]\right\rangle\,ds.	
\end{split}	
\end{equation}
From Lemma~\ref{lemma:uniform} we get that $\sup_k \sup_t \mathbb E |x^k_t|^{2 } < \infty$. 
Moreover Lemma~\ref{lemma tightness} implies, together
with Prohorov's theorem convergence of a subsequence of the laws 
(and since we know the limit of these is given by~\eqref{eq mkvsde} due
to the proof of Theorem~\ref{thm:weakexistence}). 
We thus have $W_2(\mathscr L(x^k_t), \mathscr L(x_t)) \to 0$
as $k\to \infty$.
Due to continuity of coefficients $b$, $\sigma$ and since $\phi\in  \mathcal C^{(1,1)}(\mathcal P_2(D))$ we can take the limit $k \to \infty$ 
in~\eqref{eq fpkeq meas on k} to obtain~\eqref{eq fpkeq meas}.

The two main conditions for Krylov--Bogolyubov's theorem to hold is that the 
semigroup is Feller and a tightness condition. 
As we are not assuming any non-degeneracy of the diffusion coefficient we cannot
always guarantee that the semigroup is Feller. 
See, however, Lemma~\ref{cor station meas} for a partial result.
\begin{theorem}\label{cor meas station feller}
Let the conditions of Theorem~\ref{thm:weakexistence} hold with $I=[0,\infty)$,
and $V(t,x)\geq|x|^2$ for $x\in D\setminus D_k$ for some $k\in \mathbb N$. 
If the semigroup $(\mathscr P_t)_{t\geq 0}$ given by~\eqref{eq semigroup for fns of meas} is Feller then there exists an invariant measure.
\end{theorem}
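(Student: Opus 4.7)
The plan is to mirror the proof of Theorem~\ref{cor station feller}, now on the state space $\mathcal P_2(D)$ endowed with the $W_2$ metric, and invoke the Krylov--Bogolyubov theorem. Fix $\mu \in \mathcal P_2(D)$ and let $x^\mu$ be a $v$-integrable solution to~\eqref{eq mkvsde} with $\mathscr L(x^\mu_0)=\mu$. I would form the Cesàro time averages
\[
\Lambda_T := \frac{1}{T}\int_0^T \delta_{\mathscr L(x^\mu_s)}\,ds \in \mathcal P(\mathcal P_2(D)),
\]
and observe that since the flow $s \mapsto \mathscr L(x^\mu_s)$ is deterministic, $\Lambda_T$ is nothing but the occupation measure along this curve in $\mathcal P_2(D)$. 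The two ingredients needed for Krylov--Bogolyubov (see~\cite[Chapter~7]{DaPrato}) are then: (a) the Feller property of $(\mathscr P_t)_{t\geq 0}$, which is a standing hypothesis of the theorem; and (b) tightness of the family $(\Lambda_T)_{T>0}$ in $\mathcal P(\mathcal P_2(D))$.

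To establish (b), I would reduce tightness of $(\Lambda_T)$ to the construction, for each $\varepsilon>0$, of a relatively $W_2$-compact set $\mathcal K_\varepsilon \subset \mathcal P_2(D)$ containing the entire trajectory $\{\mathscr L(x^\mu_s) : s\geq 0\}$. Lemma~\ref{lemma:uniform} provides the key uniform estimate
\[
\sup_{s\geq 0} \mathbb E v(s, x^\mu_s, \mathscr L(x^\mu_s)) \leq \sup_{s\geq 0} M(s) < \infty.
\]
Combined with the defining property~\eqref{eq V ineq}, namely $\int_D V(s,x)\,\mathscr L(x^\mu_s)(dx) \leq \mathbb E v(s, x^\mu_s, \mathscr L(x^\mu_s))$, together with the assumption $V(s,x)\geq |x|^2$ off the compact $D_k$, this yields a uniform second-moment bound on the marginals $\mathscr L(x^\mu_s)$ and hence (via Markov's inequality) tightness of this family in $\mathcal P(D)$ equipped with the weak topology.

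The main obstacle is upgrading this control to genuine $W_2$-compactness in $\mathcal P_2(D)$: a uniform $|x|^2$-moment bound guarantees weak tightness but not the uniform 2-integrability $\lim_{R\to\infty}\sup_s \int_{|x|>R}|x|^2 \mathscr L(x^\mu_s)(dx)=0$ that is required for relative compactness in $W_2$. I would bridge this gap by exploiting the structure of the Lyapunov function: either by showing that $v$ in fact dominates $|x|^{2+\delta}$ for some $\delta>0$ along the trajectory (so that Hölder's inequality gives the needed uniform integrability), or by using the approximating argument already employed in the passage from~\eqref{eq fpkeq meas on k} to~\eqref{eq fpkeq meas}, together with $W_2$-convergence of the cut-off marginals $\mathscr L(x^{k,\mu}_s)\to \mathscr L(x^\mu_s)$ and uniform integrability of $|x|^2$ under the cut-off laws. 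Once $(\Lambda_T)$ is tight, the Krylov--Bogolyubov theorem produces a subsequential weak limit $\Lambda \in \mathcal P(\mathcal P_2(D))$; the Feller property of $(\mathscr P_t)$ then transfers the semigroup-invariance relation $\int \mathscr P_t \phi \,d\Lambda_{T_n} = \int \phi \,d\Lambda_{T_n} + O(t/T_n)$ to the limit, yielding the desired invariant measure.
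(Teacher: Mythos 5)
The paper's proof takes a genuinely different route from yours. Where you try to show directly that the trajectory $\{\mathscr L(x^{\mu}_s):s\ge 0\}$ is relatively $W_2$-compact in $\mathcal P_2(D)$, the paper invokes a criterion from M\'el\'eard: a family $(m_t)_{t\ge 0}\subset \mathcal P(\mathcal P(S))$ is tight if and only if the associated family of \emph{intensity measures} $I(m_t)\in\mathcal P(S)$, defined by $\langle I(m_t),f\rangle=\int_{\mathcal P(S)}\langle\nu,f\rangle\,m_t(d\nu)$, is tight. Computing $I(m^\mu_t)=\frac{1}{t}\int_0^t\mathscr L(x^\mu_s)\,ds$, tightness of the intensity measures reduces to the ordinary tightness estimate $\sup_s\mathbb P[x^\mu_s\notin D_m]<\varepsilon$, which is exactly what Lemma~\ref{lemma tightness} provides. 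This sidesteps the $W_2$-compactness question entirely and never requires control beyond the uniform $V$-moment bound.

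This matters because the obstacle you flag in your own argument is real and, under the stated hypotheses, not bridgeable. The theorem assumes $V(t,x)\ge|x|^2$, which gives only a uniform bound on second moments of $\mathscr L(x^\mu_s)$. As you correctly observe, that is weaker than the uniform 2-integrability $\lim_{R\to\infty}\sup_s\int_{|x|>R}|x|^2\,\mathscr L(x^\mu_s)(dx)=0$ needed for relative $W_2$-compactness. Of your two proposed fixes, the first (domination of $v$ by $|x|^{2+\delta}$) requires strengthening the hypothesis -- the $V\ge|x|^{2+\varepsilon}$ condition appearing in the surrounding text of the paper is introduced only to pass from the cut-off forward equation~\eqref{eq fpkeq meas on k} to~\eqref{eq fpkeq meas} when $D\subsetneq\mathbb R^d$, and is not assumed in the theorem. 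The second fix (using $W_2$-convergence of the cut-off marginals $\mathscr L(x^{k,\mu}_s)\to\mathscr L(x^\mu_s)$) is circular: $W_2$-convergence at each fixed $s$ as $k\to\infty$ gives no uniformity over $s$, which is precisely what is missing. You have thus diagnosed the correct difficulty but not escaped it; the intensity-measure reduction is the missing idea that makes the argument go through with only $V\ge|x|^2$.
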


We will need the following fact from~\cite{meleard1996asymptotic} to prove this theorem:
Let $S$ be a Polish space and $(m_t)_{t\geq 0} $ be a family of probability measures on $\cP(S)$ i.e. $m_t \in \mathcal P(\mathcal P(S))$. 
Define the intensity measure $I(m_t)$ by
\[
\langle I(m_t),f \rangle=\int_{\cP(S)} \langle \nu,f \rangle \, m_t(d\nu)\,,\,\,\,\,
f \in B(S)\,.
\]
Here $B(S)$ denotes all the bounded measurable functions from $S$ to $\mathbb R$.
Then $(m_t)_{t\geq 0}$ is tight if and only if the family of intensity measures $(I(m_t))_{t\geq 0}\subset \cP(S)$ is tight.

\begin{proof}[Proof of Theorem~\ref{cor meas station feller}]
We recall that $\cP_2(D)$ with the Wasserstein distance $W_2$ is Polish~\cite[Theorem 6.18]{villani2009}. 
Fix $\mu \in \mathcal P_2(D)$ and let $x^\mu$ be a solution to~\eqref{eq mkvsde bar}. 
We note that with $\pi_t(\mu, B) := \delta_{\mathscr L(x^\mu_t)}(B)$ we have, 
from~\eqref{eq semigroup for fns of meas}, that
\[
\mathscr P_t \phi(\mu) = \phi(\mathscr L(x^\mu_t))
= \int_{\mathcal P_2(D)} \delta_{\mathscr L(x^\mu_t)}(\nu)\phi(\nu)\,d\nu
= \int_{\mathcal P_2(D)} \phi(\nu)\,\pi_t(\mu,d\nu)\,.
\]
Define the family of measures $(m^\mu_t)_{t\geq 0} \subset \mathcal P(\mathcal P_2(D))$ 
by 
\[
m^\mu_t(B) := \frac1t \int_0^t \mathbb \pi_s(\mu, B)\,ds = \frac1t \int_0^t \delta_{\mathscr L(x_s^\mu)}(B)\,ds\,,\,\,\,\, B \in \mathscr B(\mathcal P_2(D))\,.
\]
To apply the Krylov--Bogolyubov Theorem we need to show that the family $(m_t)_{t\geq 0}$ is tight. 
We observe that for all $f\in B(D)$ we have
\[
\begin{split}
\int_{\mathcal P(D)} \langle \nu,f \rangle \, m^\mu_t(d\nu) =  
\int_{\cP(D)} \langle \nu,f \rangle \, \frac{1}{t}\int_0^t\delta_{\mathscr L (x^\mu_s)}\,(d \nu)\,ds
&= \frac{1}{t}\int_0^t\langle  \mathscr L (x^\mu_s),f \rangle ds\\
&= \left\langle \frac{1}{t}\int_0^t \mathscr L (x^\mu_s)\,ds, f \right\rangle.  
\end{split}
\]
Therefore $I(m^\mu_t) = \frac{1}{t}\int_0^t \mathscr L (x^\mu_s)\,ds$.
It remains to show that family of intensity measures $(I(m_t))_{t\geq 0}\subset \cP(D)$ is tight. 
For $B \in \mathscr B(D)$ we have 
\[
I(m^\mu_t)(B) = \frac{1}{t}\int_0^t\langle  \mathscr L (x^\mu_s), \mathds 1_B \rangle \,ds = \frac{1}{t}\int_0^t  \mathscr L (x^\mu_s)(B) \, ds
= \frac{1}{t}\int_0^t \mathbb P(x^\mu_s \in B)\, ds\,.
\]
By Fatou's Lemma and Lemma~\ref{lemma tightness} we know that for any $\varepsilon > 0$ there exists sufficiently large $m_0$ such that for all $m>m_0$ we have $\sup_{t\in I}\mathbb P[x^\mu_t\notin D_m]<\varepsilon$. 
Therefore $I(m^\mu_t)(D\setminus D_m)=\frac{1}{t}\int_0^t\mathbb P(x^\mu_s\notin D_m)ds<\varepsilon$ and hence $(I(m^\mu_t))_{t\geq 0}$ is tight.
\end{proof}

\begin{lemma}\label{cor station meas}
Let assumptions of Theorem \ref{thm:weakexistence} hold for $I=[0,\infty)$ along with either Assumption~\ref{ass gmc} 
or~\ref{ass int gmc}.
Assume further that 
\[
W_{\bar v}(\mu,\nu)<\infty \,\,\,\text{ for $\mu,\nu$ in }\,\,\,
\mathcal P_{v}(D):=\bigg\{\mu\in \mathcal P(D) : \int_D v(0,x,\mu)\,\mu(dx)<\infty\bigg\}\,.
\]
Then the semigroup $(\mathscr P_t)_{t\geq 0}$ acting on  $C_b(\cP_v(D) )$ and defined as in~\eqref{eq semigroup for fns of meas} is Feller. 
\end{lemma}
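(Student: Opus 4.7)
The plan is to prove Feller continuity by establishing a contraction-type estimate of the form
\[
W_{\bar v}(\mathscr L(x^\mu_t), \mathscr L(x^\nu_t)) \leq C(t)\, W_{\bar v}(\mu, \nu)
\]
for every $\mu,\nu \in \mathcal P_v(D)$, from which continuity of $\mathscr P_t\phi$ follows immediately from the continuity of $\phi$. Boundedness is automatic since $|\mathscr P_t\phi(\mu)| \leq \|\phi\|_\infty$, so the whole content of the lemma is continuity.

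First I would fix $\mu,\nu \in \mathcal P_v(D)$ and, for any $\varepsilon > 0$, pick a coupling $\pi \in \Pi(\mu,\nu)$ such that
\[
\int_{D\times D} \bar v(x-y)\, \pi(dx, dy) \leq W_{\bar v}(\mu,\nu) + \varepsilon;
\]
this is possible because $W_{\bar v}(\mu,\nu) < \infty$ by hypothesis. On a common filtered probability space supporting a Brownian motion $w$, I would construct $\mathcal F_0$-measurable initial data $(x^\mu_0, x^\nu_0)\sim \pi$ and, by Theorem~\ref{thm:weakexistence}, produce $v$-integrable weak solutions $x^\mu, x^\nu$ of~\eqref{eq mkvsde} driven by the same $w$ with these two initial data. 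Pathwise uniqueness from Corollary~\ref{corollary uniqueness} guarantees that $\mathscr L(x^\mu_t)$ and $\mathscr L(x^\nu_t)$ depend only on $\mu$ and $\nu$, so the value of $\mathscr P_t\phi(\mu)$ and $\mathscr P_t\phi(\nu)$ is unambiguously read off from this joint realisation.

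Next I would apply Theorem~\ref{thm contdep} to this pair to obtain
\[
\mathbb E \bar v(x^\mu_t - x^\nu_t) \leq C(t)\, \mathbb E \bar v(x^\mu_0 - x^\nu_0) \leq C(t)\bigl(W_{\bar v}(\mu, \nu) + \varepsilon\bigr),
\]
where $C(t)$ is the exponential factor appearing in~\eqref{eq cont dep} under Assumption~\ref{ass gmc}, or in~\eqref{eq int cont dep} under Assumption~\ref{ass int gmc}; in the latter case the integrability hypothesis~\eqref{eq integrability for uniqueness} is controlled via $\sup_{t\in I}\mathbb E v(t, x^\mu_t, \mathscr L(x^\mu_t)) < \infty$, which is the last statement of Theorem~\ref{thm:weakexistence} (see also Corollary~\ref{corollary bound for limit process}). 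Since $\mathscr L(x^\mu_t, x^\nu_t) \in \Pi(\mathscr L(x^\mu_t), \mathscr L(x^\nu_t))$, the above bounds $W_{\bar v}(\mathscr L(x^\mu_t), \mathscr L(x^\nu_t))$ by $C(t)(W_{\bar v}(\mu,\nu) + \varepsilon)$; letting $\varepsilon \downarrow 0$ removes the slack and gives the desired contraction.

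The Feller property then follows directly: if $W_{\bar v}(\mu_n,\mu) \to 0$ in $\mathcal P_v(D)$, then $W_{\bar v}(\mathscr L(x^{\mu_n}_t), \mathscr L(x^\mu_t)) \to 0$, so continuity of $\phi$ yields $\mathscr P_t\phi(\mu_n) \to \mathscr P_t\phi(\mu)$. The main obstacle is the synchronous-coupling construction: one needs to simultaneously realise the two solutions on a common probability space with a near-optimal initial coupling while preserving the marginal laws produced by the semigroup; this hinges on the existence theorem being applicable to arbitrary initial laws in $\mathcal P_v(D)$ and on pathwise uniqueness, which is where the assumption that either~\ref{ass gmc} or~\ref{ass int gmc} holds becomes essential. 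Once the coupling is in place, the proof is a direct application of Theorem~\ref{thm contdep}.
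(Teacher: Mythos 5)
Your proposal is correct and follows essentially the same route as the paper: both proofs couple the two initial conditions, drive both solutions by the same Brownian motion, invoke Theorem~\ref{thm contdep} to get $\mathbb E\,\bar v(x^{\mu}_t - x^{\nu}_t)\le c_t\,\mathbb E\,\bar v(x^{\mu}_0-x^{\nu}_0)$, and then optimise over the initial coupling to obtain the contraction $W_{\bar v}(\mathscr L(x^{\mu}_t),\mathscr L(x^{\nu}_t))\le c_t\,W_{\bar v}(\mu,\nu)$, from which Feller continuity is immediate. Your version is a little more explicit about the construction of the synchronous coupling and the role of pathwise uniqueness in making $\mathscr P_t$ well-defined, whereas the paper takes the infimum over couplings directly, but there is no substantive difference.
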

Note that here, we are considering a semigroup acting on space of measures possibly different to that previously considered. In the case where $v$ and $\bar v$ are polynomials, one may replace the assumption of the Feller property in Theorem \ref{cor meas station feller} with the assumptions of Lemma \ref{cor station meas} and $W_{\bar v}(\mu,\nu)< \infty$ for any $\mu,\nu\in \cP_v(D)$ is no longer required.

\begin{proof}
Fix $t\in I$ and $\mu_1, \mu_2 \in \mathcal P_{  v}(D)$.
From the continuous dependence on initial condition, Theorem~\ref{thm contdep}, we have 
\begin{equation}\notag
\begin{alignedat}{1}
W_{\bar v}(\mathscr L(x^{\mu_1}_t),\mathscr L(x^{\mu_2}_t))\leq \mathbb  E [ {\bar v}(x^{\mu_1}_t - x^{\mu_2}_t) ] \leq c_t \mathbb E [ {\bar v}(x^{\mu_1}_0 - x^{\mu_2}_0)] = c_t \int_{D\times D} {\bar v}(x-y) \pi(dx,dy) \,.	
\end{alignedat}
\end{equation}
Taking the infimum over all the possible couplings yields,
\begin{equation}
\label{eq feller for meas 1}
\begin{alignedat}{1}
W_{\bar v}(\mathscr L(x^{\mu_1}_t),\mathscr L(x^{\mu_2}_t))\leq c_t W_{\bar v}(\mu_1,\mu_2).
\end{alignedat}
\end{equation} 
Let $\varepsilon > 0$ be given. 
For any $\phi \in C_b(\cP_v(D) )$ and $\mu\in \cP_v(D)$
there is $\delta_{\phi,\mu}$
such that $W_{\bar v}(\mu,\nu) < \delta_{\phi,\mu}$ implies that
$|\phi(\mu) - \phi(\nu)|< \varepsilon$.
Now, by the uniqueness of solutions $x^{\mu}$, for fixed time $t\in I$, $\delta_{\mu_1}(t) :=c_t^{-1} \delta_{\phi,\mathscr L(x^{\mu_1}_t)} $. 
Then, due to~\eqref{eq feller for meas 1}, if $W_{\bar v}(\mu_1,\mu_2) \leq \delta_{\mu_1}(t)$ then $W_{\bar v}(\mathscr L(x^{\mu_1}_t),\mathscr L(x^{\mu_2}_t))<\delta_{\phi,\mathscr L(x^{\mu_1}_t)}$ and we get $|P_t \phi(\mu_1) - P_t \phi(\mu_2)| < \varepsilon$
as required. 
\end{proof}

\secondrefereee{Indeed, it seemed we were claiming uniform continuity and has been changed.}

\section*{Acknowledgements}
We are grateful to Sandy Davie and  X\={i}l\'{i}ng Zh\={a}ng, both from the University of Edinburgh, for numerous discussions on the topic of this work and many helpful suggestions. Moreover, we are indebted to both referees for careful reading of the manuscript and their comments which helped significantly improve the paper.


\appendix

\section{Measure Derivatives of Lions and Associated It\^o Formula }
\label{sec measure derivatives and ito}

For the construction of the measure derivative in the sense of Lions 
we follow the approach from~\cite[Section 6]{cardaliaguet2012}.
There are three main differences: The first difference is that we define the measure derivative in a domain. More precisely we will define the measure derivative for any measure as long as it has support on $D_k \subset D$ for some 
$k\in \mathbb N$ 
(recall that $\overline{D}_k \subset D_{k+1}$ and $\bigcup_k D_k = D$ and every $D_k$ is bounded and open), in practice for the processes $x^k$, this may be $D_{k+1}$.\color{black}
 
This is precisely what is needed for the analysis in this paper.
The second difference is that we are explicit in making it clear 
why the 
measure derivative is independent of the probability space used to realise the measure
as well as the random variable used. 
The third difference is in proving the ``Structure of the gradient'', 
see~\cite[Theorem 6.5]{cardaliaguet2012}. 
Thanks to an observation by Sandy Davie (University of Edinburgh), we can show 
as part iii) of Proposition~\ref{propn L deriv} that the measure derivative
has the right structure even if it only exists at the point $\mu$ instead of 
for every square integrable measure, as is required in~\cite{cardaliaguet2012}. 
The method of Sandy Davie also conveniently results in a much shorter proof. We assume the same regularity as in \cite{chassagneux2014classical}, but less regularity is assumed than in \cite{buckdahn2017mean} following the observations of \cite{Li2017}
\secondrefereee{Differences of regularity added.}

\subsection{Construction of First-Order Lions' Measure Derivative on $D_k\subset D \subseteq \bR^d$}
Consider $u:\mathcal{P}_2(D) \to \mathbb R$. 
Here $\mathcal{P}_2(D)$ is a space of probability measures on $D$ 
that have second moments i.e. $\int_D x^2 \mu (dx) < \infty$ for
$\mu \in \mathcal{P}_2(D)$.
We want to define the derivative at points $\mu \in \mathcal P_2(D)$ such that $\text{supp}(\mu)\subseteq D_k$.
We shall write $\mu \in \mathcal P(D_k)$ if $\mu$ is a probability measure on $D$ with support in $D_k$. 

\begin{definition}[L-differentiability at $\mu \in \mathcal{P}(D_k)$]
\label{def L differentiability}
We say that $u$ is {\em L-differentiable at} $\mu \in \mathcal{P}(D_k)$ if 
there is an atomless
Polish probability space $(\Omega, \mathcal F, \mathbb P)$ 
and an $X \in L^2(\Omega)$ such that 
$\mu = \mathscr{L}(X)$ and the function $U:L^2(\Omega) \to \mathbb R$ given
by $U(Y) := u(\mathscr{L}(Y))$ is Fr\'echet differentiable at $X$. 
We will call $U$ the {\em lift} of $u$.	
\end{definition}
Clearly 
$\text{supp}(X) \subseteq D_k$ for $\mu \in \mathcal P(D_k)$.
We recall that saying $U:L^2(\Omega; D) \to \mathbb R$ is Fr\'echet differentiable at $X$
with $\text{supp}(X)\subseteq D_k$ means that there exists a bounded linear operator $A:L^2(\Omega) \to \mathbb R$ 
such that for  
\begin{equation}\label{eq frechet}
\lim_{\substack{|Y|_2 \to 0 \\ \text{supp}(X+Y)\subseteq D} } \bigg| \frac{U(X+Y)-U(X)}{|Y|_2} - \frac{AY}{|Y|_2} \bigg| = 0\, .
\end{equation}
Note that 
Since $L^2(\Omega)$ is a Hilbert space with the inner product 
$(X,Y) := \mathbb E [XY]$ we can identify $L^2(\Omega)$ 
with its dual $L^2(\Omega)^*$ via this inner product. 
Then the bounded linear operator $A$ defines an element $DU(X) \in L^2(\Omega)$
through \[(DU(X),Y) := AY \quad \forall Y \in L^2(\Omega).\]

\begin{proposition}
\label{propn L deriv}
Let $u$ be L-differentiable at $\mu \in \mathcal P(D_k)$, 
with some atomless $(\Omega, \mathcal F, \mathbb P)$, lift $U$ and
$X\in L^2(\Omega)$ such that $\mu=\mathscr L(X)$.
Let $(\bar \Omega, \bar{\mathcal F}, \bar {\mathbb P})$ 
be an arbitrary atomless, Polish probability space 
which supports $\bar{X} \in L^2(\bar \Omega)$ and on which 
we have the lift $\bar{U}(Y) := u(\mathscr{L}(Y))$.
Then
\begin{enumerate}[i)]
\item The lift $\bar U$ is Fr\'echet differentiable at $\bar X$ with
derivative $D\bar U(\bar X)\in L^2(\bar \Omega)$.
\item The joint law of $(X,DU(X))$ equals that of $( \bar X ,D\bar U(\bar X))$.
\item There is $\xi:D_k \to D_k$ measurable such that
$\int_{D_k} \xi^2(x) \mu (dx) < \infty$ and almost surely,
\bothreferees{Made explicit that the below equalities are almost sure equalities and not sure.}
\[ \xi(X) = DU(X), \quad \xi(\bar X) = D\bar U(\bar X)\,.\]
\end{enumerate}	
\end{proposition}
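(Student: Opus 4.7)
The plan is to prove (iii) first and then deduce (i) and (ii) from it, since the Davie observation — that differentiability at the single point $\mu$ forces the gradient to be a function of $X$ — is precisely what makes the change of probability space clean.

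For part (iii), I would use an invariance argument. Fix any measure-preserving bijection $\tau:\Omega\to\Omega$ with $X\circ\tau=X$ a.s., and any $Y\in L^2(\Omega)$. Because $U$ depends only on the law of its argument, and $\tau$ preserves $\mathbb P$, one has
\[
U(X+\varepsilon Y)=u(\mathscr L(X+\varepsilon Y))=u(\mathscr L((X+\varepsilon Y)\circ\tau))=u(\mathscr L(X+\varepsilon\,Y\circ\tau))=U(X+\varepsilon\,Y\circ\tau).
\]
Differentiating at $\varepsilon=0$ and using the change-of-variables identity $\mathbb E[DU(X)(Y\circ\tau)]=\mathbb E[(DU(X)\circ\tau^{-1})Y]$ gives
\[
(DU(X)-DU(X)\circ\tau^{-1},Y)_{L^2}=0\qquad\forall Y\in L^2(\Omega),
\]
so $DU(X)\circ\tau=DU(X)$ almost surely. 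Thus $DU(X)$ is invariant under every measure-preserving bijection fixing $X$; on an atomless Polish space this forces $\sigma(X)$-measurability, and the Doob--Dynkin lemma then produces a measurable $\xi$ with $\xi(X)=DU(X)$. The bound $\int_{D_k}|\xi(x)|^2\mu(dx)<\infty$ is just the pushforward of $DU(X)\in L^2(\Omega)$.

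For part (i), pick any $\bar Y\in L^2(\bar\Omega)$ and let $\gamma$ be the joint law of $(\bar X,\bar Y)$ on $D_k\times\mathbb R^d$. Since $(\Omega,\mathcal F,\mathbb P)$ is atomless Polish and $\mathscr L(X)=\mu$ is the first marginal of $\gamma$, I can realise $Y\in L^2(\Omega)$ with $(X,Y)\sim\gamma$. Then $\mathscr L(X+Y)=\mathscr L(\bar X+\bar Y)$ and $|Y|_{L^2(\Omega)}=|\bar Y|_{L^2(\bar\Omega)}$, so Fréchet differentiability of $U$ at $X$ transfers to $\bar U$ at $\bar X$:
\[
\bar U(\bar X+\bar Y)-\bar U(\bar X)=U(X+Y)-U(X)=\mathbb E[\xi(X)Y]+o(|Y|_2)=\mathbb E[\xi(\bar X)\bar Y]+o(|\bar Y|_2),
\]
the last equality by $(X,Y)\stackrel{d}{=}(\bar X,\bar Y)$ and measurability of $\xi$. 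Hence $D\bar U(\bar X)=\xi(\bar X)$. Part (ii) is then immediate because $(X,DU(X))=(X,\xi(X))$ and $(\bar X,D\bar U(\bar X))=(\bar X,\xi(\bar X))$ are both the pushforward of $\mu$ under $x\mapsto(x,\xi(x))$.

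The hard step will be the measure-theoretic implication in part (iii): namely, that invariance of $DU(X)$ under all measure-preserving bijections preserving $X$ implies $\sigma(X)$-measurability on an abstract atomless Polish space. I would reduce this to the standard fact that such a space, disintegrated along $X$, has atomless regular conditional probabilities that are measurably isomorphic to $([0,1],\mathcal B,\mathrm{Leb})$; if $DU(X)$ were non-constant on a positive-measure fibre, two level sets of positive conditional measure could be exchanged by a bijection that preserves $X$ and $\mathbb P$, contradicting the invariance. This is exactly where Davie's observation supersedes the classical argument of \cite{cardaliaguet2012}, which instead required L-differentiability at \emph{every} $\nu\in\mathcal P_2(D)$ in order to perturb along specially chosen directions; here, one point $\mu$ is enough.
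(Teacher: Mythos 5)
Your plan has two related gaps, both stemming from the false premise that atomlessness of $\mathbb P$ on $\Omega$ carries over to the regular conditional probabilities of $\mathbb P$ given $X$. That is not a ``standard fact''; it is simply wrong. Take $\Omega=[0,1]$ with Lebesgue measure and $X(\omega)=\omega$: the space is atomless and Polish, but every conditional probability given $X$ is a point mass. This breaks your part~(i) outright: there you ask to realise, on $\Omega$, a variable $Y$ with $(X,Y)\sim\gamma$ for an \emph{arbitrary} coupling $\gamma$ of $\mu=\mathscr L(X)$ and $\mathscr L(\bar Y)$; with $X$ the identity, $(X,Y)$ is forced onto a graph so $\gamma$ cannot be, e.g., Lebesgue on the square. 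It also breaks part~(iii): your exchange argument requires swapping two subsets of a fibre of equal conditional mass, and if the fibre has mixed atomic/atomless structure with unequal masses (e.g.\ $\Omega=\{a,b\}\times[0,1]$, $\mathbb P=(\tfrac13\delta_a+\tfrac23\delta_b)\otimes\text{Leb}$, $X$ the second coordinate) there is no such swap; the function $\mathbf 1_{\{a\}\times[0,1]}$ is invariant under every $\mathbb P$-preserving bijection fixing $X$ yet is not $\sigma(X)$-measurable. So ``invariance under all measure-preserving bijections fixing $X$'' does not, on an arbitrary atomless Polish space, imply $\sigma(X)$-measurability, and Doob--Dynkin cannot be invoked.

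The paper side-steps exactly this by proving the transfer statements~(i) and~(ii) \emph{first}, via Lemma~\ref{lemma tau}: because $X,\bar X$ take values in the compact $D_k$, one gets measure-preserving bijections $\tau_h:\Omega\to\bar\Omega$ with $\|X-\bar X\circ\tau_h\|_\infty<h$, and shows $DU(X)\circ\tau_h^{-1}$ is Cauchy and converges to $D\bar U(\bar X)$; the joint law statement~(ii) follows by $L^2$ convergence. Only \emph{then} is~(iii) proved, and precisely by selecting a deliberately nice space $D_k\times S^1$ with $\mu\otimes\lambda$, where the fibres of the coordinate map \emph{are} atomless so the rotation-invariance argument you have in mind really does force independence of the $S^1$ coordinate; part~(ii) then transports the resulting $\xi$ back to the original $(\Omega,X)$. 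In short: your key observation (invariance forces the derivative to be a function of $X$) is in fact the engine of the paper's step~(iii), but it only works on a chosen auxiliary space, and you need the change-of-space results of~(i)/(ii) to get there and back. Your proposed order~(iii)$\to$(i)$\to$(ii) is circular once one notices the hidden dependence on the shape of the conditional probabilities.
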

Once this is proved we will know that the notion of L-differentiability
depends neither on the probability space used nor on the random variable used.
Moreover the function $\xi$ given by this proposition is again independent 
of the probability space and random variable used. 

\begin{definition}[L-derivative of $u$ at $\mu$]
\label{def L derivative}
If $u$ is L-differentiable at $\mu$ then we write 
$\partial_\mu u(\mu):= \xi$,
where $\xi$ is given by Proposition~\ref{propn L deriv}. 
Moreover we have $\partial_\mu u: \mathcal P_2(D_k) \times D_k \to D_k$ given by
\[
\partial_\mu u(\mu, y) := [\partial_\mu u(\mu)](y)\,.
\]
\end{definition}

To prove Proposition~\ref{propn L deriv} we will need the following result:
\begin{lemma}
\label{lemma tau}
Let $(\Omega, \mathcal F, \mathbb P)$ and $(\bar \Omega, \bar{\mathcal F}, \bar {\mathbb P})$ 
be two atomless, Polish probability spaces supporting $D_k$-valued random variables  $X$ and $\bar X$ such that
$\mathscr{L}(X)	=\mathscr{L}(\bar X)$.
Then for any $\epsilon > 0$ there exists $\tau:\Omega \to \bar \Omega$ 
which is bijective, such that both $\tau$ and $\tau^{-1}$ are measurable
and measure preserving and moreover
\[
|X -  \bar X \circ \tau |_\infty   < \epsilon \,\,\, \text{ and }\,\,\, 
|X \circ \tau^{-1} - \bar X|_\infty   < \epsilon\,.
\]
\end{lemma}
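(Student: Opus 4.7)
My plan is to exploit the fact that every atomless Polish probability space is isomorphic, modulo null sets, to $([0,1],\mathscr{B}([0,1]),\lambda)$ (Rokhlin's isomorphism theorem), and to transfer this to obtain the required bijection locally on corresponding level sets. The essential geometric input is that $D_k$ is bounded, so it can be partitioned into small pieces whose preimages under $X$ and $\bar X$ have equal $\mathbb{P}$- and $\bar{\mathbb{P}}$-measure respectively.

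First I would fix $\epsilon > 0$ and cover $\bar D_k$ by finitely many balls of radius $\epsilon/4$; by a standard disjointification this produces a finite Borel partition $D_k = \bigsqcup_{i=1}^N A_i$ with $\mathrm{diam}(A_i) < \epsilon$ for each $i$. Setting $E_i := X^{-1}(A_i) \subset \Omega$ and $F_i := \bar X^{-1}(A_i) \subset \bar\Omega$, the assumption $\mathscr{L}(X) = \mathscr{L}(\bar X)$ gives $\mathbb{P}(E_i) = \bar{\mathbb{P}}(F_i) =: p_i$ for every $i$.

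The next step is to construct, for each $i$ with $p_i > 0$, a bijection $\tau_i : E_i \to F_i$ such that both $\tau_i$ and $\tau_i^{-1}$ are measurable and measure preserving (for the normalised measures $\mathbb{P}(\cdot)/p_i$ and $\bar{\mathbb{P}}(\cdot)/p_i$, hence also for the unnormalised ones). For this I would invoke Rokhlin's isomorphism theorem together with the fact that the restriction of an atomless Polish probability space to a set of positive measure is again an atomless standard probability space: both $(E_i, \mathcal F \cap E_i, \mathbb{P}(\cdot)/p_i)$ and $(F_i, \bar{\mathcal F} \cap F_i, \bar{\mathbb{P}}(\cdot)/p_i)$ are isomorphic mod null sets to $([0,1], \lambda)$, and the composition of these isomorphisms yields $\tau_i$. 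Gluing gives $\tau := \bigsqcup_i \tau_i : \Omega \to \bar\Omega$, a measurable, measure-preserving bijection (possibly modulo null sets, which can be absorbed by redefining on a null set, e.g.\ using that both spaces are atomless to exchange null sets by an arbitrary measurable bijection).

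Finally I would verify the $L^\infty$ estimate: for $\omega \in E_i$, by construction $X(\omega) \in A_i$ and $\tau(\omega) \in F_i$, so $\bar X(\tau(\omega)) \in A_i$ as well; since $\mathrm{diam}(A_i) < \epsilon$ this yields $|X(\omega) - \bar X(\tau(\omega))| < \epsilon$ on every $E_i$, hence $\mathbb{P}$-a.s., and the symmetric argument applied to $\tau^{-1}$ gives $|X \circ \tau^{-1} - \bar X|_\infty < \epsilon$. The main obstacle is the appeal to the isomorphism theorem: one must be careful that atomlessness survives restriction to positive-measure sets and that the maps thus obtained can be modified on null sets to become genuine bijections of $\Omega$ onto $\bar\Omega$; this is where the ``modulo null sets'' bookkeeping concentrates, but it is standard and does not affect the almost-sure statements we need elsewhere in the appendix.
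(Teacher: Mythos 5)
Your proof follows essentially the same route as the paper's: partition $D_k$ into pieces of diameter less than $\epsilon$, pull back under $X$ and $\bar X$ to get matching partitions of $\Omega$ and $\bar\Omega$ with equal masses, invoke the classification of atomless standard probability spaces (you cite Rokhlin, the paper cites Halmos~\cite[Sec.~41, Theorem~C]{hamlos1950} --- the same result) to build measure-preserving bijections piecewise, and glue. The only cosmetic differences are that you use a finite partition via compactness whereas the paper allows a countable one, and you are slightly more explicit about the ``modulo null sets'' bookkeeping needed to upgrade the isomorphism to a genuine pointwise bijection; neither changes the substance.
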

\begin{proof}
Let $(A_n)_n$ be a measurable partition of $D_k$ such that
$\text{diam}(A_n) < \epsilon$.
Let
\[
B_n := \{X \in A_n\},\quad \bar B_n := \{\bar X \in A_n\}\,.
\] 	
These form measurable partitions of $\Omega$ and $\bar \Omega$ respectively 
and moreover $\mathbb P(B_n) = \bar{\mathbb P}(\bar B_n)$.
As the probability spaces are atomless, there exist
$\tau_n :B_n \to \bar B_n$ bijective, such that $\tau_n$ and $\tau_n^{-1}$
are measurable and measure preserving. 
See~\cite[Sec. 41, Theorem C]{hamlos1950} for details
.
Let
\[
\tau(\omega) := \tau_n(\omega)\,\,\, \text{if $\omega \in B_n$},\quad \tau^{-1}(\bar \omega) := \tau_n^{-1}(\bar \omega)\,\,\, \text{if $\bar \omega \in \bar B_n$} \,.
\]
We can see that these are measurable, measure preserving bijections. Now consider
$\omega \in B_n$. Then $\tau(\omega) = \tau_n(\omega) \in \bar B_n$.
But then $X(\omega) \in A_n$ and $\bar{X}(\tau(\omega)) \in A_n$ too.
Hence 
\[
|X(\omega) - \bar{X}(\tau(\omega))| < \epsilon \quad \forall \omega \in \Omega\,.
\]
The estimate for the inverse is proved analogously.
\end{proof}

We use the notation $L^2:=L^2(\Omega)$ and $\bar L^2:= L^2(\bar \Omega)$.

\begin{proof}[Proof of Proposition~\ref{propn L deriv}, part i)]
For any $h>0$ we have $\tau_h, \tau_h^{-1}$ given by Lemma~\ref{lemma tau}
measure preserving and such that $|X -  \bar{X}\circ\tau_h |_\infty < h$.
This means that $|X - \bar{X}\circ\tau_h |_2 < h$ and we have the analogous
estimate with $\tau_h^{-1}$.
Our first aim is to show that $(DU(X)\circ \tau_h^{-1})_{h>0}$ is a
Cauchy sequence in $\bar L^2$.

Fix $\epsilon > 0$. 
Then $\exists \,\delta > 0$ such that we have
\[
|U(X+Y) - U(X) - (DU(X),Y)| < \frac{\epsilon}{2}|Y|_2 \quad \text{for all}\,\,\,|Y|_2 < \delta\,\,\, \text{and}\,\,\,\text{supp}(X+Y)\subseteq D  \,,
\] 	
since $U$ is Fr\'echet differentiable at $X$.
Fix $h, h' < \delta/2$ and consider $|\bar Y|_2 < \delta / 2$ and $\text{supp}(\bar X+\bar Y)\subseteq D$\,.
Then, since the maps $\tau_h^{-1}$ are measure preserving, we have
\[
(DU(X)\circ \tau_h^{-1}, \bar Y) = (DU(X), \bar Y \circ \tau_h)\,.
\]
Note that the inner product on the left is in $\bar L^2$ but the one on the right
is in $L^2$. 
This will not be distinguished in our notation.
Let $Z_h := \bar Y \circ \tau_h - X + \bar X\circ \tau_h$.
Then $|Z_h|_2 \leq |\bar Y|_2 + |\bar X\circ \tau_h - X|_2 <\delta$ and since
$\text{supp}(\bar X + \bar Y) \subseteq D$, we have $\text{supp}(X+ Z_h )\subseteq D$.
Moreover
\begin{equation*}
\begin{split}
 (DU(X)\circ \tau_h^{-1} & - DU(X)\circ \tau_{h'}^{-1},\bar Y)
=  (DU(X), Z_h) - (DU(X),Z_{h'})\\
& + (DU(X), \bar X \circ \tau_h - X) + (DU(X), X - \bar X \circ \tau_{h'} )\\	
= & - U(X+Z_h) + U(X) + (DU(X),Z_h) + [U(X+Z_h) - U(X)]\\
 & + U(X+Z_{h'}) - U(X) - (DU(X),Z_{h'}) - [U(X+Z_{h'}) - U(X)]\\
& + (DU(X), \bar X \circ \tau_h - X) + (DU(X), X - \bar X \circ \tau_{h'} )\,.   
\end{split}
\end{equation*}
But as $\tau_h$ is measure preserving and $U$ and $\bar U$ only depend on the
law, we have 
\[
U(X+Z_h) = U(\bar Y \circ \tau_h + \bar X\circ \tau_h) = \bar U(\bar Y + \bar X) 
= U(X+Z_{h'}).
\]
Hence
\[
\begin{split}
|(DU(X)\circ \tau_h^{-1} & - DU(X)\circ \tau_{h'}^{-1},\bar Y)|
\leq \frac{\epsilon}{2}|Z_{h'}|_2 + \frac{\epsilon}{2}|Z_{h}|_2 + 2|DU(X)|_2\max(h,h') \, \\
& \leq \epsilon |Y|_2 + \epsilon \max(h,h') + 2|DU(X)|_2\max(h,h') \,.	
\end{split}
\]
This means that 
\begin{equation*}
\begin{split}
& |DU(X)\circ \tau_h^{-1} - DU(X)\circ \tau_{h'}^{-1}|_2\\	
& = \sup_{|\bar Y|_2 = \delta/2} \frac{|(DU(X)\circ \tau_h^{-1} - DU(X)\circ \tau_{h'}^{-1},\bar Y)|}{|\bar Y|_2}\leq \epsilon + (2 \epsilon
+ 4|DU(X)|_2) \frac{\max(h,h')}{\delta}\,.
\end{split}
\end{equation*}
Since we can choose $h,h' < \tfrac{\delta}{2}$ and 
also $h,h' < \tfrac{\epsilon \delta}{4|DU(X)|_2}$ we have the required estimate and see
that $(DU(X)\circ \tau_h^{-1})_{h>0}$ is a Cauchy sequence in $\bar L^2$.
Thus, there is $\psi \in \bar L^2$ such that 
\[DU(X)\circ \tau_h^{-1}\to \psi \,\,\, \text{as} \,\,\, h \searrow 0.\]

The next step is to show that $\bar U$ is Fr\'echet differentiable at $\bar X$ 
and $\psi = D\bar U(\bar X)$.
To that end we note that
$\bar U(\bar X + \bar Y) = U(X+Z_h)$ and 
\[
(DU(X),\bar Y\circ \tau_h) = (DU(X),Z_h) + (DU(X),X- \bar X \circ \tau_h ).
\]
Hence
\begin{equation*}
\begin{split}
& |\bar U(\bar X + \bar Y) - \bar U(\bar X) - (\psi,\bar Y)|\\
& = |U(X+Z_h) - U(X) - (DU(X),\bar Y\circ \tau_h) 
+ (DU(X),\bar Y \circ \tau_h) - (\psi,\bar Y)|\\
& \leq \varepsilon|Z_h|_2 + |DU(X)|_2 h + |DU(X)\circ \tau_h^{-1} - \psi||\bar Y|_2
 \leq 4 \epsilon |\bar Y|_2, 
\end{split}	
\end{equation*}
for $h$ sufficiently small. 
Thus $\bar U$ is differentiable at $\bar X$ and  $\psi = D\bar U(\bar X)\in \bar L^2$.
\end{proof}

\begin{proof}[Proof of Proposition~\ref{propn L deriv}, part ii)]
We first note that 
\[
\mathscr{L}(X\circ \tau_h^{-1}, DU(X) \circ \tau_h^{-1}) = \mathscr{L}(X, DU(X))
\]
since the mapping $\tau_h^{-1}$ is measure preserving.
Moreover 
\[
\text{$(X\circ \tau_h^{-1}, DU(X) \circ \tau_h^{-1}) 
\to (\bar X, D \bar U((\bar X)))$ in $L^2(\bar \Omega; \mathbb R^{2d})$ as $h\searrow 0$.}
\]
Hence we get that 
$\mathscr{L}(X,DU(X))= \mathscr{L}(\bar X,D\bar U(\bar X))$.
\end{proof}

\begin{proof}[Proof of Proposition~\ref{propn L deriv}, part iii)]
Note that $\mu$ is not necessarily atomless. Let us first consider the case where it is. 
Then, equipping $(\tilde \Omega,\tilde \cF):=(D_k,\cB(D_k))$ with $\tilde \P:=\mu$, this probability space is atomless and the canonical random element $\tilde X(x):=x$ has law $\mu$. 
Further, there is an $L^2(D_k)$ random element $D\tilde U(\tilde X)$ that is the Fr\'echet derivative of the lift $\tilde U$ at $\tilde X$. Setting $\xi(x)=D\tilde U(\tilde X)(x)$, then by the by uniqueness of the joint distribution, for any probability space $(\bar \Omega, \bar \cF, \bar \P)$ with $\bar X\in L^2(\bar \Omega)$ s.t. $\mathscr L(\bar X)=\mu$, we have  $\xi(\bar X)=D\bar{U}(\bar X)$ almost surely. 

To deal with the case where $\mu$ is not necessarily atomless, we take $\lambda$, the translation invariant measure on $(S^1,\mathscr{B}(S^1))$, with $S^1$ denoting the unit circle. Then, the probability space 
$(\tilde \Omega,\tilde \cF,\tilde \P):=( D_k \times S^1, \mathscr{B}(D_k)\otimes \mathscr{B}(S^1),
\mu \otimes \lambda)$ is atomless.
Let $\tilde L^2$ denote the space of square integrable random variables on this probability space. 
The random variable $\tilde X(x,s):=x$ is in $\tilde L^2$ and has law $\mu$.
With the usual lift $\tilde U$ we know, from part i), 
that $D\tilde U(\tilde X)$ exists in $\tilde L^2$.

For all $t\in S^1$, define the translation operator on $\tilde L^2$, by $R_t\tilde Z(x,s)=\tilde Z(x,s-t)$. Clearly, $R_t\tilde X=\tilde X$. Moreover, $\mathscr L(R_t\tilde Z)=\mathscr L(\tilde Z)$ for all $\tilde Z\in \tilde L^2$ since, by translation invariance of $\lambda$, 
$$ \int_{\tilde \Omega}\1_{B}(R_tZ(x,s))\lambda\otimes\mu(ds,dx)=  \int_{D_k}\int_{S^1}\1_{B}(Z(x,s-t))\lambda(ds)\mu(dx)
=  \int_{D_k}\int_{S^1}\1_{B}(Z(x,s))\lambda(ds)\mu(dx)
.$$ Since the lift $\tilde U(\tilde Z)$ depends only on the distribution of the random element $\tilde Z$, $\tilde U(\tilde Z)=\tilde U(R_t\tilde Z)$ and so $\tilde U(\tilde X+R_t\tilde Z)=\tilde U(\tilde X+\tilde Z)$. Then by uniqueness of the Fr\'echet derivative, recalling equation \eqref{eq frechet}, one can conclude that $\tilde A=\tilde AR_t$ on $\tilde L^2$, for all $t\in S^1$. Therefore, for all $t\in S^1$ and $\tilde Y\in \tilde L^2$, $\tilde AR_t(\tilde Y)=\tilde A(R_t\tilde Y)=\tilde A\tilde Y$ and so,
\begin{equation}\notag
\begin{split}
(D\tilde U(\tilde X),\tilde Y) & =\tilde A\tilde Y   =\tilde AR_t\tilde Y=(D\tilde U(\tilde X),R_t\tilde Y)\\
& =\int_{D_k}\int_{S^1}D\tilde U(\tilde X)(x,s) R_{t} \tilde Y(x,s)\lambda(ds)\mu(dx)\\
& =\int_{D_k}\int_{S^1}R_{-t}D\tilde U(\tilde X)(x,s)  \tilde Y(x,s)\lambda(ds)\mu(dx)=(R_{-t}D\tilde U(\tilde X),\tilde Y).
\end{split}
\end{equation}
Hence, $\tilde \xi(x,s):=D\tilde U(\tilde X)(x,s)$ does not depend on $s$ for $x$ in the support of $\mu$. Write $\xi(x):=\tilde \xi(x,s_0)$ for some $s_0\in S^1$. Then,  
\[
1 = \mu\otimes \lambda \left(\xi(x) = D\tilde U(\tilde X)(x,s)\right) 
= \tilde{\mathbb P}\left(\xi(\tilde X) = D\tilde U (\tilde X)\right)= \bar{\mathbb P}\left(\xi(\bar X) = D\bar U (\bar X)\right)
\]
since $\mathscr{L}(\bar X,D\bar U(\bar X))= \mathscr{L}(\tilde X ,D\tilde U(\tilde X))$ due to part ii) and $\xi(\bar X) = D\bar U(\bar X)$ $\bar{\mathbb P}$-a.s. as required.
\end{proof}
\secondrefereee{Explanation of why $\xi(x,s)=\xi(x)$ added.}

\subsection{Higher-Order Derivatives}
We observe that if $\mu$ is fixed then $\partial_\mu u(\mu)$ is a function
from $D_k\to D_k$.
If, for $y \in D_k$, $\partial_y \left[ \partial_\mu u(\mu)(y)_j \right]$  
exists for each $j=1,\ldots,d$ then 
$\partial_y \partial_\mu u:\mathcal P(D_k) \times D_k \to  D_k \times D_k$ 
is the matrix
\[
\partial_y \partial_\mu u(\mu,y) 
:= \big(\partial_y \left[ \partial_\mu u(\mu)(y)_j \right]\big)_{j=1,\ldots,d} \,\,.
\]

If we fix $y\in D_k$ then $\partial_\mu u(\cdot)(y)$ is a function
from $\mathcal P(D_k) \to D_k$.
Fixing $j=1,\ldots,d$, if
$\partial_\mu u(\cdot)(y)_j : \mathcal P(D_k) \to \mathbb R$ is
L-differentiable at some $\mu$ then its L-derivative is the function
given by part iii) of Proposition~\ref{propn L deriv}, namely
$\partial_\mu \big(\partial_\mu u(\mu)(y)_j \big):D_k \to D_k$.
The second order derivative in measure thus constructed is
$\partial_\mu^2 : \mathcal P(D_k) \times D_k \times D_k \to D_k \times D_k$ given by
\[
\partial_\mu^2(\mu, y, \bar y) := \Big( \partial_\mu \big(\partial_\mu u(\mu,y)_j \big)(\bar y) \Big)_{j=1,\ldots,d}\,\,.
\]

\subsection{It\^o Formula for Functions of Measures}
Assume we have a filtered probability space $(\Omega, \mathcal F, \mathbb P)$ with filtration $(\mathcal F_t)_{t\geq 0}$ satisfying the usual conditions supporting an $(\mathcal F_t)_{t\geq 0}$-Brownian motion $w$ and adapted processes $b$ and $\sigma$ satisfying appropriate integrability conditions. 
We consider the It\^o process  
\[
dx_t = b_t \, dt + \sigma_t dw_t, \,\,\,x_0 \in L^2(\mathcal F_0)
\]
which satisfies $x_t \in D_k$ for all $t$ a.s.

\begin{definition}
\label{def a5}
We say that $u:\mathcal P_2(D) \to \mathbb R$ is in
$\mathcal C^{(1,1)}(\mathcal P_2(D))$ if there is a continuous version of
$y \mapsto \partial_\mu u(\mu)(y)$ such that the mapping 
$\partial_\mu u : \mathcal P_2(D) \times D \to D$ is jointly 
continuous at any $(\mu,y)$ s.t. $y\in \text{supp}(\mu)$ and such that 
$y\mapsto \partial_\mu u(\mu,y)$ is continuously differentiable and its derivative
$\partial_y \partial_\mu u:\mathcal P_2(D) \times D \to  D \times D$ 
is jointly continuous at any $(\mu,y)$ s.t. $y\in \text{supp}(\mu)$.
\end{definition}

The notation $\mathcal C^{(1,1)}$ is chosen to emphasise that we can 
take one measure derivative which is again differentiable (in the usual sense) with respect to the new free variable that arises. 
Note that in~\cite{chassagneux2014classical} such functions are called
partially $\mathcal C^2$.

\begin{proposition}
\label{propn ito for meas only}
Assume that 
\[
\mathbb E \int_0^\infty |b_t|^2 + |\sigma_t|^4 \, dt < \infty\,.
\]	
Let $u$ be in $\mathcal C^{(1,1)}(\mathcal P_2(D))$ 
such that for any compact subset $\mathcal K \subset \mathcal P_2(D)$ 
\begin{equation}
\label{eq ito integrability condition}
\sup_{\mu \in \mathcal K} \int_{D} \left[|\partial_\mu u(\mu)(y)|^2 
+ |\partial_y \partial_\mu u(\mu)(y)|^2 \right]\mu(dy) < \infty \,.	
\end{equation}	
Then, for $\mu_t := \mathscr L(x_t)$,
\[
\begin{split}
u( \mu_t) = & u(\mu_0) 
+ \int_0^t \mathbb E\left[ b_s \partial_\mu u(\mu_s)(x_s) + \frac{1}{2}\text{tr}\left[ \sigma_s \sigma_s^* \partial_y \partial_\mu u(\mu_s)(x_s)\right]\right]\,ds\,.
\end{split}
\]
\end{proposition}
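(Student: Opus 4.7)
The plan is to establish the formula by a time-discretization argument, exploiting the lift $U:L^2(\Omega;D)\to \mathbb R$, $U(Y):=u(\mathscr L(Y))$, together with the structural identity $DU(X)=\partial_\mu u(\mathscr L(X))(X)$ supplied by Proposition~\ref{propn L deriv}. Fix $t\in[0,\infty)$ and a partition $0=t_0<t_1<\cdots<t_n=t$ with mesh $|\pi|\to 0$. First I would write the telescoping identity
\[
u(\mu_t)-u(\mu_0)=\sum_{i=0}^{n-1}\bigl[u(\mu_{t_{i+1}})-u(\mu_{t_i})\bigr],
\]
and then, on each subinterval, apply the fundamental theorem of calculus to the linear interpolation $X_\lambda^i:=x_{t_i}+\lambda\,\Delta_i$ with $\Delta_i:=x_{t_{i+1}}-x_{t_i}=\int_{t_i}^{t_{i+1}}b_s\,ds+\int_{t_i}^{t_{i+1}}\sigma_s\,dw_s$. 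Using $DU(X_\lambda^i)=\partial_\mu u(\mathscr L(X_\lambda^i))(X_\lambda^i)$, this yields
\[
u(\mu_{t_{i+1}})-u(\mu_{t_i})=\int_0^1 \mathbb E\bigl[\partial_\mu u(\mathscr L(X_\lambda^i))(X_\lambda^i)\,\Delta_i\bigr]\,d\lambda.
\]

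Next I would Taylor-expand the integrand inside the $\lambda$-integral around $(\mu_{t_i},x_{t_i})$, writing
\[
\partial_\mu u(\mathscr L(X_\lambda^i))(X_\lambda^i)=\partial_\mu u(\mu_{t_i})(x_{t_i})+\lambda\,\partial_y\partial_\mu u(\mu_{t_i})(x_{t_i})\,\Delta_i+R_\lambda^i,
\]
where $R_\lambda^i$ collects the remainder, including the contribution of the measure-variable shift $\mathscr L(X_\lambda^i)\to\mu_{t_i}$. Substituting the split $\Delta_i=\int b+\int \sigma\,dw$, the first-order drift term multiplied by $\partial_\mu u(\mu_{t_i})(x_{t_i})$ contributes the Riemann sum approximating $\int_0^t \mathbb E[b_s\,\partial_\mu u(\mu_s)(x_s)]\,ds$. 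The martingale piece, paired with the $\lambda\,\partial_y\partial_\mu u(\mu_{t_i})(x_{t_i})\Delta_i$ term, produces by It\^o isometry the Riemann sum converging to $\tfrac12\int_0^t \mathbb E[\mathrm{tr}(\sigma_s\sigma_s^*\,\partial_y\partial_\mu u(\mu_s)(x_s))]\,ds$ (noting $\int_0^1\lambda\,d\lambda=\tfrac12$). Crossed terms between the drift and martingale parts, and the zeroth-order martingale term, vanish in expectation by the $\mathcal F_{t_i}$-measurability of the evaluated derivatives.

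Convergence of the Riemann sums and negligibility of $R_\lambda^i$ are controlled as follows. The Burkholder--Davis--Gundy inequality and the hypothesis $\mathbb E\int_0^\infty(|b_s|^2+|\sigma_s|^4)ds<\infty$ give $\mathbb E|\Delta_i|^4=O(|\pi|^2)$, so the remainder in expectation is $o(1)$ per step and $o(1)$ after summation. Joint continuity of $\partial_\mu u$ and $\partial_y\partial_\mu u$ on $\mathcal P_2(D)\times D$ at points $(\mu,y)$ with $y\in\mathrm{supp}(\mu)$, together with continuity of $s\mapsto \mu_s$ in $W_2$ (inherited from $L^2$-continuity of $t\mapsto x_t$) and the integrability condition~\eqref{eq ito integrability condition} applied on the compact set $\mathcal K:=\{\mu_s:s\in[0,t]\}\subset\mathcal P_2(D)$, justify dominated convergence as $|\pi|\to 0$.

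The main obstacle I expect is the control of the remainder $R_\lambda^i$, and specifically the term arising from the shift $\mathscr L(X_\lambda^i)\neq\mu_{t_i}$ in the measure argument of $\partial_\mu u$; we do not have a second L-derivative available. The resolution is that this contribution is, by the joint continuity of $(\mu,y)\mapsto \partial_\mu u(\mu)(y)$ on $\mathcal K\times D$, a negligible modulus-of-continuity term times $\Delta_i$, which by Cauchy--Schwarz and the $L^2$-bound on $\Delta_i$ sums to $o(1)$; no second measure derivative is actually needed. A minor technical point is to ensure that all $X_\lambda^i$ take values in some $D_{k'}\Subset D$ uniformly in $i,\lambda$, which can be arranged by localising via the sub-domains $D_k$ and using the assumption that $x$ is $D_k$-valued for some $k$.
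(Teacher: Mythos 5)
Your argument takes a genuinely different route from the paper. The paper's proof approximates $\mu_t$ by the empirical measure of $N$ independent copies of the Itô process, uses the explicit form of $\partial_\mu$ evaluated at an empirical measure (the identification with ordinary partial derivatives, \cite[Prop.~3.1]{chassagneux2014classical}), applies the classical finite-dimensional Itô formula to the $N$-particle system and lets $N\to\infty$, as in \cite[Thm.~3.5]{chassagneux2014classical}. You instead discretize in time and work with the lift $U$ on $L^2$, applying the fundamental theorem of calculus along the affine interpolation $X_\lambda^i$ on each step; this sidesteps the particle system entirely and is closer in spirit to the Buckdahn--Li--Peng--Rainer line of argument.

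There is, however, a genuine gap in your control of the measure-shift remainder. Set
$R_\lambda^{i,\mathrm{meas}}:=\bigl[\partial_\mu u(\mathscr L(X_\lambda^i))-\partial_\mu u(\mu_{t_i})\bigr](X_\lambda^i)$.
You propose to bound $|R_\lambda^{i,\mathrm{meas}}|$ by a modulus of continuity $\omega$ evaluated at $W_2(\mathscr L(X_\lambda^i),\mu_{t_i})\le \|\Delta_i\|_{L^2}=O(|\pi|^{1/2})$ and then pair with $\Delta_i$ via Cauchy--Schwarz. Per step this gives $\omega(|\pi|^{1/2})\,\mathbb E|\Delta_i|=O\bigl(\omega(|\pi|^{1/2})\,|\pi|^{1/2}\bigr)$, and summing over the $O(|\pi|^{-1})$ steps yields $O\bigl(\omega(|\pi|^{1/2})\,|\pi|^{-1/2}\bigr)$, which does \emph{not} vanish as $|\pi|\to 0$ for any modulus with $\omega(r)\gtrsim r$ --- in particular not even for Lipschitz dependence of $\partial_\mu u$ on $\mu$. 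So ``Cauchy--Schwarz plus the $L^2$-bound on $\Delta_i$'' is not enough, and this is exactly the $O(|\pi|^{1/2})$-sized martingale piece that must be killed by cancellation, not by size. The fix is available within your framework: note that $\mathscr L(X_\lambda^i)$ is deterministic, so $g_\lambda^i(y):=\partial_\mu u(\mathscr L(X_\lambda^i))(y)-\partial_\mu u(\mu_{t_i})(y)$ is a deterministic function and $g_\lambda^i(x_{t_i})$ is $\mathcal F_{t_i}$-measurable. Writing $R_\lambda^{i,\mathrm{meas}}=g_\lambda^i(x_{t_i})+[g_\lambda^i(X_\lambda^i)-g_\lambda^i(x_{t_i})]$, the first summand paired with $\int_{t_i}^{t_{i+1}}\sigma_s\,dw_s$ has zero expectation, its pairing with the drift part is $O(\omega(|\pi|^{1/2})|\pi|)$ per step, and the second summand is $O\bigl(\omega'(|\pi|^{1/2})\,|\Delta_i|\bigr)$ by the mean value theorem and the $\mu$-continuity of $\partial_y\partial_\mu u$; after pairing with $\Delta_i$ and summing, everything is then $O(\omega(|\pi|^{1/2}))+O(\omega'(|\pi|^{1/2}))\to 0$. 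Without this extra decomposition the remainder does not close.

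A second, smaller issue: $X_\lambda^i=(1-\lambda)x_{t_i}+\lambda x_{t_{i+1}}$ need not lie in $D$ (or in $D_k$) unless $D_k$ is convex, so $\mathscr L(X_\lambda^i)$ may leave the domain where $u$ and $\partial_\mu u$ are defined. Pathwise continuity of $x$ makes $|\Delta_i|$ small only for $\omega$-dependent meshes, whereas your partition is deterministic, so this requires an explicit localisation (stop on exit of a slightly larger convex neighbourhood, or assume w.l.o.g.\ the $D_k$ convex) rather than the appeal to small increments you sketch.
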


Note that since we are assuming that the process $x$ never leaves some $D_k$, 
we have $\text{supp}(\mu_t) \subseteq D_k$ \color{black}for all times $t$.
The proof relies on replacing $\mu_t$ by an approximation arising as
the empirical measure of $N$ independent copies of the process $x$. 
For marginal empirical measures there is a direct link between measure derivatives 
and partial derivatives, see~\cite[Proposition 3.1]{chassagneux2014classical}.
One can then apply the classical It\^o formula to the approximating system of independent copies of $x$ and take the limit. 
This is done in~\cite[Theorem 3.5]{chassagneux2014classical}.  

Proposition~\ref{propn ito for meas only} can be used to derive an It\^o formula for a function
which depends on $(t,x,\mu)$. 

\begin{definition}
\label{def c122}
By $\mathcal C^{1,2,(1,1)}([0,\infty)\times D \times \mathcal P_2(D))$ 
we denote the functions $v=v(t,x,\mu)$ such 
that $v(\cdot,\cdot,\mu) \in C^{1,2}([0,\infty)\times D)$ for each $\mu$,
and such that $v(t,x,\cdot)$ is in $\mathcal C^{(1,1)}(\mathcal P_2(D))$ for each $(t,x)$.
Moreover all the resulting (partial) derivatives must be jointly continuous 
in $(t,x,\mu)$ or $(t,x,\mu,y)$ as appropriate.	

Finally, by $\mathcal C^{2,(1,1)}(D \times \mathcal P_2(D))$ we denote the
subspace of $\mathcal C^{1,2,(1,1)}([0,\infty)\times D \times \mathcal P_2(D))$ of functions $v$ that are constant in $t$.
\end{definition}

To conveniently express integrals with respect to the laws of the process taken \textit{only}
over the ``new'' variables arising in the measure derivative we introduce
another probability space
$(\tilde \Omega, \tilde{\mathcal F}, \tilde{\mathbb P})$ a
filtration $(\tilde{\mathcal F}_t)_{t\geq 0}$
and processes $\tilde w$, $\tilde b$, $\tilde \sigma$ and a random 
variable $\tilde x_0$  on this probability space
such that they have the same laws as  $w$, $b$, $\sigma$ and $x_0$.
We assume $\tilde w$ is a Wiener process. 
Then 
\[
d\tilde x_t = \tilde b_t \, dt + \tilde \sigma_t d\tilde w_t, \,\,\,\tilde x_0 \in L^2(\tilde{\mathcal F}_0)
\]
is another It\^o process which satisfies $\tilde x_t \in D_k$ for all $t$ a.s.
Moreover, if we now consider the probability space 
$(\Omega\times \tilde \Omega, \mathcal F \otimes \tilde{\mathcal F}, 
\mathbb P \otimes \tilde{\mathbb P})$ 
then we see that the processes with and without tilde are independent on this 
new space.

\begin{proposition}[It\^o formula]
\label{propn ito formula full}
Assume that 
\[
\mathbb E \int_0^\infty |b_t|^2 + |\sigma_t|^4 \, dt < \infty\,.
\]	
Let $v \in \mathcal C^{1,2,(1,1)}([0,\infty)\times D \times \mathcal P_2(D))$ such that for any compact subset $\mathcal K \subset \mathcal P_2(D)$ 
\begin{equation}
\label{eq ito integrability condition tx}
\sup_{t,x,\mu \in [0,\infty)\times D\times\mathcal K} \int_{D} \left[|\partial_\mu v(t,x,\mu)(y)|^2 
+ |\partial_y \partial_\mu v(t,x,\mu)(y)|^2 \right]\mu(dy) < \infty \,.	
\end{equation}
Then, for $\mu_t := \mathscr L(\tilde x_t)$,
\[
\begin{split}
v(t,x_t, \mu_t) -  v(0,x_0,\mu_0) = & \int_0^t \left[\partial_t v(s,x_s,\mu_s) + b_s\partial_x v(s,x_s,\mu_s) 
+ \frac{1}{2}\text{tr}\left[\sigma_s\sigma_s^* \partial_x^2 v(s,x_s,\mu_s)\right] \right]\,ds \\
& + \int_0^t \sigma_s \partial_x v(s,x_s,\mu_s)\, dw_s \\
& + \int_0^t \tilde{\mathbb E}\left[ \tilde b_s \partial_\mu v(s,x_s,\mu_s)(\tilde x_s) + \frac{1}{2}\text{tr}\left[\tilde{\sigma}_s\tilde{\sigma}_s^* \partial_y \partial_\mu v(s,x_s,\mu_s)(\tilde x_s)\right]\right]\,ds\,.
\end{split}
\]
\end{proposition}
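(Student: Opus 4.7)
The plan is to combine the pure-measure It\^o formula of Proposition~\ref{propn ito for meas only} with the classical It\^o formula via a telescoping argument on a time partition. Fix $t\in I$ and a partition $0=s_0<s_1<\cdots<s_n=t$ whose mesh tends to zero, and decompose
\[
v(t,x_t,\mu_t)-v(0,x_0,\mu_0)=\sum_{i=0}^{n-1}(A_i+B_i),
\]
where
\[
A_i:=v(s_{i+1},x_{s_{i+1}},\mu_{s_{i+1}})-v(s_{i+1},x_{s_{i+1}},\mu_{s_i}),\quad B_i:=v(s_{i+1},x_{s_{i+1}},\mu_{s_i})-v(s_i,x_{s_i},\mu_{s_i}).
\]

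For each $B_i$ the measure $\mu_{s_i}$ is a fixed, deterministic element of $\mathcal P_2(D)$, so the map $(r,y)\mapsto v(r,y,\mu_{s_i})$ belongs to $C^{1,2}$. Applying the classical It\^o formula to this function and to the It\^o process $x$ on $[s_i,s_{i+1}]$ produces the familiar drift $\partial_t v + b\partial_x v+\tfrac12 \text{tr}(\sigma\sigma^*\partial_x^2 v)$ and the stochastic integrand $(\partial_x v)^*\sigma$, evaluated at $(s,x_s,\mu_{s_i})$. For each $A_i$ I first fix deterministic parameters $(r,y)$ and observe that $\mu\mapsto v(r,y,\mu)$ belongs to $\mathcal C^{(1,1)}(\mathcal P_2(D))$ and satisfies~\eqref{eq ito integrability condition tx} uniformly over the compact family $\{\mu_s:s\in[0,t]\}\subset \mathcal P_2(D)$. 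Proposition~\ref{propn ito for meas only} applied on $[s_i,s_{i+1}]$ then yields the \emph{deterministic} identity
\[
v(r,y,\mu_{s_{i+1}})-v(r,y,\mu_{s_i})=\int_{s_i}^{s_{i+1}}\tilde{\mathbb E}\bigl[\tilde b_s\partial_\mu v(r,y,\mu_s)(\tilde x_s)+\tfrac12\text{tr}(\tilde\sigma_s\tilde\sigma_s^*\partial_y\partial_\mu v(r,y,\mu_s)(\tilde x_s))\bigr]\,ds,
\]
valid for every $(r,y)$. Joint continuity of both sides in $(r,y)$, provided by Definition~\ref{def c122}, allows the pathwise substitution $(r,y)=(s_{i+1},x_{s_{i+1}}(\omega))$, producing an integral expression for $A_i$.

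The remaining step is the limit $n\to\infty$. The sum $\sum_i B_i$ converges via a standard Riemann-sum and dominated-convergence argument for the bounded-variation integrals, and by the It\^o isometry combined with dominated convergence for the stochastic integral; the replacement of $\mu_{s_i}$ by $\mu_s$ is justified by continuity of $s\mapsto\mu_s$ in $\mathcal P_2(D)$ and the joint continuity of $\partial_t v$, $\partial_x v$, $\partial_x^2 v$ in $(t,x,\mu)$. The sum $\sum_i A_i$ converges to $\int_0^t\tilde{\mathbb E}[\tilde b_s\partial_\mu v(s,x_s,\mu_s)(\tilde x_s)+\tfrac12\text{tr}(\tilde\sigma_s\tilde\sigma_s^*\partial_y\partial_\mu v(s,x_s,\mu_s)(\tilde x_s))]\,ds$ by joint continuity of the parametric expectation in $(r,y,\mu)$ together with dominated convergence.

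The main obstacle is this final uniform passage: one must control the parametric expectation $\tilde{\mathbb E}[\tilde b_s\partial_\mu v(r,y,\mu)(\tilde x_s)+\cdots]$ jointly in the frozen point $(r,y)$, which comes from the random pair $(s_{i+1},x_{s_{i+1}})$, and in the evolving measure $\mu=\mu_s$. Because $x$ remains inside some $D_k$ almost surely, all relevant evaluations of $\partial_\mu v$ and $\partial_y\partial_\mu v$ take place at points inside $D_k$, where the joint continuity of Definition~\ref{def c122} applies; combined with the uniform $L^2$-bound~\eqref{eq ito integrability condition tx}, this yields the required convergence and completes the proof.
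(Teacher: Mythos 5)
Your proposal is correct, but it takes a genuinely different route from the paper's. The paper avoids any partition argument: it applies Proposition~\ref{propn ito for meas only} once with $(\bar t,\bar x)$ frozen, observes that $t\mapsto v(\bar t,\bar x,\mu_t)$ is absolutely continuous with $\partial_t v(\bar t,\bar x,\mu_t)=M(\bar t,\bar x,\mu_t)$, and then packs the measure flow into a space--time function $\bar v(t,x):=v(t,x,\mu_t)$ satisfying $\partial_t \bar v(t,x)=\partial_t v(t,x,\mu_t)+M(t,x,\mu_t)$; a single application of the classical It\^o formula to $\bar v$ then delivers the conclusion (this is the argument from~\cite{buckdahn2017mean}). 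You instead telescope over a partition, splitting each increment into a ``frozen measure, moving $(t,x)$'' piece $B_i$, treated by the classical It\^o formula, and a ``frozen $(t,x)$, moving measure'' piece $A_i$, treated by Proposition~\ref{propn ito for meas only}, and then pass the mesh to zero. Both strategies are standard for mixed It\^o formulae. The paper's approach buys a shorter argument with the limit passage hidden in one absolute-continuity observation (at the cost of a regularity check that $\bar v\in C^{1,2}$, which the paper acknowledges is left as a limiting argument). Your approach makes the convergence explicit at the level of Riemann sums, which is more transparent but requires you to carry several uniform-continuity and dominated-convergence arguments, particularly for the $A_i$-sum where the random frozen point $(s_{i+1},x_{s_{i+1}}(\omega))$ has to be replaced by $(s,x_s(\omega))$ in the limit; you correctly identify that this works because the process is confined to a compact $D_k$ and the flow $\{\mu_s\}_{s\le t}$ is compact in $\mathcal P_2(D)$, so joint continuity upgrades to uniform continuity and the $L^2$ bound~\eqref{eq ito integrability condition tx} provides the dominating function. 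Your route is closer in spirit to the construction in~\cite{chassagneux2014classical}, while the paper's follows~\cite{buckdahn2017mean}; both are legitimate.
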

Here we follow the argument from~\cite{buckdahn2017mean} explaining how 
to go from an It\^o formula for function of measures only, i.e. from Proposition~\ref{propn ito for meas only}, to the general case. 
Note that it is possible to assume that $\tilde w$, $\tilde b$, $\tilde \sigma$ and $\tilde x_0$ have the same laws as $w$, $b$, $\sigma$
as $x_0$ above, but in fact this is not necessary.
In this paper this generality is needed in the proof of Lemma~\ref{lemma:uniform}.

\begin{proof}[Outline of proof for Proposition~\ref{propn ito formula full}]
Fix $(\bar t,\bar x)$ and apply Proposition~\ref{propn ito for meas only} to the function
$u(\mu):=v(\bar t,\bar x,\mu)$ and the law $\mu_t:=\mathscr L(\tilde x_t)$. 
Then
\[
\begin{split}
v(\bar t,\bar x,\mu_t) - v(\bar t,\bar x,\mu_0) & = 
\int_0^t \tilde{\mathbb E}\left[ \tilde b_s \partial_\mu v(\bar t,\bar x,\mu_s)(\tilde x_s) + \frac{1}{2}\text{tr}\left[\tilde{\sigma}_s\tilde{\sigma}_s^* \partial_y \partial_\mu v(\bar t,\bar x,\mu_s)(\tilde x_s)\right]\right]\,ds \\ 
& =: \int_0^t M(\bar t,\bar x, \mu_s)\,ds\,.
\end{split}
\]
We thus see that the map $t\mapsto v(\bar t, \bar x, \mu_t)$ is absolutely
continuous for all $(\bar t, \bar x)$ and so for almost all $t$ we have $\partial_t v(\bar t, \bar x, \mu_t) = M(\bar t, \bar x, \mu_t)$. 
Note that for completeness we would need to use the definition of $\mathcal C^{1,2,(1,1)}$ functions and a limiting argument to get the partial derivative for all $t$.
See the proof of the corresponding It\^o formula in~\cite{chassagneux2014classical}.
We now consider $\bar v$ given by 
$\bar v(t, x) := v(t,x,\mu_t)$.
Then $\partial_t \bar v(t,x) = (\partial_t v)(t,x,\mu_t) + M(t,x,\mu_t)$.
Using the usual It\^o formula we then have
\[
\begin{split}
\bar v(t,x_t) - \bar v(0,x_0) = & \int_0^t \left[\partial_t v(s,x_s,\mu_s) 
+ M(s,x_s,\mu_s)
+ \frac{1}{2}\text{tr}\left[\sigma_t\sigma_t^* \partial_x^2 v(s,x_s,\mu_s)\right] \right]\,ds \\
& + \int_0^t b_s \partial_x v(s,x_s,\mu_s)\, dw_s\,.
\end{split}
\] 
\end{proof}

\bibliographystyle{imsart-nameyear}
\bibliography{Particles}  

\begin{thebibliography}{37}

\bibitem[\protect\citeauthoryear{Ambrosio, Gigli and
  Savar\'e}{2008}]{ambrosio2008}
\begin{bbook}[author]
\bauthor{\bsnm{Ambrosio},~\bfnm{Luigi}\binits{L.}},
  \bauthor{\bsnm{Gigli},~\bfnm{Nicola}\binits{N.}} \AND
  \bauthor{\bsnm{Savar\'e},~\bfnm{Guiseppe}\binits{G.}}
(\byear{2008}).
\btitle{Gradient Flows in Metric Spaces and in the Space of Probability
  Measures}.
\bpublisher{Birkhauser}.
\end{bbook}
\endbibitem

\bibitem[\protect\citeauthoryear{Billingsley}{1999}]{billingsley}
\begin{bbook}[author]
\bauthor{\bsnm{Billingsley},~\bfnm{Patrick}\binits{P.}}
(\byear{1999}).
\btitle{Convergence of probability measures}.
\bpublisher{John Wiley \& Sons}.
\end{bbook}
\endbibitem

\bibitem[\protect\citeauthoryear{Bogachev, R{\"o}ckner and
  Shaposhnikov}{2016}]{bogachev2016distances}
\begin{barticle}[author]
\bauthor{\bsnm{Bogachev},~\bfnm{V.~I}\binits{V.~I.}},
  \bauthor{\bsnm{R{\"o}ckner},~\bfnm{M}\binits{M.}} \AND
  \bauthor{\bsnm{Shaposhnikov},~\bfnm{S.~V}\binits{S.~V.}}
(\byear{2016}).
\btitle{Distances between transition probabilities of diffusions and
  applications to nonlinear {F}okker--{P}lanck--{K}olmogorov equations}.
\bjournal{Journal of Functional Analysis}
\bvolume{271}
\bpages{1262--1300}.
\end{barticle}
\endbibitem

\bibitem[\protect\citeauthoryear{Bogachev, Röckner and
  Shaposhnikov}{2019}]{bogachevRocknerShaposhnikovStationary2019}
\begin{barticle}[author]
\bauthor{\bsnm{Bogachev},~\bfnm{Vladimir~I.}\binits{V.~I.}},
  \bauthor{\bsnm{Röckner},~\bfnm{Michael}\binits{M.}} \AND
  \bauthor{\bsnm{Shaposhnikov},~\bfnm{Stanislav~V.}\binits{S.~V.}}
(\byear{2019}).
\btitle{Convergence in variation of solutions of nonlinear
  Fokker–Planck–Kolmogorov equations to stationary measures}.
\bjournal{Journal of Functional Analysis}
\bvolume{276}
\bpages{3681 - 3713}.
\bdoi{https://doi.org/10.1016/j.jfa.2019.03.014}
\end{barticle}
\endbibitem

\bibitem[\protect\citeauthoryear{Bogachev
  et~al.}{2015}]{BogachevKrylovRocknerShaposhnikov}
\begin{bbook}[author]
\bauthor{\bsnm{Bogachev},~\bfnm{V.~I.}\binits{V.~I.}},
  \bauthor{\bsnm{Krylov},~\bfnm{N.~V.}\binits{N.~V.}},
  \bauthor{\bsnm{R\"ockner},~\bfnm{M.}\binits{M.}} \AND
  \bauthor{\bsnm{Shaposhnikov},~\bfnm{S.~V.}\binits{S.~V.}}
(\byear{2015}).
\btitle{{F}okker--{P}lanck--{K}olmogorov Equations}.
\bpublisher{AMS}.
\end{bbook}
\endbibitem

\bibitem[\protect\citeauthoryear{Bolley, Ca\~nizo and
  Carrillo}{2011}]{MR2860672}
\begin{barticle}[author]
\bauthor{\bsnm{Bolley},~\bfnm{Fran\c{c}ois}\binits{F.}},
  \bauthor{\bsnm{Ca\~nizo},~\bfnm{Jos{\'e}~A.}\binits{J.~A.}} \AND
  \bauthor{\bsnm{Carrillo},~\bfnm{Jos{\'e}~A.}\binits{J.~A.}}
(\byear{2011}).
\btitle{Stochastic mean-field limit: non-{L}ipschitz forces and swarming}.
\bjournal{Math. Models Methods Appl. Sci.}
\bvolume{21}
\bpages{2179--2210}.
\bdoi{10.1142/S0218202511005702}
\bmrnumber{2860672}
\end{barticle}
\endbibitem

\bibitem[\protect\citeauthoryear{Bolley, Guillin and
  Villani}{2007}]{Bolley2007}
\begin{barticle}[author]
\bauthor{\bsnm{Bolley},~\bfnm{Fran\c{c}ois}\binits{F.}},
  \bauthor{\bsnm{Guillin},~\bfnm{Arnaud}\binits{A.}} \AND
  \bauthor{\bsnm{Villani},~\bfnm{C{\'e}dric}\binits{C.}}
(\byear{2007}).
\btitle{Quantitative concentration inequalities for empirical measures on
  non-compact spaces}.
\bjournal{Probability Theory and Related Fields}
\bvolume{137}
\bpages{541--593}.
\bmrnumber{2280433}
\end{barticle}
\endbibitem

\bibitem[\protect\citeauthoryear{Bossy, Jabir and
  Talay}{2011}]{bossy2011conditional}
\begin{barticle}[author]
\bauthor{\bsnm{Bossy},~\bfnm{Mireille}\binits{M.}},
  \bauthor{\bsnm{Jabir},~\bfnm{Jean-Fran{\c{c}}ois}\binits{J.-F.}} \AND
  \bauthor{\bsnm{Talay},~\bfnm{Denis}\binits{D.}}
(\byear{2011}).
\btitle{{On conditional McKean Lagrangian stochastic models}}.
\bjournal{Probability theory and related fields}
\bvolume{151}
\bpages{319--351}.
\end{barticle}
\endbibitem

\bibitem[\protect\citeauthoryear{Buckdahn et~al.}{2017}]{buckdahn2017mean}
\begin{barticle}[author]
\bauthor{\bsnm{Buckdahn},~\bfnm{Rainer}\binits{R.}},
  \bauthor{\bsnm{Li},~\bfnm{Juan}\binits{J.}},
  \bauthor{\bsnm{Peng},~\bfnm{Shige}\binits{S.}} \AND
  \bauthor{\bsnm{Rainer},~\bfnm{Catherine}\binits{C.}}
(\byear{2017}).
\btitle{Mean-field stochastic differential equations and associated {PDE}s}.
\bjournal{The Annals of Probability}
\bvolume{45}
\bpages{824--878}.
\end{barticle}
\endbibitem

\bibitem[\protect\citeauthoryear{Cardaliaguet}{2013}]{cardaliaguet2012}
\begin{bbooklet}[author]
\bauthor{\bsnm{Cardaliaguet},~\bfnm{Pierre}\binits{P.}}
(\byear{2013}).
\btitle{Notes on Mean Field Games (from {P}.-{L}. {L}ions' lectures at
  {C}oll\`ege de {F}rance)}.
\bhowpublished{Online at
  \url{https://www.ceremade.dauphine.fr/~cardaliaguet/MFG20130420.pdf}}.
\end{bbooklet}
\endbibitem

\bibitem[\protect\citeauthoryear{Carmona and
  Delarue}{2017}]{carmona2017probabilistic}
\begin{bbook}[author]
\bauthor{\bsnm{Carmona},~\bfnm{Rene}\binits{R.}} \AND
  \bauthor{\bsnm{Delarue},~\bfnm{Fran{\c{c}}ois}\binits{F.}}
(\byear{2017}).
\btitle{Probabilistic Theory of Mean Field Games with Applications I-II}.
\bpublisher{Springer}.
\end{bbook}
\endbibitem

\bibitem[\protect\citeauthoryear{Chassagneux, Crisan and
  Delarue}{2014}]{chassagneux2014classical}
\begin{barticle}[author]
\bauthor{\bsnm{Chassagneux},~\bfnm{Jean-Fran{\c{c}}ois}\binits{J.-F.}},
  \bauthor{\bsnm{Crisan},~\bfnm{Dan}\binits{D.}} \AND
  \bauthor{\bsnm{Delarue},~\bfnm{Fran{\c{c}}ois}\binits{F.}}
(\byear{2014}).
\btitle{Classical solutions to the master equation for large population
  equilibria}.
\bjournal{arXiv:1411.3009}.
\end{barticle}
\endbibitem

\bibitem[\protect\citeauthoryear{Da~Prato}{2006}]{DaPrato}
\begin{bbook}[author]
\bauthor{\bsnm{Da~Prato},~\bfnm{Giuseppe}\binits{G.}}
(\byear{2006}).
\btitle{An Introduction to Infinite-Dimensional Analysis}.
\bseries{Universitext}.
\bpublisher{Springer-Verlag Berlin Heidelberg}.
\end{bbook}
\endbibitem

\bibitem[\protect\citeauthoryear{de~Raynal}{2020}]{de2015strong}
\begin{barticle}[author]
\bauthor{\bparticle{de}
  \bsnm{Raynal},~\bfnm{Paul-Eric~Chaudru}\binits{P.-E.~C.}}
(\byear{2020}).
\btitle{Strong well-posedness of {M}c{K}ean-{V}lasov stochastic differential
  equation with H{\"o}lder drift}.
\bjournal{Stochastic Processes and Their Applications}
\bvolume{130}
\bpages{79--107}.
\end{barticle}
\endbibitem

\bibitem[\protect\citeauthoryear{dos Reis, Salkeld and
  Tugaut}{2019}]{DosReis2017LDP}
\begin{barticle}[author]
\bauthor{\bparticle{dos} \bsnm{Reis},~\bfnm{G.}\binits{G.}},
  \bauthor{\bsnm{Salkeld},~\bfnm{W.}\binits{W.}} \AND
  \bauthor{\bsnm{Tugaut},~\bfnm{J.}\binits{J.}}
(\byear{2019}).
\btitle{Freidlin--{W}entzell {LDP}s in path space for {M}c{K}ean--{V}lasov
  equations and the functional iterated logarithm law}.
\bjournal{Annals of Applied Probability}
\bvolume{29}
\bpages{1487--1540}.
\end{barticle}
\endbibitem

\bibitem[\protect\citeauthoryear{Dudley}{2002}]{dudleybook}
\begin{bbook}[author]
\bauthor{\bsnm{Dudley},~\bfnm{R.~M.}\binits{R.~M.}}
(\byear{2002}).
\btitle{Real Analysis and Probability}.
\bpublisher{Cambridge Univ Press}.
\end{bbook}
\endbibitem

\bibitem[\protect\citeauthoryear{Fournier and
  Jourdain}{2017}]{FournierJourdain17}
\begin{barticle}[author]
\bauthor{\bsnm{Fournier},~\bfnm{Nicolas}\binits{N.}} \AND
  \bauthor{\bsnm{Jourdain},~\bfnm{Benjamin}\binits{B.}}
(\byear{2017}).
\btitle{Stochastic particle approximation of the {K}eller--{S}egel equation and
  two-dimensional generalization of {B}essel processes}.
\bjournal{Ann. Appl. Probab.}
\bvolume{27}
\bpages{2807--2861}.
\bmrnumber{3719947}
\end{barticle}
\endbibitem

\bibitem[\protect\citeauthoryear{Funaki}{1984}]{funaki1984certain}
\begin{barticle}[author]
\bauthor{\bsnm{Funaki},~\bfnm{Tadahisa}\binits{T.}}
(\byear{1984}).
\btitle{A certain class of diffusion processes associated with nonlinear
  parabolic equations}.
\bjournal{Probability Theory and Related Fields}
\bvolume{67}
\bpages{331--348}.
\end{barticle}
\endbibitem

\bibitem[\protect\citeauthoryear{Gy{\"o}ngy and
  Krylov}{1996}]{gyongy1996existence}
\begin{barticle}[author]
\bauthor{\bsnm{Gy{\"o}ngy},~\bfnm{Istv{\'a}n}\binits{I.}} \AND
  \bauthor{\bsnm{Krylov},~\bfnm{Nicolai}\binits{N.}}
(\byear{1996}).
\btitle{Existence of strong solutions for {I}t{\^o}'s stochastic equations via
  approximations}.
\bjournal{Probability {T}heory and {R}elated {F}ields}
\bvolume{105}
\bpages{143--158}.
\end{barticle}
\endbibitem

\bibitem[\protect\citeauthoryear{Gärtner}{1988}]{Gartner}
\begin{barticle}[author]
\bauthor{\bsnm{Gärtner},~\bfnm{Jürgen}\binits{J.}}
(\byear{1988}).
\btitle{On the McKean-Vlasov Limit for Interacting Diffusions}.
\bjournal{Mathematische Nachrichten}
\bvolume{137}
\bpages{197-248}.
\end{barticle}
\endbibitem

\bibitem[\protect\citeauthoryear{Hamlos}{1950}]{hamlos1950}
\begin{bbook}[author]
\bauthor{\bsnm{Hamlos},~\bfnm{Paul~Richard}\binits{P.~R.}}
(\byear{1950}).
\btitle{Measure Theory}.
\bpublisher{Van Nostrand}.
\end{bbook}
\endbibitem

\bibitem[\protect\citeauthoryear{Kac}{1956}]{Kac56}
\begin{binproceedings}[author]
\bauthor{\bsnm{Kac},~\bfnm{M.}\binits{M.}}
(\byear{1956}).
\btitle{Foundations of kinetic theory}.
In \bbooktitle{Proceedings of the {T}hird {B}erkeley {S}ymposium on
  {M}athematical {S}tatistics and {P}robability, 1954--1955, vol. {III}}
\bpages{171--197}.
\bpublisher{University of California Press, Berkeley and Los Angeles}.
\bmrnumber{0084985}
\end{binproceedings}
\endbibitem

\bibitem[\protect\citeauthoryear{Khasminskii}{1980}]{Khasminskii80}
\begin{bbook}[author]
\bauthor{\bsnm{Khasminskii},~\bfnm{Rafail}\binits{R.}}
(\byear{1980}).
\btitle{Stochastic stability of differential equations}.
\bseries{Monographs and Textbooks on Mechanics of Solids and Fluids: Mechanics
  and Analysis}
\bvolume{7}.
\bpublisher{Sijthoff \&\ Noordhoff, Alphen aan den Rijn---Germantown, Md.}
\bnote{Translated from the Russian by D. Louvish}.
\bmrnumber{600653}
\end{bbook}
\endbibitem

\bibitem[\protect\citeauthoryear{Li and Min}{2017a}]{MR3609379}
\begin{barticle}[author]
\bauthor{\bsnm{Li},~\bfnm{Juan}\binits{J.}} \AND
  \bauthor{\bsnm{Min},~\bfnm{Hui}\binits{H.}}
(\byear{2017}a).
\btitle{Weak solutions of mean-field stochastic differential equations}.
\bjournal{Stoch. Anal. Appl.}
\bvolume{35}
\bpages{542--568}.
\bdoi{10.1080/07362994.2017.1278706}
\bmrnumber{3609379}
\end{barticle}
\endbibitem

\bibitem[\protect\citeauthoryear{Li and Min}{2017b}]{Li2017}
\begin{barticle}[author]
\bauthor{\bsnm{Li},~\bfnm{Juan}\binits{J.}} \AND
  \bauthor{\bsnm{Min},~\bfnm{Hui}\binits{H.}}
(\byear{2017}b).
\btitle{Weak solutions of mean-field stochastic differential equations}.
\bjournal{Stochastic Analysis and Applications}
\bvolume{35}
\bpages{542-568}.
\bdoi{10.1080/07362994.2017.1278706}
\end{barticle}
\endbibitem

\bibitem[\protect\citeauthoryear{Lu\c{c}on and Stannat}{2014}]{MR3226169}
\begin{barticle}[author]
\bauthor{\bsnm{Lu\c{c}on},~\bfnm{Eric}\binits{E.}} \AND
  \bauthor{\bsnm{Stannat},~\bfnm{Wilhelm}\binits{W.}}
(\byear{2014}).
\btitle{Mean field limit for disordered diffusions with singular interactions}.
\bjournal{Ann. Appl. Probab.}
\bvolume{24}
\bpages{1946--1993}.
\bdoi{10.1214/13-AAP968}
\bmrnumber{3226169}
\end{barticle}
\endbibitem

\bibitem[\protect\citeauthoryear{McKean}{1966}]{McKean66}
\begin{barticle}[author]
\bauthor{\bsnm{McKean},~\bfnm{H.~P.}\binits{H.~P.} \bsuffix{Jr.}}
(\byear{1966}).
\btitle{A class of {M}arkov processes associated with nonlinear parabolic
  equations}.
\bjournal{Proc. Nat. Acad. Sci. U.S.A.}
\bvolume{56}
\bpages{1907--1911}.
\bmrnumber{0221595}
\end{barticle}
\endbibitem

\bibitem[\protect\citeauthoryear{M\'{e}l\'{e}ard}{1996}]{meleard1996asymptotic}
\begin{bincollection}[author]
\bauthor{\bsnm{M\'{e}l\'{e}ard},~\bfnm{Sylvie}\binits{S.}}
(\byear{1996}).
\btitle{{Asymptotic behaviour of some interacting particle systems;
  McKean-Vlasov and Boltzmann models}}.
In \bbooktitle{Probabilistic models for nonlinear partial differential
  equations}
\bpages{42--95}.
\bpublisher{Springer}.
\end{bincollection}
\endbibitem

\bibitem[\protect\citeauthoryear{Mishura and
  Veretennikov}{2016}]{mishura2016existence}
\begin{barticle}[author]
\bauthor{\bsnm{Mishura},~\bfnm{Yuliya~S}\binits{Y.~S.}} \AND
  \bauthor{\bsnm{Veretennikov},~\bfnm{Alexander~Yu}\binits{A.~Y.}}
(\byear{2016}).
\btitle{Existence and uniqueness theorems for solutions of {M}c{K}ean--{V}lasov
  stochastic equations}.
\bjournal{arXiv:1603.02212}.
\end{barticle}
\endbibitem

\bibitem[\protect\citeauthoryear{Scheutzow}{1987}]{Scheutzow87}
\begin{barticle}[author]
\bauthor{\bsnm{Scheutzow},~\bfnm{Michael}\binits{M.}}
(\byear{1987}).
\btitle{Uniqueness and nonuniqueness of solutions of {V}lasov--{M}c{K}ean
  equations}.
\bjournal{J. Austral. Math. Soc. Ser. A}
\bvolume{43}
\bpages{246--256}.
\bmrnumber{896631}
\end{barticle}
\endbibitem

\bibitem[\protect\citeauthoryear{Skorokhod}{1965}]{skorokhod1965}
\begin{bbook}[author]
\bauthor{\bsnm{Skorokhod},~\bfnm{A.~V.}\binits{A.~V.}}
(\byear{1965}).
\btitle{Studies in the theory of random processes}.
\bpublisher{Addison--Wesley, Reading, Mass.}
\end{bbook}
\endbibitem

\bibitem[\protect\citeauthoryear{Stroock and
  Varadhan}{2006}]{StroockVaradhan2006}
\begin{bbook}[author]
\bauthor{\bsnm{Stroock},~\bfnm{Daniel~W.}\binits{D.~W.}} \AND
  \bauthor{\bsnm{Varadhan},~\bfnm{S.~R.~Srinivasa}\binits{S.~R.~S.}}
(\byear{2006}).
\btitle{Multidimensional diffusion processes}.
\bseries{Classics in Mathematics}.
\bpublisher{Springer-Verlag, Berlin}
\bnote{Reprint of the 1997 edition}.
\bmrnumber{2190038}
\end{bbook}
\endbibitem

\bibitem[\protect\citeauthoryear{Sznitman}{1991}]{sznitman1991topics}
\begin{bbook}[author]
\bauthor{\bsnm{Sznitman},~\bfnm{Alain-Sol}\binits{A.-S.}}
(\byear{1991}).
\btitle{Topics in propagation of chaos}.
\bpublisher{Springer}.
\end{bbook}
\endbibitem

\bibitem[\protect\citeauthoryear{Szpruch and Zh\=ang}{2018}]{SzpruchZhang18}
\begin{barticle}[author]
\bauthor{\bsnm{Szpruch},~\bfnm{{\L}ukasz}\binits{{\L}.}} \AND
  \bauthor{\bsnm{Zh\=ang},~\bfnm{X{\i}l{\'\i}ng}\binits{X.}}
(\byear{2018}).
\btitle{{$V$}-integrability, asymptotic stability and comparison property of
  explicit numerical schemes for non-linear {SDE}s}.
\bjournal{Math. Comp.}
\bvolume{87}
\bpages{755--783}.
\bmrnumber{3739216}
\end{barticle}
\endbibitem

\bibitem[\protect\citeauthoryear{Vallender}{1972}]{vallender1972calculation}
\begin{barticle}[author]
\bauthor{\bsnm{Vallender},~\bfnm{S.~S.}\binits{S.~S.}}
(\byear{1972}).
\btitle{Calculation of the Wasserstein Distance Between Probability
  Distributions on the Line}.
\bjournal{Theory Probab. Appl.}
\bvolume{18}.
\end{barticle}
\endbibitem

\bibitem[\protect\citeauthoryear{Villani}{2009}]{villani2009}
\begin{bbook}[author]
\bauthor{\bsnm{Villani},~\bfnm{C\'edric}\binits{C.}}
(\byear{2009}).
\btitle{Optimal transport old and new}.
\bpublisher{Springer}.
\end{bbook}
\endbibitem

\bibitem[\protect\citeauthoryear{Wang}{2018}]{wangDDSDELandau2018}
\begin{barticle}[author]
\bauthor{\bsnm{Wang},~\bfnm{Feng-Yu}\binits{F.-Y.}}
(\byear{2018}).
\btitle{Distribution dependent SDEs for Landau type equations}.
\bjournal{Stochastic Processes and their Applications}
\bvolume{128}
\bpages{595 - 621}.
\bdoi{https://doi.org/10.1016/j.spa.2017.05.006}
\end{barticle}
\endbibitem

\end{thebibliography}
\end{document}